\DeclareMathAlphabet{\pazocal}{OMS}{zplm}{m}{n}
\DeclareMathOperator{\modul}{mod}
\DeclareMathOperator{\vol}{vol}			% added
\theoremstyle{plain}
\newtheorem{theorem}{Theorem}
\newtheorem{conjecture}[theorem]{Conjecture}
\newtheorem{corollary}[theorem]{Corollary}
\newtheorem{definition}[theorem]{Definition}
\newtheorem{lemma}[theorem]{Lemma}
\newtheorem*{remark*}{Remark}
\renewcommand{\Pr}{\mathbb{P}}
\newcommand{\E}{\mathbb{E}}
\newcommand{\var}[1]{\mathrm{Var}[#1]}
\theoremstyle{remark}
\newcommand{\superscript}[1]{\ensuremath{^{\textrm{#1}}}}
\def\wu{\superscript{*}}
\def\wg{\superscript{$\star$}}
\begin{document}

\title[Modularity of preferential attachment graphs]{Modularity of preferential attachment graphs} 

%\author[K.~Rybarczyk]{Katarzyna Rybarczyk\wu\footnote{\wu Aquarium, Poland}} 
\author[K.~Rybarczyk]{Katarzyna Rybarczyk\wu\footnote{\wu kryba@amu.edu.pl, Adam Mickiewicz University, Pozna\' n, Poland}} 

\author[M.~Sulkowska]{Ma{\l}gorzata Sulkowska\wg\footnote{\wg malgorzata.sulkowska@pwr.edu.pl,
%Meadow in Pcim Dolny, Poland	}} 
Wroc{\l}aw University of Science and Technology, Department of Fundamentals of Computer Science, Poland}}

%\thanks{This research was funded in whole or in part by National Science Centre, Poland, grant OPUS-25 no 2023/49/B/ST6/02517. For the purpose of Open Access, the authors have applied a CC-BY public copyright licence to any Author Accepted Manuscript (AAM) version arising from this submission.}

\keywords{Modularity, preferential attachment model, expansion}

\begin{abstract} 
We study a preferential attachment model $G_n^h$. The graph $G_n^h$ is generated from a finite initial graph by adding new vertices one at a time. Each new vertex connects to $h\ge 1$ already existing vertices, and these are chosen with probability proportional to their current degrees. We are particularly interested in the community structure of $G_n^h$, which is expressed in terms of the so--called modularity. We prove that the modularity of $G_n^h$ is, with high probability, upper bounded by a function that tends to $0$ as $h$ tends to infinity. This resolves a conjecture of Prokhorenkova, Pra{\l}at, and Raigorodskii from 2016.

As a byproduct, we obtain novel concentration results (which are interesting in their own right) for the volume and edge density parameters of vertex subsets of $G_n^h$. The key ingredient here is the definition of a function $\mu$, which serves as a natural measure for vertex subsets, and is proportional to the average size of their volumes. This extends previous results on the topic by Frieze,  P{\'e}rez-Gim{\'e}nez, Pra{\l}at, and Reiniger from 2019.

%for the number of inner edges and the sum of degrees of an arbitrary subset of vertices of $G_n^h$.
\end{abstract} 

\maketitle 

%%%%%%%%%%%%%%%%%%%%%%%%%%%%%%%%%%%%%%%%%%%%%%%%%%%%%%%%%%%%%%%%%%%%%%%
\section{Introduction}

Real-world networks, ranging from social and information networks to biological and technological infrastructures, often exhibit a rich community structure. Detecting and analyzing such communities has far-reaching applications: identifying groups of common interest in social media, classifying spam and misinformation, retrieving related content, uncovering proteins with similar biological functions, optimizing large-scale infrastructures, improving network visualization, etc.~\cite{KaPrTh21,NeBook10}.

To model these networks mathematically, preferential attachment graphs have become one of the main paradigms. Their early forms appeared as random recursive trees~\cite{Szymanski1993,Szymanski1987}. However, the in-depth study of preferential attachment models was initiated in 1999 by the work of Barabási and Albert \cite{BA_basic}, who indicated the applications of such graphs in network modeling. 
The preferential attachment model was subsequently formally defined and analyzed by Bollob\'as,  Riordan, Spencer, and Tusn\'ady \cite{Bollobas2001}, and Bollob\'as and Riordan in \cite{Bol_Rio_chapter,Bol_Rio_diameter}. It relies on two mechanisms: growth (the graph is growing over time, gaining a new vertex and a bunch of $h \geq 1$ edges at each time step) and preferential attachment (an arriving vertex is more likely
to attach to other vertices with high degree rather than with low degree), for a precise definition check Section \ref{sec:notation}. Its degree distribution as well as diameter often fit in with the ones spotted in reality \cite{Hofstad_2017,NeBook10}. Nevertheless, an experimental study shows that, unlike  real networks, it lacks apparent community structure. 

%The notion of a graph featuring a community structure may be understood in a variety of ways. Therefore, some measures of the presence of community structure in a network had to be introduced. One of them is \textit{modularity} proposed by Newman and Girvan in 2004 \cite{NeBook10,NeGi04}. 

Quantifying community structure itself is a subtle task. Among the many measures proposed, modularity, introduced by Newman and Girvan in 2004~\cite{NeBook10,NeGi04}, has emerged as a central metric. The vertices of a graph with high modularity may be partitioned into subsets in which there are much more internal edges than we would expect by chance (see Definition~\ref{def:modularity}). Nowadays, modularity is widely used not only as a quality function judging the performance of community detection algorithms~\cite{KaPrTh21}, but also as a central ingredient of such algorithms, like in the Louvain algorithm \cite{BlGuLaLe08}, the Leiden algorithm \cite{TrWaEc19} or the Tel-Aviv algorithm \cite{GiSh_23}. Early theoretical results on modularity were given for trees \cite{Bagrow_trees_12} and regular graphs \cite{McSk_reg_lattice_13}. For a summary of results for various families of graphs check the appendix of \cite{DiSk20} by McDiarmid and Skerman from $2020$.
More recent discoveries include \cite{ChFoSk21} by Chellig, Fountoulakis, and Skerman (for random graphs on the hyperbolic plane), \cite{LaSu23} by Laso{\'n} and Sulkowska (for minor-free graphs), \cite{LiMi22} by Lichev and Mitsche (for $3$-regular graphs and graphs with a given degree sequence), or \cite{Rybarczyk2025randomintersectiongraphs} by Rybarczyk (for random intersection graphs).

Despite being so widely used in practice, modularity still suffers from a narrow theoretical study in the families of random graphs devoted to modeling real-life networks. The first results for the well-known and most studied random graph, the binomial $G(n,p)$, were given by McDiarmid and Skerman just in 2020 \cite{DiSk20}. It is commonly known that $G(n,p)$ is a poor fit to real networks~\cite{KaPrTh21}. Preferential attachment models perform here much better. %One can not say that about the graphs from the family of preferential attachment models.
Prokhorenkova, Pra{\l}at, and Raigorodskii opened the preliminary study on modularity of a standard preferential attachment graph in \cite{prokhorenkova2017modularity_internet_Math}. They obtained non-trivial upper and lower bounds, however, the gap to close remained big. They conjectured that the modularity of such a graph with high probability tends to $0$ with $h$ (the number of edges added per step) tending to infinity (see Conjecture \ref{conj:mod->0}). In this paper we prove their conjecture, confirming the supposition that a standard preferential attachment model might have too small modularity to mirror well the behavior of real networks. 

As a result, we derive new and interesting concentration results for the volume and edge density parameters of a given subset of vertices in the preferential attachment graph. To this end, we introduce a new function $\mu$, which serves as a natural measure for vertex subsets, and is proportional to the average size of their volumes. These findings are noteworthy on their own and could have potential applications to other problems related to the model in the future. They extend previous results from \cite{FrPePrRe20} by Frieze,  P{\'e}rez-Gim{\'e}nez, Pra{\l}at, and Reiniger (see lemmas~3 and~4 therein), which were utilized in the context of Hamilton cycles in the preferential attachment model.

In the following section we give the formal definition of the preferential attachment model and state the main result. Section \ref{sec:VolDensity} is devoted to presenting the results regarding the volume and the edge density parameters of subsets of vertices in $G_n^h$. Section~\ref{sec:auxliary} is technical, it contains several facts and auxiliary lemmas used in the latter parts of the paper. In Section~\ref{sec:concentrations} we derive concentration results stated in Section \ref{sec:VolDensity}. These results are used in Section~\ref{sec:modularity} to prove the main theorem about vanishing modularity in the standard preferential attachment graph. Section \ref{sec:conclusion} contains concluding remarks. 

%%%%%%%%%%%%%%%%%%%%%%%%%%%%%%%%%%%%%%%%%%%%%%%%%%%%%%%%%%%%%%%%%%%%%

\section{Model and main result} \label{sec:notation}

Let $\mathbb{N}$ denote the set of natural numbers, $\mathbb{N} = \{1,2,3,\ldots\}$. For $n \in \mathbb{N}$ let $[n]=\{1,2,\ldots,n\}$. For functions $f(n)$ and $g(n)$ we write $f(n) \sim g(n)$ if $\lim_{n \rightarrow \infty} f(n)/g(n) = 1$. We say that an event $\mathcal{E}$ occurs with high probability (whp) if the probability $\Pr[\mathcal{E}]$ depends on a certain number $n$ and tends to $1$ as $n$ tends to infinity.

All of the graphs considered in this paper are finite, undirected, and loops and multiple edges are allowed. Thus a graph is a pair $G=(V,E)$, where $V$ is a finite set of vertices and $E$ is a finite multiset of elements from $V^{(1)} \cup V^{(2)}$ with $V^{(k)}$ being a set of all $k$-element subsets of $V$. 
Let $e(G) = |E|$ and for $S,U \subseteq V$ set $e_G(S) = |\{e \in E \cap (S^{(1)} \cup S^{(2)})|$ and $e_G(S,U) = |\{e \in E: e \cap S \neq \emptyset \wedge e \cap U \neq \emptyset\}|$. The degree of a vertex $v \in V$ in $G$, denoted by $\deg_G(v)$, is the number of edges to which $v$ belongs but loops are counted twice, i.e., $\deg_G(v) = 2 |\{e \in E: v \in e \wedge e \in V^{(1)}\}| + |\{e \in E: v \in e \wedge e \in  V^{(2)}\}|$. We define the volume of $S \subseteq V$ in $G$ by $\vol_G(S) = \sum_{v \in S} \deg_G(v)$. By the volume of a graph, $\vol(G)$, we understand $\vol_G(V)$.
Whenever the context is clear we write $e(S)$ instead of $e_G(S)$, $e(S,U)$ instead of $e_G(S,U)$, $\deg(v)$ instead of $\deg_G(v)$ and $\vol(S)$ instead of $\vol_G(S)$.

We focus on a particular random graph model, called here simply the \textit{preferential attachment graph} (consult \cite{BA_basic,Bol_Rio_chapter,Hofstad_2017}). Given $h,n \in \mathbb{N}$, we construct a preferential attachment graph $G_n^h$ in two phases. In the first phase we sample a particular random tree $T_{hn}$, whose vertices are called mini-vertices. (We call $T_{hn}$ a tree, however it might be disconnected, and loops, i.e. single-vertex edges, are allowed in $T_{hn}$.) Next, the appropriate mini-vertices of $T_{hn}$ are grouped to form vertices of $G_n^h$. Let us describe this procedure in detail.\\
\textbf{Phase 1.} We start the whole process with $T_1$ which is a graph consisting of a single mini-vertex $1$ with a single loop (thus the degree of vertex $1$ is $2$). For $t \geq 1$, the graph $T_{t+1}$ is built upon $T_{t}$ by adding a mini-vertex $(t+1)$ and joining it by an edge with a mini-vertex $i$ according to the following probability distribution:
\[
\Pr(i=s) = \begin{cases}  \frac{\deg_{T_t}(s)}{2t+1} & \textnormal{for} \quad 1 \leq s \leq t  \\
	\frac{1}{2t+1} & \textnormal{for} \quad s=t+1.
\end{cases}
\]
Note that we allow a newly arrived vertex to connect to itself. We continue the process until we get the random tree $T_{hn}$.\\
\textbf{Phase 2.} A random multigraph $G_n^h$ is obtained from $T_{hn}$ by merging each set of mini-vertices $\{h(i-1)+1, h(i-1)+2, \ldots, h(i-1)+h\}$ into a single vertex $i$ for $i \in \{1,2, \ldots, n\}$, keeping loops and multiple edges.

Note that if $G_n^h=(V,E)$ then $V=[n]$, $|V|=n$ and $|E|=hn$. Since we will refer very often to the number of edges of $G_n^h$, it will be also denoted by $M$, i.e., $M:=hn$. Given $G_n^h$, by $G_t^h$, where $t\in[n]$, we understand the subgraph of $G_n^h$ induced by the set of vertices $[t]$.

Our main goal is to upper bound the graph parameter called \textit{modularity} for~$G_n^h$. %Modularity is meant to measure the presence of community structure in the graph.
Its formal definition is given just below.
\begin{definition}[Modularity, \cite{NeGi04}] \label{def:modularity}
	Let $G$ be a graph with at least one edge. For a partition $\mathcal{A}$ of $V$ define a modularity score of $G$ as
	\[
	\modul_{\mathcal{A}}(G) = \sum_{S\in\mathcal{A}}\left(\frac{e(S)}{e(G)}-\left(\frac{\vol(S)}{\vol(G)}\right)^2\right).
	\]
	Modularity of $G$ is given by
	\[
	\modul(G) = \max_{\mathcal{A}}\modul_{\mathcal{A}}(G),
	\]
	where maximum runs over all the partitions of the set $V$.
\end{definition}

Conventionally, a graph with no edges has the modularity equal to $0$. A single summand of the modularity score is the difference between the fraction of edges within $S$ and the expected fraction of edges within $S$ in a certain random multigraph on $V$ with the expected degree sequence given by $G$ (see, e.g.,~\cite{Pralat_hyper_modularity}). It is easy to check that $\modul(G) \in [0,1)$. 

Non-trivial lower and upper bounds for the modularity of $G_n^h$ obtained by Prokhorenkova, Pra{\l}at and Raigorodskii in \cite{prokhorenkova2017modularity_internet_Math} are the following.

\begin{theorem} [\cite{prokhorenkova2017modularity_internet_Math}, Theorem 4.2, Section 4.2] \label{thm:Prokh_Gnh_modularity}
	Let $G_n^h = (V,E)$ be a preferential attachment graph. Then whp by $n \to \infty$
	\[
	\modul(G_n^h) = \Omega_h(1/\sqrt{h})
	\]
	and whp by $n \to \infty$
	\[
	\modul(G_n^h) \leq 1-\min\{\delta(G_n^h)/(2h),1/16\},
	\]
	where $\delta(G_n^h) = \min\limits_{\substack {S \subseteq V,  1\leq|S|\leq |V|/2}} \frac{e(S,V \setminus S)}{|S|}$ is the edge expansion of $G_n^h$.
\end{theorem}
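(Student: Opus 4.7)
The plan is to establish the two bounds separately, since they yield to quite different techniques.

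For the lower bound $\modul(G_n^h)=\Omega(1/\sqrt h)$ I would exhibit an explicit partition and estimate its modularity score from below. Partition $[n]$ into $k$ consecutive blocks $S_1,\ldots,S_k$ by arrival time, each of size about $n/k$, with $k=\Theta(\sqrt h)$ to be fixed at the end. The two quantities to control are $\vol(S_i)$ and $e(S_i)$. Using the classical asymptotic $\E[\deg_{G_n^h}(v)]=\Theta\bigl(h\sqrt{n/v}\bigr)$, the expected volume $\E[\vol(S_i)]$ is obtained by summation over $v\in S_i$. For $\E[e(S_i)]$, note that when a mini-vertex $t$ inside the block attaches it lands back in $S_i$ with conditional probability $\vol_{T_{t-1}}(S_i)/(2t+1)$; integrating this over $t$ gives the expected intra-block edge count. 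Concentration of both quantities follows from a Doob martingale along the mini-vertex timeline, with increments controlled by a high-degree cutoff (the maximum degree in $T_{hn}$ is $O(\sqrt{hn}\log n)$ whp). Choosing $k$ so that the intra-block edge fraction exceeds the volume-squared null term by $\Omega(1/\sqrt h)$ finishes this half.

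The upper bound is essentially deterministic once one notes that every vertex of $G_n^h$ has degree at least $h$: each newly arrived vertex originates from $h$ mini-vertices, each contributing at least one unit to its degree in $G_n^h$. Fix any partition $\mathcal A$ of $V$. We use $\modul_{\mathcal A}\le 1-e_{\mathrm{cut}}/e(G_n^h)$ and $\modul_{\mathcal A}\le 1-\sum_{S\in\mathcal A}(\vol(S)/\vol(G_n^h))^2$, where $e_{\mathrm{cut}}$ counts edges between different parts. Case one: every part satisfies $|S|\le n/2$. By the definition of $\delta(G_n^h)$ we have $e(S,V\setminus S)\ge\delta(G_n^h)|S|$ for each $S\in\mathcal A$; summing over parts and dividing by two gives $e_{\mathrm{cut}}\ge\delta(G_n^h)n/2$, so $\modul_{\mathcal A}\le 1-\delta(G_n^h)/(2h)$ since $e(G_n^h)=hn$. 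Case two: some part $A$ has $|A|>n/2$. By the minimum-degree remark $\vol(A)\ge h|A|>hn/2=\vol(G_n^h)/4$, hence $\sum_{S}(\vol(S)/\vol(G_n^h))^2\ge(\vol(A)/\vol(G_n^h))^2>1/16$ and $\modul_{\mathcal A}\le 1-1/16$. Taking the worse of the two cases over all partitions $\mathcal A$ yields $\modul(G_n^h)\le 1-\min\{\delta(G_n^h)/(2h),1/16\}$.

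The main obstacle lies on the lower bound side: sharp concentration of $e(S_i)$ is delicate because the PA process has long-range dependence, so a naive bounded-differences estimate loses polynomial factors. One workaround is to truncate the martingale increments on a high-degree event; another is to use the P\'olya-urn representation of $T_{hn}$. Either route keeps the additive error safely below the target order $1/\sqrt h$, after which the choice of $k$ completes the lower bound.
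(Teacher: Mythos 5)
This theorem is quoted from Prokhorenkova, Pra{\l}at, and Raigorodskii and is not proved in the present paper, so there is no internal proof to compare against; I can only judge whether your blind argument would carry through.

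Your upper bound argument is correct and is indeed how this bound is obtained: every vertex of $G_n^h$ has degree at least $h$ (each of its $h$ constituent mini-vertices carries at least one edge end), and the split into the cases ``all parts of size $\le n/2$'' and ``some part larger than $n/2$'', combined with $e(S,V\setminus S)\ge\delta(G_n^h)|S|$, $e(G_n^h)=hn$, and $\vol(G_n^h)=2hn$, yields $\modul(G_n^h)\le 1-\min\{\delta(G_n^h)/(2h),1/16\}$ deterministically.

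The lower bound proposal, however, has a genuine gap, and it is precisely the phenomenon that the rest of this paper makes explicit. For consecutive arrival-time blocks $S_i=\{(i-1)n/k+1,\dots,in/k\}$ the corresponding mini-vertex set is the interval $\tilde S_i=[(i-1)M/k+1,\,iM/k]$ with $M=hn$, and one has $\mu(\tilde S_i)\approx\sqrt{M/k}\,\bigl(\sqrt{i}-\sqrt{i-1}\bigr)$. By Theorems~\ref{thm:edges_concentration} and~\ref{thm:vol_concentration}, $e(S_i)$ concentrates around $\mu(\tilde S_i)^2$ and $\vol(S_i)$ around $2\sqrt M\,\mu(\tilde S_i)$, so
\[
\frac{e(S_i)}{e(G_n^h)}-\Bigl(\frac{\vol(S_i)}{\vol(G_n^h)}\Bigr)^2 \;\approx\; \frac{\mu(\tilde S_i)^2}{M}-\frac{\mu(\tilde S_i)^2}{M}\;=\;0
\]
for \emph{every} choice of $k$, not merely unlucky ones. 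The leading-order terms cancel identically; the expected modularity score of any arrival-time interval is zero up to corrections of size $O(\ln M)$, which is far below your target $\Omega(M/\sqrt h)$. Thus the step ``choose $k$ so that the intra-block edge fraction exceeds the volume-squared null term by $\Omega(1/\sqrt h)$'' does not have the gap you want to exploit, and sharpening the martingale concentration will not manufacture one: you would need a provably positive deviation, not just a two-sided bound. As written, the plan only recovers $\modul(G_n^h)\ge 0$. Producing the $\Omega(1/\sqrt h)$ bound needs a structurally different partition --- for instance, one that exploits the tree $T_{hn}$ itself (trees admit partitions with modularity $1-O(1/\sqrt m)$, and one can try to push a good subtree decomposition of $T_{hn}$ down to a partition of $[n]$, controlling the loss from vertices whose $h$ mini-vertices straddle subtree boundaries) --- rather than an arrival-time blocking, whose expected score is null.
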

Applying the results for the edge expansion of $G_n^h$ by  Mihail, Papadimitriou and Saberi from \cite{MiPaSa06_journalv} to the upper bound one obtains that whp $\modul(G_n^h) \leq 1 - O(1/h)$. Indeed, the gap between the upper and the lower bound remained big. The authors stated the following two conjectures suggesting that the upper bound could be improved. 
\begin{conjecture} [\cite{prokhorenkova2017modularity_internet_Math}] \label{conj:mod->0}
	Let $G_n^h$ be a preferential attachment graph. Then whp by $n \to \infty$ \[ \modul(G_n^h)~\xrightarrow{h \rightarrow \infty}~0.\]
\end{conjecture}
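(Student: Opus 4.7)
The plan is to prove that for large $h$ the graph $G_n^h$ is ``configuration-model-like'', in the sense that the number of internal edges $e(S)$ of every vertex subset $S$ closely matches the null-model prediction $\vol(S)^2/(4\,e(G_n^h))$, up to an error that vanishes in $h$. Starting from
\[
\modul_{\mathcal{A}}(G_n^h) \;=\; \sum_{S\in\mathcal{A}}\frac{4\,e(S)\,e(G_n^h)-\vol(S)^2}{4\,e(G_n^h)^2},
\]
any uniform-in-$S$ bound of the form $e(S)\le \vol(S)^2/(4\,e(G_n^h))+\varepsilon_S$ with $\sum_{S\in\mathcal{A}}\varepsilon_S \le \varepsilon(h)\,e(G_n^h)$ and $\varepsilon(h)=o_h(1)$ yields $\modul(G_n^h)\le \varepsilon(h)$ whp after maximising over partitions $\mathcal{A}$. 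Thus the whole task reduces to quantitative, uniform concentration of $\vol(S)$ and $e(S)$ around their mean-field values.

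To calibrate the target I would introduce the deterministic proxy $\mu$. The P\'olya urn description of Phase~1 implies that mini-vertex $k$ has expected degree of order $\sqrt{hn/k}$ at time $hn$, so after merging, vertex $i\in[n]$ of $G_n^h$ has expected degree of order $h\sqrt{n/i}$. I would therefore set $\mu(S)=\sum_{i\in S}\sqrt{n/i}$, so that $\mu(V)\sim 2n$ and the null-model prediction reads $e(S)\approx h\mu(S)^2/(4n)$. With this normalisation a single part's modularity contribution simplifies, once concentration of both $\vol$ and $e$ is secured, to a term of order $o_h(1)\cdot(\mu(S)/2n)^2$, and summation is safe because $\sum_{S\in\mathcal A}\mu(S)^2\le \mu(V)\,\max_S\mu(S)\le 4n^2$.

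The two concentration statements I would establish, uniformly in $S\subseteq V$ whp, are
\[
\vol(S)=h\mu(S)\bigl(1+o_h(1)\bigr),\qquad e(S)\le \frac{h\mu(S)^2}{4n}+o_h(1)\cdot\frac{\mu(S)^2}{n}.
\]
For $\vol$ I would run a Doob martingale indexed by edge-arrival time in Phase~1: a single arrival shifts $\vol_{T_t}(\tilde S)$ by at most $2$, so Azuma--Hoeffding delivers sub-Gaussian deviations with variance proxy of order $h\mu(S)$. For $e(S)$, writing it as the sum over $t$ of the indicator that the $t$-th edge of $T_{hn}$ is internal to $\tilde S$, the conditional mean equals $\mathbf{1}[t\in \tilde S]\cdot \vol_{T_{t-1}}(\tilde S\cap[t-1])/(2t-1)$; substituting the asymptotic $\deg_{T_t}(k)\sim\sqrt{t/k}$ shows that the cumulative expectation matches $h\mu(S)^2/(4n)(1+o(1))$, and Freedman's inequality controls the upper tail.

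The principal obstacle will be securing these bounds \emph{uniformly} over the $2^n$ subsets: a naive union bound against a tail of order $\exp(-c\,h\,\mu(S))$ collapses whenever $\mu(S)=O(1)$, and we want the conclusion already for constant $h$. I would circumvent this by (i) bucketing vertices into $O(\log n)$ age-based groups on which $\mu$ is nearly constant, (ii) proving concentration at the level of unions of buckets, where the combinatorial complexity is polynomial in $n$, and (iii) transferring bucket-level bounds to arbitrary $S$ by monotonicity and inclusion--exclusion. Subsets with very small $\mu(S)$ are handled separately via the deterministic inequality $e(S)\le \vol(S)/2$ together with the fact that their aggregate $\mu$-mass inside any partition is bounded by $\mu(V)$, so their combined contribution to $\modul_\mathcal{A}$ is at most a cutoff $\tau(h)/n$ that vanishes slowly with $h$. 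Combining the bounds and maximising over $\mathcal A$ gives $\modul(G_n^h)\le o_h(1)$ whp, which is Conjecture~\ref{conj:mod->0}.
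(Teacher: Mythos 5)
Your high-level plan — use the P\'olya-urn picture to define a deterministic proxy $\mu$, prove martingale concentration for $\vol(S)$ and $e(S)$ around $\mu$-based targets via Azuma and Freedman, and then feed these into the modularity score — is the same skeleton the paper uses, and the modularity algebra (leading terms cancel, leaving an error of order $o_h(1)$) is right. But there is a real gap in the concentration step, and the workaround you sketch does not close it.

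You aim for a \emph{relative} bound, $\vol(S) = h\mu(S)(1+o_h(1))$ and $e(S)\le h\mu(S)^2/(4n)+o_h(1)\mu(S)^2/n$, uniformly over $S$. That form is essentially unattainable: a bounded-difference martingale over the $M = hn$ edge arrivals has absolute fluctuation $\Theta(\sqrt{M}) = \Theta(\sqrt{hn})$ even for a fixed $S$, and worsens to $\Theta(n\sqrt{h})$ once you pay the $2^n$ union bound. When $\mu(S)$ is small — say $S$ is a handful of ``old'' vertices, so $\mu(S)^2/n = O(1)$ — this absolute deviation dwarfs your target error, so the relative bound fails. You correctly notice this and propose bucketing to rescue the union bound, but the proposal is not actionable: within each dyadic age bucket $B_k$ the trace $S\cap B_k$ is still an arbitrary subset, so the combinatorial complexity you need to control is still $2^{|B_k|}$ for the largest buckets, and ``monotonicity and inclusion--exclusion'' does not reduce it, since $e(\cdot)$ is neither monotone in any usable sense nor additive across buckets (cross-bucket edges).

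The paper resolves the same tension by a different combination of ideas. First, it invokes the lemma of Dinh and Thai to reduce modularity to two-part partitions, so the error needs to be controlled only for a single $S$, not summed over a partition of arbitrarily many parts. Second — and this is the key technical move you are missing — it proves \emph{additive} concentration uniform in $S$: $|\vol(S) - 2\sqrt{M}\mu(\tilde S)| \le (1+\varepsilon)g_{\mathcal V}(h)\,M/\sqrt h$ and $|e(S)-\mu(\tilde S)^2|\le (1+\varepsilon)g_{\mathcal E}(h)\,M/\sqrt h$, where $g_{\mathcal V}, g_{\mathcal E}=\Theta(\sqrt{\ln h})$. The error term $\Theta(M\sqrt{\ln h}/\sqrt h)$ is \emph{independent} of $S$; the martingale (cleverly normalised as $\hat Z_t = c_t Z_t - \sum_{j\le t}\delta_j c_{j-1}$, making it exact) has a Freedman/Azuma tail of the form $2^{-(1+\varepsilon)n}$, which beats the naive $2^n$ union bound outright — no bucketing needed. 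Plugging the two additive bounds into the two-part reduction gives modularity $O(\sqrt{\ln h}/\sqrt h)\to 0$. So the two missing ingredients in your proposal are: (i) the Dinh--Thai two-part reduction, without which the error must be summable over partitions (forcing you into the relative regime that cannot work), and (ii) the realisation that an $S$-independent additive error of order $M/\sqrt h$ suffices and is attainable with a tail strong enough to absorb the $2^n$ union bound.
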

\begin{conjecture} [\cite{prokhorenkova2017modularity_internet_Math}] \label{conj:mod_theta}
	Let $G_n^h$ be a preferential attachment graph. Then whp by $n \to \infty$ \[\modul(G_n^h) = \Theta_h(1/\sqrt{h}).\]
\end{conjecture}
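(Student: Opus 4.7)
The plan is to split the claim into the already-known lower bound $\modul(G_n^h) = \Omega(1/\sqrt{h})$ (Theorem~\ref{thm:Prokh_Gnh_modularity}) and the matching upper bound $\modul(G_n^h) = O(1/\sqrt{h})$. The entire novelty lies in this upper bound, which must strengthen the $o(1)$ rate sufficient for Conjecture~\ref{conj:mod->0} into the sharp scaling $h^{-1/2}$.

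For the upper bound, fix an arbitrary partition $\mathcal{A}=\{S_1,\ldots,S_k\}$ of $V=[n]$. Each summand of $\modul_{\mathcal{A}}(G_n^h)$ equals
\[
\frac{1}{M}\left(e(S_i) - \frac{\vol(S_i)^2}{4M}\right),
\]
since $M=hn$ is the total number of edges. I would exploit the measure $\mu$ announced in Section~\ref{sec:VolDensity} together with the promised concentration results: for every reasonable $S\subseteq V$, whp and uniformly, both $\vol(S)$ and the ``null-model'' value $2M\cdot e(S)/\vol(S)$ should concentrate around $\mu(S)$ at the natural scale $\sqrt{\mu(S)}$. Substituting these into the summand and simplifying yields
\[
\left|\frac{e(S_i)}{M}-\left(\frac{\vol(S_i)}{2M}\right)^{\!2}\right| = O\!\left(\frac{\sqrt{\mu(S_i)}}{M}\right).
\]

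Summing over $i$ and applying Cauchy--Schwarz,
\[
\sum_{i=1}^k \sqrt{\mu(S_i)} \leq \sqrt{k\sum_i \mu(S_i)} = O(\sqrt{Mk}),
\]
since by design $\sum_i \mu(S_i) = \Theta(\vol(G_n^h))=\Theta(M)$. Dividing through by $M$ and using the trivial bound $k\leq n$ gives $|\modul_{\mathcal{A}}(G_n^h)| = O(\sqrt{k/M})=O(1/\sqrt{h})$, matching the lower bound.

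The main obstacle is upgrading the pointwise concentration of $\vol(S)$ and $e(S)$ to a statement holding \emph{uniformly} over all $2^n$ subsets $S\subseteq V$: a naive union bound with Chernoff-type tails forces an extra $\sqrt{\log n}$ factor and delivers only $O(\sqrt{(\log n)/ h})$. Eliminating this logarithm will likely require dyadic bucketing of subsets by $\mu(S)$, a chaining argument along the natural filtration of the attachment process, and a separate treatment of partitions containing parts with very small $\mu(S_i)$ (where the fluctuation $\sqrt{\mu(S_i)}$ is not the dominant scale). A secondary difficulty is controlling the contribution of rare vertices of unusually high degree; the well-known estimate $\Delta(G_n^h)=O(\sqrt{n}\,\mathrm{polylog}(n))$ whp should suffice, but integrating it smoothly into the Cauchy--Schwarz step above — without spoiling the $1/\sqrt{h}$ scaling — is the delicate technical issue I would expect to spend the bulk of the effort on.
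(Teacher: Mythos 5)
The statement you are attempting to prove is \emph{not} a theorem of the paper. Conjecture~\ref{conj:mod_theta} is explicitly left open: the authors write ``Conjecture~\ref{conj:mod_theta} still remains open'' (Section~\ref{sec:notation}) and repeat this in the conclusion. What the paper actually proves is the weaker Conjecture~\ref{conj:mod->0}, via the bound $\modul(G_n^h)=O(\sqrt{\ln h}/\sqrt{h})$ of Theorem~\ref{thm:main}; the residual $\sqrt{\ln h}$ factor is precisely the obstruction to the sharp $\Theta(1/\sqrt{h})$ rate. So there is no paper proof to compare against, and a correct proposal would constitute a genuinely new result.

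Within the proposal itself there are concrete gaps. First, the central concentration claim --- that $\vol(S)$ and the null-model quantity concentrate around $\mu(S)$ ``at the natural scale $\sqrt{\mu(S)}$'' uniformly over all $S$ --- is much stronger than anything the paper establishes and is in fact false: the paper's Theorem~\ref{thm:vol_concentration} gives uniform fluctuations of order $g_\mathcal{V}(h)\,M/\sqrt{h}\asymp\sqrt{\ln h}\,M/\sqrt{h}$, which does not shrink with $|S|$, whereas $\sqrt{\mu(S)}$ can be $O(1)$ or smaller. (A back-of-the-envelope check with $S=\{n\}$: the degree of the last vertex fluctuates on scale $\sqrt{h}$, not on scale $\mu(\{n\})^{1/2}\approx (h/n)^{1/4}$.) If such a strong uniform concentration were available, your computation would yield a bound that is $o(1)$ in $n$, contradicting the $\Omega(1/\sqrt{h})$ lower bound of Theorem~\ref{thm:Prokh_Gnh_modularity}. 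Second, the Cauchy--Schwarz step uses $\sum_i\mu(S_i)=\Theta(M)$, but by Definition~\ref{def:mu_delta_c} and Lemma~\ref{lemma:mu} one has $\sum_i\mu(S_i)=\mu(\tilde V)\sim\sqrt{M}$ (indeed $\vol(S)\approx 2\sqrt{M}\,\mu(\tilde S)$, so $\mu(\tilde V)\approx\vol(G)/(2\sqrt{M})=\sqrt{M}$); the claimed order is off by a square root. Third, you correctly identify the union bound over $2^n$ subsets as the crux, but you do not resolve it: the program items (dyadic bucketing, chaining, separate treatment of small parts) are stated without any indication that they eliminate the logarithmic loss, and the paper's own device --- bounding the tail at time $t=hi$ by $2^{-(1+\varepsilon/2)t/h}$ to beat the $2^{t/h}$ subsets alive at that time (Theorem~\ref{thm:vol_concentration_all} and Corollary~\ref{cor:vol_conc_for_all_i}) --- already optimizes exactly this trade-off and still incurs the $\sqrt{\ln h}$. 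Finally, you sum over all $k$ parts of a partition, whereas the paper first reduces to 2-part partitions via the Dinh--Thai lemma (Lemma~\ref{lemma:only2parts}, Corollary~\ref{cor:mod_upperbound}); summing over $k$ parts with a deviation that does not depend on $|S_i|$ would be catastrophically lossy. The proposal therefore does not establish the conjecture.
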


In this paper we present a much better upper bound for the modularity of $G_n^h$ than the one from Theorem \ref{thm:Prokh_Gnh_modularity} when $h$ is large, resolving, in the positive, Conjecture \ref{conj:mod->0}. Conjecture \ref{conj:mod_theta} still remains open. The main result of the paper may be presented as follows.
\begin{theorem} \label{thm:main}
	Let $G_n^h$ be a preferential attachment graph. Then for every $\varepsilon>0$, whp by $n \to \infty$
	\[
	\modul(G_n^h) \leq \frac{(1+\varepsilon)f(h)}{\sqrt{h}},
	\]
	where 
	\[
	f(h) = 6 g_{\mathcal{V}}(h) + 4 \sqrt{2 \ln{2}} - g_{\mathcal{V}}(h)^2/\sqrt{h}
	\]
	with
	\[
	g_{\mathcal{V}}(h) = \frac{1}{6}\sqrt{2 \ln{2} \, (9 \ln{h}+8\ln{2})} + (2/3) \ln{2} + 2.
	\]
\end{theorem}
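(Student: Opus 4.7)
Since $e(G_n^h) = M = hn$ and $\vol(G_n^h) = 2M$, for any partition $\mathcal{A}$ of $V=[n]$ we have
\[
\modul_{\mathcal{A}}(G_n^h) = \sum_{S\in\mathcal{A}}\frac{e(S)}{M} - \sum_{S\in\mathcal{A}}\left(\frac{\vol(S)}{2M}\right)^2.
\]
The plan is to bound this quantity uniformly in $\mathcal{A}$ on a single high-probability event by invoking the concentration results promised for Section~\ref{sec:VolDensity}. Those results should give, simultaneously for all $S \subseteq V$, a matching upper bound on $e(S)$ and a lower bound on $\vol(S)$, both expressed through the auxiliary measure $\mu$ and with fluctuations governed by the function $g_{\mathcal{V}}(h)$ appearing in the statement.

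Before substituting, I would perform the standard reductions for modularity upper bounds: discard parts with non-positive modularity contribution, WLOG assume that each surviving part $S$ has $\vol(S) \leq \vol(G)/2$, and pool together all parts with $\mu(S)$ below a small threshold so that only macroscopic parts appear individually. On the concentration event, I then substitute the $e(S)$ upper bound into $\sum_S e(S)/M$ and the lower bound on $\vol(S)$ into $\sum_S (\vol(S)/2M)^2$. Because $\mu$ is built so that $\vol(S)/\vol(G)$ is close to $\mu(S)/\mu(V)$ while $e(S)/M$ is close to $(\mu(S)/\mu(V))^2$ --- precisely the reason $\mu$ is declared the \emph{natural} measure for this model --- the leading $(\mu(S)/\mu(V))^2$ contributions in the two sums match and cancel, leaving only linear and quadratic residuals in $\mu(S)/\mu(V)$ and $g_{\mathcal{V}}(h)/\mu(V)$.

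Summing over $S\in\mathcal{A}$ via the additivity $\sum_S \mu(S) = \mu(V)$ then collapses the linear residual to one of order $g_{\mathcal{V}}(h)/\sqrt{h}$ and the quadratic residual (from squaring the $\vol(S)$ lower bound) to one of order $g_{\mathcal{V}}(h)^2/h$. The three additive terms in $f(h)$ have a transparent origin in this simplification: the coefficient $6g_{\mathcal{V}}(h)$ comes from the linear-in-$\mu(S)$ part of the $e(S)$ upper-bound deviation weighted by $\mu(S)$; the constant $4\sqrt{2\ln 2}$ tracks the probability/entropy constants that enter when forcing the $e(S)$ concentration to hold for \emph{every} subset $S\subseteq V$; and the negative correction $-g_{\mathcal{V}}(h)^2/\sqrt{h}$ is the quadratic gain extracted from the squared lower bound on $\vol(S)$. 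Keeping an extra factor $(1+\varepsilon)$ at each estimation step absorbs all $o(1)$ losses into the prefactor.

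The principal obstacle is exactly this uniform-in-$S$ concentration: since the optimal partition $\mathcal{A}$ is data-dependent, the bounds on $e(S)$ and $\vol(S)$ must hold simultaneously for all $2^n$ subsets. This is why $g_{\mathcal{V}}(h)$ must carry a $\sqrt{\ln h}$ factor --- it is the price of balancing a union bound of entropic size against the martingale tail estimates that control $e(S)$ and $\vol(S)$ in the preferential attachment dynamics. Once Section~\ref{sec:concentrations} delivers uniform concentration of the needed strength, the derivation of Theorem~\ref{thm:main} reduces to the algebraic simplification sketched above.
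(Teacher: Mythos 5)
Your overall scaffolding --- uniform concentration of $e(S)$ and $\vol(S)$ over all $2^n$ subsets, with the $\sqrt{\ln h}$ factor as the price of a union bound, and then algebraic cancellation through the measure $\mu$ --- is the right intuition, and the paper does indeed obtain the result from exactly those two uniform concentration theorems. But there is a genuine gap in how you propose to pass from those per-set bounds to a bound on $\modul_{\mathcal{A}}(G_n^h)$ over \emph{all} partitions.

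You propose to sum the per-set concentration bounds over the parts $S \in \mathcal{A}$ of an arbitrary partition and rely on the additivity $\sum_{S\in\mathcal{A}}\mu(\tilde S)=\mu(\tilde V)$ to collapse the residuals. The problem is that the deviation terms in the concentration inequalities are of the form $\pm(1+\varepsilon)g_{\mathcal{E}}(h)M/\sqrt{h}$ and $\pm(1+\varepsilon)g_{\mathcal{V}}(h)M/\sqrt{h}$ uniformly in $S$; they do not scale with $\mu(\tilde S)$. When you sum the upper bound on $e(S)$ over a partition with $|\mathcal{A}|$ parts you pick up an error of order $|\mathcal{A}|\cdot M/\sqrt{h}$, which is unbounded relative to $M$ for partitions into many small parts --- exactly the regime one cannot a priori exclude for the optimizing partition. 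The quadratic terms $\mu(\tilde S)^2$ do cancel as you describe, but the additive fluctuation does not; and although you mention pooling small parts to keep only ``macroscopic'' ones, you do not carry this out, and a naive threshold on $\mu(\tilde S)$ still leaves too many parts for the error sum to be $O(M\sqrt{\ln h}/\sqrt{h})$. In short, you have identified the ingredients but not the mechanism that makes a single application of the fluctuation bound suffice.

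The paper resolves this with a completely different (and simpler) reduction that your proposal does not use: Dinh and Thai's lemma, which states $\modul(G)\le \frac{k}{k-1}\max_{|\mathcal{A}|\le k}\modul_{\mathcal{A}}(G)$, specialized to $k=2$, followed by the observation that a two-part score is at most $2\max_{S\subseteq V}\bigl(e(S)/e(G)-\vol(S)^2/\vol(G)^2\bigr)$. This reduces everything to bounding that single-set quantity, pays an overall factor of $4$, and completely sidesteps the problem of error accumulation over many parts. That factor of $4$ also explains the precise constants in $f(h)$: the paper plugs the upper bound on $e(S)$ and the lower bound on $\vol(S)$ into $\frac{4Me(S)-\vol(S)^2}{4M^2}$ for a single set, obtaining $(1+\varepsilon)\bigl(g_{\mathcal{E}}(h)+g_{\mathcal{V}}(h)-g_{\mathcal{V}}(h)^2/(4\sqrt h)\bigr)/\sqrt h$, and then multiplies by $4$, giving $6g_{\mathcal{V}}(h)+4\sqrt{2\ln 2}-g_{\mathcal{V}}(h)^2/\sqrt h$ after substituting $g_{\mathcal{E}}=g_{\mathcal{V}}/2+\sqrt{2\ln 2}$. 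Your description of where the three terms come from does not match this and reflects a step you do not actually carry out. To repair your argument you would either need to adopt the Dinh--Thai reduction, or replace the per-set fluctuation bound by one that genuinely scales with $\mu(\tilde S)$ (so that it telescopes under $\sum_S\mu(\tilde S)=\mu(\tilde V)$), which is a stronger statement than what the paper's Theorems~\ref{thm:vol_concentration} and~\ref{thm:edges_concentration} provide.
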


\begin{remark*}
	Note that $f(h) \sim 3 \sqrt{2\ln{2}} \sqrt{\ln{h}}$ as $h \rightarrow \infty$ thus
	$f(h)/\sqrt{h} \rightarrow 0$ as $h \rightarrow \infty$. The value of $f(h)/\sqrt{h}$ drops below $1$ for $h \geq 810$. %{\color{cyan} The value of $f(h)/\sqrt{h}$ drops below $15/16$ for $h \geq 164$.}
\end{remark*}

\begin{corollary} \label{cor:main}
	Let $G_n^h$ be a preferential attachment graph. Then whp by $n \to \infty$
	\[
	\modul(G_n^h) \leq \frac{3.54 \sqrt{\ln{h}+0.62} + 19.49 }{\sqrt{h}}.
	\]
\end{corollary}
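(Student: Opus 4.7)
The plan is to derive the corollary directly from Theorem~\ref{thm:main} by algebraically simplifying $f(h)/\sqrt{h}$, discarding a non-positive term, and rounding the remaining constants upward with a small slack absorbed into the choice of $\varepsilon$.

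First I would rewrite $g_{\mathcal{V}}(h)$ so that the leading logarithmic behavior is visible: pulling the factor $9$ out of the radicand $9\ln h + 8\ln 2$ gives
\[
g_{\mathcal{V}}(h) = \frac{\sqrt{2\ln 2}}{2}\sqrt{\ln h + \tfrac{8\ln 2}{9}} + \tfrac{2}{3}\ln 2 + 2,
\]
and consequently
\[
6\,g_{\mathcal{V}}(h) + 4\sqrt{2\ln 2} = 3\sqrt{2\ln 2}\,\sqrt{\ln h + \tfrac{8\ln 2}{9}} + \bigl(4\ln 2 + 12 + 4\sqrt{2\ln 2}\bigr).
\]
Since $g_{\mathcal{V}}(h) > 0$, the remaining contribution $-g_{\mathcal{V}}(h)^2/\sqrt{h}$ to $f(h)$ is non-positive and may be discarded when producing an upper bound.

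The last step is purely numerical. One verifies the strict inequalities
\[
3\sqrt{2\ln 2} < 3.54, \qquad \tfrac{8\ln 2}{9} < 0.62, \qquad 4\ln 2 + 12 + 4\sqrt{2\ln 2} < 19.49,
\]
whose slacks are of order $10^{-3}$. Since $\ln h + \tfrac{8\ln 2}{9} \le \ln h + 0.62$, applying Theorem~\ref{thm:main} with any sufficiently small $\varepsilon$ (say $\varepsilon = 10^{-4}$) then yields the claimed bound. I do not anticipate any real obstacle beyond verifying the numerics; the corollary is essentially a cleaner presentation of Theorem~\ref{thm:main} in which the leading-order $\sqrt{\ln h/h}$ dependence is made transparent.
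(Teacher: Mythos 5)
Your proposal is correct and follows essentially the same route as the paper's own (very terse) proof: discard the non-positive $-g_{\mathcal{V}}(h)^2/\sqrt{h}$ term to get $\tilde f(h) = 6g_{\mathcal{V}}(h) + 4\sqrt{2\ln 2}$, factor the $9$ out of the radicand, verify $3\sqrt{2\ln 2}\le 3.54$, $(8/9)\ln 2\le 0.62$, $4\ln 2 + 12 + 4\sqrt{2\ln 2}\le 19.49$, and absorb the $(1+\varepsilon)$ factor from Theorem~\ref{thm:main} into the numerical slack. You supply slightly more detail than the paper (explicitly exhibiting a valid $\varepsilon$), but the argument is the same.
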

\begin{remark*}
	The value $\frac{3.54 \sqrt{\ln{h}+0.62} + 19.49 }{\sqrt{h}}$ drops below $1$ for $h \geq 847$.
\end{remark*}

\begin{remark*}
	Some new results on the fact that $\modul(G_n^h)$ is whp separated from 1 by a constant even for small values of $h$ can be found in \cite{McDRSS_2024}.
\end{remark*}

%%%%%%%%%%%%%%%%%%%%%%%%%%%%%%%%%%%%%%%%%%%%%%%%%%%%%%%%%%%%%%%%%%%%%

\section{Volume and edge density} \label{sec:VolDensity} 

When talking about $G_n^h = (V,E)$ we will very often refer to its corresponding random tree $T_{hn}=(\tilde{V},\tilde{E})$. Recall that $V=[n]$, $\tilde{V} = [hn]$ and $|\tilde{E}|=|E|=hn=:M$. For $S \subseteq V$ the corresponding set of its mini-vertices in $\tilde{V}$ will be denoted by $\tilde{S}$, thus $|\tilde{S}| = h |S|$. For $i \in [n]$ and $S \subseteq V$ let $S_i = S \cap [i]$, in particular $S_n = S$. Analogously, for $i \in [M]$ and $\tilde{S} \subseteq \tilde{V}$ set $\tilde{S}_i = \tilde{S} \cap [i]$, in particular $\tilde{S}_M = \tilde{S}$. Note that for $S \subseteq V$ we have $\vol_{G_n^h}(S) = \vol_{T_{hn}}(\tilde{S})$ and $e_{G_n^h}(S) = e_{T_{hn}}(\tilde{S})$.

When working with modularity we need to have a control over $e_{G_n^h}(S)$ and $\vol_{G_n^h}(S)$, where $S \subseteq V$. Those values depend a lot on the arrival times of vertices from $S$. To capture this phenomenon we define a special measure $\mu:2^{\tilde{V}} \rightarrow [0,\infty)$, where $2^{\tilde{V}}$ stands for the set of all subsets of $\tilde{V}$.

\begin{definition}[Measure $\mu$] \label{def:mu_delta_c}
	Let $G_n^h = (V,E)$ be a preferential attachment graph and $T_{hn} = (\tilde{V},\tilde{E})$ its corresponding random tree. Let $S \subseteq V$ thus $\tilde{S} \subseteq \tilde{V}$ is the set of its corresponding mini-vertices. Associate $\tilde{S}$ with the set of indicator functions
	\[
	{\delta}_i^{\tilde{S}} = \begin{cases} 1 &\quad \textnormal{if} \quad i \in \tilde{S}\\
		0 &\quad \textnormal{if} \quad i \notin \tilde{S},
	\end{cases}
	\]
	where $i \in [M]$ (whenever the context is clear we write ${\delta}_i$ instead of ${\delta}_i^{\tilde{S}}$). Define a function $\mu:2^{\tilde{V}} \rightarrow [0,\infty)$ as follows:
	\[
	\mu(\tilde{S}) = \frac{\sqrt{\pi}}{2} \cdot \sum_{j=1}^{M} {\delta}_j^{\tilde{S}} c_{j-1}
	\]
	with $c_j = \prod_{i=1}^{j}\frac{2i-1}{2i}$ for $j \geq 1$ and $c_0=1$.
\end{definition}
\begin{remark*}
	Let $G_n^h = (V,E)$ be a preferential attachment graph, $S \subseteq V$ and $t \in [M]$. Note that
	\[
	\mu(\tilde{S}_t) = \frac{\sqrt{\pi}}{2} \cdot \sum_{j=1}^{t} {\delta}_j^{\tilde{S}} c_{j-1}.
	\]
\end{remark*}

We use the measure $\mu$ to express the following novel concentration results for $\vol_{G_n^h}(S)$, $e_{G_n^h}(S)$, and $e_{G_n^h}(S,V \setminus S)$, where $S$ is an arbitrary subset of $V$.

\begin{theorem} \label{thm:vol_concentration}
	Let $G_n^h = (V,E)$ be a preferential attachment graph and $T_{hn} = (\tilde{V},\tilde{E})$ its corresponding random tree. Then for every $\varepsilon >0$ whp by $n \to \infty$
	\[ 
	\forall S \subseteq V \,\,
	\left|\vol(S) - 2 \sqrt{M} \, \mu(\tilde{S})\right| \leq  (1+\varepsilon)g_{\mathcal{V}}(h) \frac{M}{\sqrt{h}}, 
	\]
	where $g_{\mathcal{V}}(h) = \frac{1}{6}\sqrt{2 \ln{2} \, (9 \ln{h}+8\ln{2})} + (2/3) \ln{2} + 2$.
\end{theorem}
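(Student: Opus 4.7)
My plan is to read off a natural martingale from the definition of $\mu$, apply a concentration inequality of Freedman/Azuma--Hoeffding type, and close with a union bound over the subsets $S\subseteq V$. For a fixed $\tilde S\subseteq\tilde V$, set $V_t:=\vol_{T_t}(\tilde S_t)$; a direct case analysis of the attachment step at time $t+1$ yields
\[
\E[V_{t+1}\mid\mathcal F_t]\;=\;\frac{2t+2}{2t+1}\bigl(V_t+\delta_{t+1}^{\tilde S}\bigr).
\]
Because $c_{t+1}(2t+2)=c_t(2t+1)$, the process $Y_t:=c_tV_t$ satisfies $\E[Y_{t+1}-Y_t\mid\mathcal F_t]=c_t\,\delta_{t+1}^{\tilde S}$, so
\[
\tilde Y_t\;:=\;Y_t\;-\;\sum_{j=1}^{t}c_{j-1}\,\delta_j^{\tilde S}
\]
is a zero-mean martingale with respect to the edge-by-edge filtration, and its terminal value satisfies
\[
\frac{\tilde Y_M}{c_M}\;=\;\vol(\tilde S)\;-\;\frac{2\mu(\tilde S)}{\sqrt{\pi}\,c_M}.
\]
Wallis's formula $\sqrt{\pi M}\,c_M=1-\tfrac{1}{8M}+O(M^{-2})$, together with the crude bound $\mu(\tilde S)\le\tfrac{\sqrt\pi}{2}\sum_{j=0}^{M-1}c_j=\sqrt\pi\,M c_M=O(\sqrt M)$ (which uses the identity $\sum_{k=0}^{t-1}c_k=2tc_t$), implies $\bigl|\tfrac{2\mu(\tilde S)}{\sqrt\pi\,c_M}-2\sqrt M\,\mu(\tilde S)\bigr|=O(1)$, a bias that is harmlessly absorbed in the $(1+\varepsilon)$ factor of the statement.

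For the concentration of $|\tilde Y_M|$, I would next compute, by a two-case analysis depending on whether $\delta_{t+1}^{\tilde S}$ equals $0$ or $1$, both the pointwise bound $|\tilde Y_{t+1}-\tilde Y_t|\le c_t\le c_1=\tfrac{1}{2}$ and the conditional variance estimate
\[
\sigma_t^2\;\le\;\frac{c_t^2\,(V_t+1)(2t-V_t)}{(2t+2)^2}.
\]
Freedman's inequality then gives, for each fixed $\tilde S$, a tail bound of the form $\Pr(|\tilde Y_M|>\lambda)\le 2\exp(-\lambda^2/(2W+\tfrac{2}{3}\lambda))$ with $W=\sum_t\sigma_t^2$. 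Taking a union bound over the $2^n$ possible choices of $S\subseteq V$ and picking $\lambda=g_{\mathcal V}(h)\,c_M M/\sqrt h$ converts this into the claimed inequality on $\vol(\tilde S)-2\sqrt M\,\mu(\tilde S)$, after multiplication by $c_M^{-1}\sim\sqrt{\pi M}$.

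The main difficulty I foresee lies in keeping the variance sum $W$ small enough. The naive worst-case bound $\sigma_t^2\le c_t^2/4$ gives only $W=O(\log M/\pi)=O((\log h+\log n)/\pi)$, which, combined with the $2^n$ union-bound cost $n\ln 2$, forces a coefficient $\sqrt{\log M}$ inside $g_{\mathcal V}(h)$ rather than the desired $\sqrt{\log h}$. To cure this, I would first establish an auxiliary high-probability ``good event'' (of the sort collected in Section~\ref{sec:auxliary}) that controls $V_t$ along the whole trajectory tightly enough to replace the worst-case variance by an effective variance proportional to $c_t^2\,\min(V_t+1,2t-V_t)/(2t+2)$, exploiting the fact that $\tilde S$ is a union of $h$-blocks so that for $t\ll h$ at most one block is partially exposed. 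The sum then telescopes to something of order $\log h$ uniformly in $S$, and the somewhat ornate constants $\tfrac{1}{6}\sqrt{2\ln 2\,(9\ln h+8\ln 2)}+\tfrac{2}{3}\ln 2+2$ appearing in $g_{\mathcal V}(h)$ arise naturally from optimising the trade-off between the sub-Gaussian term $\sqrt{2W\,n\ln 2}$ and the Bernstein-type linear remainder in Freedman's inequality.
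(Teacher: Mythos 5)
Your martingale $\tilde Y_t=c_tV_t-\sum_{j\le t}c_{j-1}\delta_j^{\tilde S}$ is exactly the one in the paper's Lemma~\ref{lemma:Z^t_martingale}, and you correctly diagnose the central obstacle: a naive application of Freedman from $t=1$ to $t=M$ gives a variance sum $W=\Theta(\ln M)=\Theta(\ln n+\ln h)$, which is too large to survive the $2^n$ union bound and still produce a $\sqrt{\ln h}$ (rather than $\sqrt{\ln n}$) coefficient. However, your proposed cure does not work. You want a ``good event'' controlling the trajectory of $V_t$ so that the effective variance shrinks, aided by the block structure (``for $t\ll h$ at most one block is partially exposed''). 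This only helps on the range $t\le h$, where indeed $V_t\in\{0,2t\}$ deterministically and the conditional variance vanishes; but the sum $\sum_{t>h}\sigma_t^2$ alone is already of order $\frac{1}{4\pi}\ln(M/h)=\frac{1}{4\pi}\ln n$. For a set $S$ whose $\mu(\tilde S)$ is of order $\sqrt M/2$, the typical trajectory has $V_t\approx t$, so $\sigma_t^2\approx c_t^2/4$ is near its worst-case value, and conditioning on a good event actually concentrates the trajectory near this worst case rather than away from it. No refinement of the variance bound along these lines can bring $W$ below $\Theta(\ln n)$.

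The paper's fix is of a different nature and is the crucial missing idea: it does \emph{not} run the martingale from time $1$. In Theorem~\ref{thm:vol_concentration_all} it sets $t_0=\lfloor t/h\rfloor$ (so $t_0=n$ when $t=M$) and applies Freedman only on $\{t_0,\dots,t\}$, giving $W\le\frac{1}{4\pi}\ln(t/t_0)+O(1/t)=\frac{1}{4\pi}\ln h+o(1)$. The price is that $\hat Z_{t_0}$ is no longer essentially zero; but one can afford the completely crude bound $Z_{t_0}\le 2t_0$, which after rescaling by $c_{t_0}/c_t\approx\sqrt{t/t_0}=\sqrt h$ contributes an additive error $\frac{c_{t_0}}{c_t}Z_{t_0}\le\frac{2t}{\sqrt h}+O(1)$. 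This is exactly where the ``$+2$'' in $g_{\mathcal V}(h)$ comes from, and the remaining $\frac16\sqrt{2\ln2(9\ln h+8\ln2)}+\frac23\ln2$ is produced by balancing Freedman's sub-Gaussian and Bernstein terms against the union-bound cost, with the probability target chosen as $2^{-(1+\varepsilon/2)t/h}$ so that, at time $hi$, the failure probability beats the $2^i$ available configurations of $S_i$. Without the restarted martingale your argument cannot reach $g_{\mathcal V}(h)=\Theta(\sqrt{\ln h})$; you would be forced to a $\Theta(\sqrt{\ln n})$ coefficient, which is not even bounded in $n$.
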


\begin{theorem} \label{thm:edges_concentration}
	Let $G_n^h$ be a preferential attachment graph and $T_{hn} = (\tilde{V},\tilde{E})$ its corresponding random tree. Then for every $\varepsilon >0$ whp by $n \to \infty$ %and for sufficiently large $n$ we have
	\[
	\forall S \subseteq V \,\, \left| e(S) - \mu(\tilde{S})^2 \right| \leq (1+\varepsilon)g_{\mathcal{E}}(h) \frac{M}{\sqrt{h}},
	\]
	where 
	\[
	g_{\mathcal{E}}(h) = \frac{g_{\mathcal{V}}(h)}{2} + \sqrt{2 \ln{2}}
	\]
	with
	\[
	g_{\mathcal{V}}(h) = \frac{1}{6}\sqrt{2 \ln{2} \, (9 \ln{h}+8\ln{2})} + (2/3) \ln{2} + 2.
	\]
\end{theorem}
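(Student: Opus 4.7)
The plan is to mimic the proof of Theorem~\ref{thm:vol_concentration} but applied to the edge-counting process. Fix a subset $S\subseteq V$, let $\tilde S\subseteq \tilde V$ be the associated set of mini-vertices, set $\mathcal F_t=\sigma(T_t)$, and for $t\ge 2$ let $X_t=\mathbf{1}[\text{the edge born at step }t\text{ is internal to }\tilde S]$. Then
\[
e(\tilde S)=\mathbf{1}[1\in\tilde S]+\sum_{t=2}^{M}X_t,\qquad \E[X_t\mid\mathcal F_{t-1}]=\delta_t\,\frac{\vol_{T_{t-1}}(\tilde S_{t-1})+1}{2t-1},
\]
because, conditionally on $\mathcal F_{t-1}$ with $\delta_t=1$, the edge is internal exactly when its old endpoint lies in $\tilde S_{t-1}\cup\{t\}$. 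I would decompose
\[
e(\tilde S)-\mu(\tilde S)^2 \;=\; D(\tilde S)+R(\tilde S),
\]
with $D(\tilde S)=\sum_{t=2}^{M}(X_t-\E[X_t\mid\mathcal F_{t-1}])$ the martingale fluctuation, and $R(\tilde S)$ the remaining predictable drift, and bound each piece separately.

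For $D(\tilde S)$ the increments are bounded by $1$ almost surely, so Azuma--Hoeffding gives $\Pr[|D(\tilde S)|>\lambda]\le 2\exp(-\lambda^2/(2M))$. Taking $\lambda=(1+\varepsilon/2)\sqrt{2\ln 2}\,M/\sqrt h$ makes the exponent at least $(1+\varepsilon/2)^2(\ln 2)\,n$, so a union bound over the $2^n$ vertex subsets of $V$ yields $|D(\tilde S)|\le(1+\varepsilon/2)\sqrt{2\ln 2}\,M/\sqrt h$ simultaneously for all $S$, whp. This produces the $\sqrt{2\ln 2}$ summand of $g_{\mathcal E}(h)$.

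To handle $R(\tilde S)$ I would invoke the (uniform-in-$t$ version of) Theorem~\ref{thm:vol_concentration} at each intermediate time $t\in[M]$, replacing $\vol_{T_{t-1}}(\tilde S_{t-1})$ by $2\sqrt{t-1}\,\mu(\tilde S_{t-1})$ with additive error at most $(1+\varepsilon')g_{\mathcal V}(h)(t-1)/\sqrt h$. The contribution of this error to $R(\tilde S)$ is bounded by
\[
\sum_{t=2}^{M}\delta_t\cdot\frac{(1+\varepsilon')g_{\mathcal V}(h)(t-1)/\sqrt h}{2t-1}\;\le\;\frac{(1+\varepsilon')g_{\mathcal V}(h)}{2\sqrt h}\sum_{t=2}^{M}\delta_t\;\le\;(1+\varepsilon')\,\frac{g_{\mathcal V}(h)}{2}\cdot\frac{M}{\sqrt h},
\]
which gives the $g_{\mathcal V}(h)/2$ summand of $g_{\mathcal E}(h)$. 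The remaining ``ideal'' drift is $\sum_{t=2}^{M}\delta_t\frac{2\sqrt{t-1}}{2t-1}\mu(\tilde S_{t-1})-\mu(\tilde S)^2$. Telescoping $\mu(\tilde S_t)^2-\mu(\tilde S_{t-1})^2=\sqrt{\pi}\,c_{t-1}\delta_t\mu(\tilde S_{t-1})+\tfrac{\pi}{4}c_{t-1}^2\delta_t$ rewrites $\mu(\tilde S)^2$ as $\sum_t\sqrt{\pi}c_{t-1}\delta_t\mu(\tilde S_{t-1})$ plus an $O(\ln M)$ remainder (because $c_{t-1}^2\sim 1/(\pi(t-1))$). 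Since Stirling gives $\frac{2\sqrt{t-1}}{2t-1}-\sqrt{\pi}c_{t-1}=O(t^{-3/2})$, while the trivial bound $\mu(\tilde S_{t-1})\le\frac{\sqrt{\pi}}{2}\sum_{j=1}^{t-1}c_{j-1}=O(\sqrt{t})$ holds, the residual is $\sum_t\delta_t\cdot O(1/t)=O(\ln M)$, hence lower order than $M/\sqrt h$.

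The main obstacle is the invocation of Theorem~\ref{thm:vol_concentration} \emph{uniformly in $t$}: the statement as given only yields concentration at the final time $M$. I expect to obtain the uniform-in-$t$ version either by running the martingale argument that proves Theorem~\ref{thm:vol_concentration} together with Doob's maximal inequality, or by a union bound over $t\in[M]$ combined with Bernstein-type tail estimates; the key point is that the coefficient $g_{\mathcal V}(h)$ must survive this extension up to $(1+o(1))$. The rest is bookkeeping: matching $(1+\varepsilon/2)$'s in the two pieces so that the overall bound is exactly $(1+\varepsilon)g_{\mathcal E}(h)\,M/\sqrt h$.
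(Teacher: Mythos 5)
Your decomposition $e(\tilde S)-\mu(\tilde S)^2=D(\tilde S)+R(\tilde S)$ into a martingale fluctuation plus a predictable drift, the Azuma bound on $D$ producing the $\sqrt{2\ln 2}$ summand, the substitution of $2\sqrt{t-1}\,\mu(\tilde S_{t-1})$ for the volume in the drift producing the $g_{\mathcal V}(h)/2$ summand, and the telescoping that identifies the ideal drift with $\mu(\tilde S)^2+O(\ln M)$ are all exactly the paper's proof (Lemma~\ref{lemma:X^t_martingale}, Lemma~\ref{lemma:XM_as_Zis}, and Lemmas~\ref{lemma:only_delta}--\ref{lemma:Zi_second_part}). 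So the plan is correct and essentially coincides with the paper's argument.

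The one genuine gap is the one you flag: a uniform-in-$t$ version of the volume concentration. The paper fills it (Corollary~\ref{cor:vol_conc_for_all_i} and Lemma~\ref{lemma:Zis_all_concentrated}) via your second suggestion, a union bound over $t$, but two points are crucial and not visible in your sketch. First, the per-time tail from Theorem~\ref{thm:vol_concentration_all} is only $2^{-(1+\varepsilon/2)t/h}$, so at time $t=hi$ you cannot afford a union over all $2^n$ subsets of $V$: you must observe that the event at time $t$ depends on $S$ only through $S_i=S\cap[i]$, of which there are just $2^i=2^{t/h}$, and then the union bound at scale $i$ costs $2^i\cdot 2\cdot 2^{-(1+\varepsilon/2)i}=2\cdot 2^{-(\varepsilon/2)i}$, summable over $i$. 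Second, for this sum to be $o(1)$ one must start from $t_0=h\lfloor\log_2 n\rfloor$, not from $t=1$; the contribution of the initial segment is then absorbed deterministically via $e(S_{t_0/h})\le t_0=O(\ln n)$, and the $\mathcal F_{t_0}$-measurable offsets in the martingale are similarly $O(1)$ after renormalization. Your alternative route via Doob's maximal inequality would not by itself deliver the needed $t$-dependent error of size $\Theta(t/\sqrt h)$ at every intermediate $t$; you would still need a multiscale/peeling argument, which amounts to the same union bound over scales. Finally, a minor bookkeeping item: Corollary~\ref{cor:vol_conc_for_all_i} gives concentration only at times $t=hi$; the paper extends it to intermediate $t=hi+k$ with $k\in\{1,\dots,h-1\}$ by a deterministic $O(1)$ comparison (Lemma~\ref{lemma:Zis_all_concentrated}), which you would also need since your drift sum ranges over all $t\in\{t_0+1,\dots,M\}$.
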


\begin{theorem} \label{thm:edges_between_concentration}
	Let $G_n^h$ be a preferential attachment graph and $T_{hn} = (\tilde{V},\tilde{E})$ its corresponding random tree. Then for every $\varepsilon >0$ whp by $n \to \infty$ %and for sufficiently large $n$ we have
	\[
	\forall S \subseteq V \, \left| e(S, V \setminus S) - 2\mu(\tilde{S})(\sqrt{M}-\mu(\tilde{S})) \right| \leq (1+\varepsilon) \left(\frac{3}{2}g_{\mathcal{V}}(h) + \sqrt{2\ln{2}}\right) \frac{M}{\sqrt{h}},
	\]
	where 
	\[
	g_{\mathcal{V}}(h) = \frac{1}{6}\sqrt{2 \ln{2} \, (9 \ln{h}+8\ln{2})} + (2/3) \ln{2} + 2.
	\]
\end{theorem}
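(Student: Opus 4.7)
The plan is to reduce the claim to Theorems~\ref{thm:vol_concentration} and~\ref{thm:edges_concentration} via the handshake identity $e(S, V\setminus S) = \vol(S) - 2e(S)$, which holds for every graph because each edge inside $S$ (including loops) contributes $2$ to $\vol(S)$ while each cut edge contributes $1$. Expanding $2\mu(\tilde S)(\sqrt M - \mu(\tilde S)) = 2\sqrt M\,\mu(\tilde S) - 2\mu(\tilde S)^2$ and subtracting yields the algebraic identity
\[
e(S, V\setminus S) - 2\mu(\tilde S)\bigl(\sqrt M - \mu(\tilde S)\bigr) = \bigl(\vol(S) - 2\sqrt M\,\mu(\tilde S)\bigr) - 2\bigl(e(S) - \mu(\tilde S)^2\bigr).
\]

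Next, fix $\varepsilon'>0$ and apply Theorems~\ref{thm:vol_concentration} and~\ref{thm:edges_concentration} with parameter $\varepsilon'$. A union bound over the two resulting good events shows that, whp, both concentration estimates hold simultaneously for every $S \subseteq V$. The triangle inequality applied to the identity above then controls the left-hand side by a sum of the two individual errors, yielding a bound of the correct order $M/\sqrt h$ uniformly in $S$.

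The main obstacle is obtaining the sharp constant $\tfrac{3}{2}g_{\mathcal{V}}(h) + \sqrt{2\ln 2} = g_{\mathcal{V}}(h) + g_{\mathcal{E}}(h)$ claimed in the statement: the naive triangle step instead produces the looser $g_{\mathcal{V}}(h) + 2g_{\mathcal{E}}(h) = 2g_{\mathcal{V}}(h) + 2\sqrt{2\ln 2}$. I would close this gap by exploiting a structural cancellation in the martingale underlying Theorem~\ref{thm:vol_concentration}. When a new mini-vertex is attached in $T_{hn}$, the quantity $\vol(\tilde S) - 2e(\tilde S) = e(\tilde S, \tilde V\setminus\tilde S)$ changes by at most $1$, whereas $\vol(\tilde S)$ alone may change by $2$, because the contributions of an edge with both endpoints in $\tilde S$ to $\vol(\tilde S)$ and to $2e(\tilde S)$ are equal and cancel in the difference. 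Running the Doob-martingale argument that underlies Theorem~\ref{thm:vol_concentration} directly on the process $\vol_{T_t}(\tilde S_t) - 2e_{T_t}(\tilde S_t) - 2\mu(\tilde S_t)(\sqrt t - \mu(\tilde S_t))$, rather than invoking Theorems~\ref{thm:vol_concentration} and~\ref{thm:edges_concentration} as black boxes, should replace the coefficient $2$ in front of $g_{\mathcal{E}}(h)$ by $1$ and deliver the stated constant.
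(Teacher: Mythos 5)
Your decomposition via the handshake identity $e(S,V\setminus S)=\vol(S)-2e(S)$ and the resulting algebraic identity are correct, and you are right that the naive triangle inequality applied to Theorems~\ref{thm:vol_concentration} and~\ref{thm:edges_concentration} gives the constant $g_{\mathcal{V}}(h)+2g_{\mathcal{E}}(h)=2g_{\mathcal{V}}(h)+2\sqrt{2\ln 2}$, which is strictly larger than the claimed $\tfrac32 g_{\mathcal{V}}(h)+\sqrt{2\ln 2}=g_{\mathcal{V}}(h)+g_{\mathcal{E}}(h)$. So that route alone does not establish the statement, and you correctly flag this. Your proposed remedy --- running a martingale directly on the cut count $Y_t:=e_{T_t}(\tilde S_t,\tilde V_t\setminus\tilde S_t)$, whose one-step increments lie in $\{0,1\}$ --- is exactly the approach the paper intends, as it merely says \emph{``Mimicking the reasoning from the proofs of Lemma~\ref{lemma:X^t_martingale}, Lemma~\ref{lemma:XM_as_Zis}, and Theorem~\ref{thm:edges_concentration} one can prove Theorem~\ref{thm:edges_between_concentration}''} and gives no further detail, so your sketch is at comparable resolution.

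Two inaccuracies in the sketch should be fixed before it could count as a proof. First, the object you name, $\vol_{T_t}(\tilde S_t)-2e_{T_t}(\tilde S_t)-2\mu(\tilde S_t)\bigl(\sqrt t-\mu(\tilde S_t)\bigr)$, is not a martingale: the deterministic correction $2\mu(\tilde S_t)(\sqrt t-\mu(\tilde S_t))$ does not cancel the conditional drift of $Y_t$, which is the random quantity
$D'_j=\delta_j\frac{2(j-1)-Z_{j-1}}{2j-1}+(1-\delta_j)\frac{Z_{j-1}}{2j-1}$
with $Z_{j-1}=\vol_{T_{j-1}}(\tilde S_{j-1})$. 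The martingale to use is the Doob construction $\hat Y_t=Y_t-\sum_{j\le t}D'_j$, analogous to Lemma~\ref{lemma:X^t_martingale}; Azuma then controls $|Y_M-Y_{t_0}-\sum_j D'_j|$ (an analogue of Lemma~\ref{lemma:XM_as_Zis}), and a separate step --- invoking Lemma~\ref{lemma:Zis_all_concentrated} for $Z_{j-1}$ together with the asymptotics in Lemmas~\ref{lemma:ct}--\ref{lemma:Zi_second_part} applied to both $\delta$ and $\bar\delta=1-\delta$ --- is needed to replace $\sum_j D'_j$ by $2\mu(\tilde S)(\sqrt M-\mu(\tilde S))$. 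You skip this second, non-trivial step entirely, and it is where the $g_{\mathcal{V}}(h)$-dependent part of the constant actually arises. Second, you attribute the Doob-martingale to Theorem~\ref{thm:vol_concentration}, but that theorem uses the $c_t$-weighted martingale of Lemma~\ref{lemma:Z^t_martingale}; the Doob-style argument is the one for $e(S)$ in Lemma~\ref{lemma:X^t_martingale}. Finally, your assertion that the direct approach ``should deliver the stated constant'' is offered without calculation; carrying out the steps above (the compensator error enters with coefficient $|1-2\delta_j|/(2j-1)\le 1/(2j-1)$, so it contributes $\tfrac12 g_{\mathcal{V}}(h)\,M/\sqrt h$, plus $\sqrt{2\ln 2}\,M/\sqrt h$ from Azuma) in fact gives a constant at most the stated one, so the theorem does follow, but the claim that the coefficient is exactly $\tfrac32 g_{\mathcal V}(h)+\sqrt{2\ln 2}$ was guessed, not derived.
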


To grasp the intuition hidden behind the above concentration results, it is helpful to know that there is a relation between the structure of the graph $T_{hn}$  and the structure of a random graph $\hat{G}$ on the vertex set $[M]$ in which every edge $\{i,j\}$ (for $i,j \in [M]$) is present
with probability $1/(2\sqrt{ij})$, independently of the other possible edges (in particular, a loop at vertex $i$ is present with probability $1/(2i)$, consult Section 4 in \cite{Bol_Rio_diameter} by Bollob{\'a}s and Riordan). %In particular,
We will see that, for any set $\tilde{S}$, the values $\mu(\tilde{S})$ and $\mu(\tilde{S})^2$ are closely related to the expected value of $\vol_{\hat{G}} (\tilde{S})$ and $e_{\hat{G}}(\tilde{S})$, respectively.

Let $\tilde{S} \subseteq [M]$. The number of inner edges of $\tilde{S}$ in $\hat{G}$, $e_{\hat{G}}(\tilde{S})$, satisfies
\begin{align*}
	\E[e_{\hat{G}}(\tilde{S})] & = \frac{1}{2} \sum_{i \in\tilde{S}} \sum_{\substack{j \in S \\ j \neq i}} \frac{1}{2\sqrt{ij}} + \sum_{i \in \tilde{S}}\frac{1}{2i} =  \sum_{i \in \tilde{S}} \frac{1}{2\sqrt{i}} \sum_{\substack{j \in \tilde{S} \\ j \neq i}} \frac{1}{2\sqrt{j}} + \sum_{i \in \tilde{S}}\frac{1}{2i} \\
	& =  \left(\sum_{i\in \tilde{S}} \frac{1}{2\sqrt{i}}\right)^2 - \sum_{i \in \tilde{S}} \frac{1}{4i} = \left(\sum_{i\in \tilde{S}} \frac{1}{2\sqrt{i}}\right)^2 + O(\ln{M}).
\end{align*}
Analogously one shows that the number of edges between $\tilde{S}$ and $[M]\setminus \tilde{S}$, $e_{\hat{G}}(\tilde{S},[M]\setminus \tilde{S})$, fulfills
$$\E[e_{\hat{G}}(\tilde{S},[M]\setminus \tilde{S})] = 2 \left(\sum_{i\in \tilde{S}} \frac{1}{2\sqrt{i}}\right) \left(\sum_{i\in [M]\setminus \tilde{S}} \frac{1}{2\sqrt{i}}\right).$$
Since $\vol_{\hat{G}} (\tilde{S}) = 2 e_{\hat{G}}(\tilde{S}) + e_{\hat{G}}(\tilde{S},\tilde{V}\setminus{\tilde{S}})$ one also gets
$$\E[\vol_{\hat{G}} (\tilde{S}) ] = 2  \left(\sum_{i\in [M]} \frac{1}{2\sqrt{i}}\right) \left(\sum_{i\in \tilde{S}} \frac{1}{2\sqrt{i}}\right) +O(\ln{M}).$$

We will see later (consult Lemma~\ref{lemma:ct}) that the value of $\sqrt{\pi}c_j$ is asymptotically close to $1/\sqrt{j}$ thus the measure $\mu$ is constructed in such a way that $\mu(\tilde{S})$ mimics the behavior of $\sum_{i \in \tilde{S}} \frac{1}{2\sqrt{i}}$ in $\hat{G}$.  In particular $\mu(\{i\}) \sim \frac{1}{2 \sqrt{i}}$ for $i \xrightarrow[n \rightarrow \infty]{} \infty$ and $\mu([M]) \sim \sqrt{M}$. Therefore we may expect that in $T_{hn}$ we will get $e(\tilde{S}) \approx \mu(\tilde{S})^2$, $e(\tilde{S}, V\setminus \tilde{S}) \approx 2\mu(\tilde{S})(\sqrt{M}-\mu(\tilde{S}))$ and $\vol(\tilde{S}) \approx 2 \sqrt{M} \mu(\tilde{S})$.

%%%%%%%%%%%%%%%%%%%%%%%%%%%%%%%%%%%%%%%%%%%%%%%%%%%%%%%%%%%%%%%%%%%%%

\section{Auxiliary lemmas} \label{sec:auxliary}

The current section gathers all technical lemmas needed in the latter parts of the paper.

The concentration results presented in Section \ref{sec:VolDensity} and proved in Section \ref{sec:concentrations} are based on two variants of the Azuma-Hoeffding martingale inequality. The first one is standard. We state it as it appears in \cite{JLR_random_graphs} by Janson, {\L}uczak and Ruci{\'n}ski.

\begin{lemma} [Azuma-Hoeffding inequality, \cite{Az67,Ho63}] \label{lemma:Azuma}
	If $X_0, X_1, \ldots, X_n$ is a martingale and there exist $b_1, \ldots, b_{n}$ such that $|X_{j} - X_{j-1}| \leq b_j$ for each $j \in [n]$, then, for every $x>0$,
	\[
	%\begin{split}
	\Pr[X_n \geq X_0
	%\E[X_n]
	+x] \leq \exp\left\{-\frac{x^2}{2 \sum_{j=1}^{n}b_j^2} \right\}.
	%	& \Pr[X_n \leq \E[X_n]-x] \leq \exp\left\{-\frac{x^2}{2 \sum_{k=1}^{n}b_k^2}\right\}.
	%\end{split}
	\]
\end{lemma}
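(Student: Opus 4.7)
The plan is to prove the bound by the standard exponential moment (Chernoff) method, with Hoeffding's lemma providing the key analytic input. First, I would introduce the martingale differences $Y_j := X_j - X_{j-1}$, which by hypothesis satisfy $|Y_j| \le b_j$ and, by the martingale property, $\E[Y_j \mid X_0, \ldots, X_{j-1}] = 0$. Telescoping gives $X_n - X_0 = \sum_{j=1}^n Y_j$, reducing the tail bound to controlling $\E[\exp(\lambda (X_n - X_0))]$ for a suitable $\lambda > 0$.

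Next, I would apply Markov's inequality in the form $\Pr[X_n - X_0 \ge x] \le e^{-\lambda x}\, \E[\exp(\lambda (X_n - X_0))]$ and factor the exponential as $\prod_{j=1}^n e^{\lambda Y_j}$. Conditioning on $X_0, \ldots, X_{n-1}$ and invoking the tower property peels off the last factor: the inner conditional expectation is $\E[e^{\lambda Y_n} \mid X_0,\ldots,X_{n-1}]$, while the remaining factor $e^{\lambda(X_{n-1}-X_0)}$ is measurable and can be taken outside. Iterating reduces the joint exponential moment to a product of conditional moment generating functions of the $Y_j$.

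The analytic heart of the proof is Hoeffding's lemma: any zero-mean random variable $Z$ with $|Z| \le b$ satisfies $\E[e^{\lambda Z}] \le \exp(\lambda^2 b^2/2)$. I would prove it by bounding $e^{\lambda z}$ on $[-b,b]$ by the chord joining $(-b, e^{-\lambda b})$ and $(b, e^{\lambda b})$ (using convexity of the exponential), taking expectations to obtain $\E[e^{\lambda Z}] \le \tfrac{1}{2}(e^{\lambda b}+e^{-\lambda b})$, and then showing that the log of the right-hand side, viewed as a function of $\lambda$, has value and derivative $0$ at $\lambda = 0$ and second derivative at most $b^2$; Taylor's theorem yields the quadratic bound. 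Applying the lemma conditionally to each $Y_j$ (with $b$ replaced by $b_j$) yields
\[
\E[\exp(\lambda (X_n - X_0))] \;\le\; \exp\!\Big(\tfrac{\lambda^2}{2} \sum_{j=1}^n b_j^2\Big).
\]

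Finally, I would optimize the resulting bound $\exp\!\big(-\lambda x + \tfrac{\lambda^2}{2}\sum_j b_j^2\big)$ over $\lambda > 0$; the minimizer $\lambda = x/\sum_j b_j^2$ delivers the claimed $\exp\!\big(-x^2/(2\sum_j b_j^2)\big)$. The only nontrivial step is Hoeffding's lemma, and that is really where the constant $1/2$ in the exponent is paid for; the rest is bookkeeping with the tower property and the standard Chernoff optimization.
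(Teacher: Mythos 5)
Your proof is correct and is the standard textbook derivation of the Azuma--Hoeffding inequality (Chernoff method, tower-property peeling, Hoeffding's lemma via the chord/convexity argument, then optimization over $\lambda$). The paper itself does not prove this lemma; it cites it as a known result from the references \cite{Az67,Ho63} (stated as in Janson, {\L}uczak and Ruci\'nski), so there is no in-paper proof to compare against. One minor point of care: when you apply Hoeffding's lemma conditionally, the conditioning should be on the filtration $\mathcal{F}_{j-1}$ generated by the process up to time $j-1$ (which the lemma statement implicitly encodes by saying $X_0,\ldots,X_n$ is a martingale), not merely on the values $X_0,\ldots,X_{j-1}$; in the generality of an arbitrary underlying filtration these need not coincide, though for the purposes of this paper the distinction is immaterial. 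With that caveat noted, the argument is complete and correct.
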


The second one is Freedman's inequality. We state it below in the form very similar to the one presented in Lemma 2.2 of \cite{Wa16} by Warnke (one may consult also \cite{BeDu22} by Bennett and Dudek).

\begin{lemma} [Freedman's inequality, \cite{Fr75}] \label{lemma:Freedman}
	Let $X_0, X_1, \ldots, X_n$ be a martingale with respect to a filtration $\mathcal{F}_0 \subseteq \mathcal{F}_1 \subseteq \ldots \subseteq \mathcal{F}_n$. Set $A_k = \max_{i \in [k]} (X_i - X_{i-1})$ and $W_k = \sum_{i=1}^{k} \var{X_i-X_{i-1}|\mathcal{F}_{i-1}}$. Then for every $\lambda>0$ and $W,A>0$ we have
	\[
	\Pr\big[\exists k \in [n] \quad X_k \geq X_0 + \lambda, W_k \leq W, A_k \leq A\big] \leq \exp\left\{-\frac{\lambda^2}{2W + 2A\lambda/3}\right\}.
	\]
\end{lemma}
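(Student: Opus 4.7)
The plan is to use the standard exponential-supermartingale method with optional stopping, which is the canonical route to Bennett/Freedman-type tail bounds. For each $\theta>0$ I would introduce the process $Z_k=\exp\bigl(\theta(X_k-X_0)-\psi(\theta)W_k\bigr)$, with $\psi$ chosen so that $Z_k$ becomes a supermartingale on the set where the one-sided increment bound holds; optional stopping, Markov's inequality, and an optimisation over $\theta$ then produce the claimed tail bound.

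The key ingredient is the conditional exponential-moment estimate underlying Bennett's inequality: if $Y$ is $\mathcal{F}$-integrable with $\E[Y\mid\mathcal{F}]=0$, $Y\le A$ almost surely, and $\E[Y^2\mid\mathcal{F}]=\sigma^2$, then
\[
\E\bigl[e^{\theta Y}\mid\mathcal{F}\bigr]\le \exp\bigl(\sigma^2\,\phi(\theta A)/A^2\bigr),\qquad \phi(x):=e^x-1-x.
\]
This follows from the observation that $y\mapsto (e^{\theta y}-1-\theta y)/y^2$ is increasing in $y$, so $e^{\theta Y}\le 1+\theta Y+Y^2\phi(\theta A)/A^2$ on $\{Y\le A\}$; taking conditional expectation kills the linear term. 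Choosing $\psi(\theta):=\phi(\theta A)/A^2$ turns the bound into $\E[Z_k\mid\mathcal{F}_{k-1}]\le Z_{k-1}$ at every step where the increment stays below $A$.

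To handle the maximal form of the statement I would couple this with a stopping-time argument. Let $\tau$ be the first $k\in[n]$ at which either the tail event $X_k\ge X_0+\lambda$ occurs or one of the constraints $W_k\le W$, $A_k\le A$ is about to be violated (using that $W_k$ is $\mathcal{F}_{k-1}$-predictable and $A_k$ is $\mathcal{F}_k$-adapted), extended by $\tau=n$ otherwise. Then $Z_{k\wedge\tau}$ is a genuine supermartingale, so $\E[Z_\tau]\le 1$; on the event $E$ of interest one has $X_\tau-X_0\ge\lambda$ and $W_\tau\le W$, hence $Z_\tau\ge \exp\bigl(\theta\lambda-\phi(\theta A)W/A^2\bigr)$, and Markov's inequality delivers
\[
\Pr[E]\le \exp\bigl(-\theta\lambda+\phi(\theta A)W/A^2\bigr).
\]

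It remains to optimise over $\theta$. Using the elementary inequality $\phi(x)\le x^2/(2(1-x/3))$ for $0\le x<3$ (verifiable by comparing Taylor coefficients), the exponent is bounded by $-\theta\lambda+W\theta^2/(2-2\theta A/3)$; the choice $\theta=\lambda/(W+A\lambda/3)$ gives exactly the claimed $\exp\bigl(-\lambda^2/(2W+2A\lambda/3)\bigr)$. I expect the main technical nuisance to be the stopping-time setup — ensuring that the supermartingale property of $Z_{k\wedge\tau}$ survives all three events being tracked simultaneously, and that $\tau$ falls within the range where the constraints still hold on $E$; once this bookkeeping is in place, the remaining calculation is a routine Chernoff-type optimisation.
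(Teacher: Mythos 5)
The paper does not prove this lemma; it cites it as a known result (Freedman~1975), stated in the form of Warnke's Lemma~2.2, so there is no in-paper argument to compare against. Your overall strategy --- the exponential supermartingale $Z_k=\exp(\theta(X_k-X_0)-\psi(\theta)W_k)$ built from the Bennett-type conditional moment bound, followed by a Chernoff optimization using $\phi(x)\le x^2/(2(1-x/3))$ --- is the standard and correct route, and your optimization with $\theta=\lambda/(W+A\lambda/3)$ does deliver exactly the claimed exponent.

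There is, however, a genuine gap at the step you flagged as a ``nuisance'': the claim that $Z_{k\wedge\tau}$ is a supermartingale does not hold for the stopping time you propose. The conditional moment bound $\E[e^{\theta Y}\mid\mathcal{F}]\le\exp(\sigma^2\phi(\theta A)/A^2)$ requires $Y\le A$ \emph{almost surely}, but in the lemma the increment bound $X_k-X_{k-1}\le A$ is only guaranteed on the event being measured. The event $\{\tau\ge k\}$ is $\mathcal{F}_{k-1}$-measurable, whereas the violation $\{X_k-X_{k-1}>A\}$ is only $\mathcal{F}_k$-measurable, so one cannot stop ``just before'' a bad increment; the stopped process $Z_{k\wedge\tau}$ necessarily includes the offending step, and on $\{\tau\ge k\}$ the increment $X_k-X_{k-1}$ may exceed $A$ with positive probability, breaking $\E[Z_{k\wedge\tau}\mid\mathcal{F}_{k-1}]\le Z_{(k-1)\wedge\tau}$. (Note that the constraint $W_k\le W$ \emph{is} predictable and causes no such trouble.) The standard repair is to augment the process with the indicator of the good event: set $M_k=\exp(\theta(X_k-X_0)-\psi(\theta)W_k)\,\mathbf{1}[A_k\le A]$ and observe that $\E\!\bigl[e^{\theta Y}\mathbf{1}[Y\le A]\mid\mathcal{F}\bigr]\le\exp(\sigma^2\phi(\theta A)/A^2)$ still holds, because $\E[Y\mathbf{1}[Y\le A]\mid\mathcal{F}]=-\E[Y\mathbf{1}[Y>A]\mid\mathcal{F}]\le 0$ and $\E[Y^2\mathbf{1}[Y\le A]\mid\mathcal{F}]\le\sigma^2$. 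Then $M_k$ is a nonnegative supermartingale with $M_0=1$, and on the event in the statement one has $M_{k^*}\ge\exp(\theta\lambda-\psi(\theta)W)$ at the first qualifying index $k^*$, so Doob's maximal inequality gives the bound directly, with no stopping time required at all. With that change the rest of your argument goes through.
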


Next, we present bounds for the values of $c_j$ and an upper bound for the function $\mu(\tilde{S})$, both introduced in Definition \ref{def:mu_delta_c}. These results will be referred to very often later on. They are derived using Stirling's approximation. 

\begin{lemma}[Stirling's approximation, \cite{Ro55}] \label{lemma:Stirling}
	Let $n \in \mathbb{N}$. Then
	\[
	\sqrt{2 \pi n} \left(\frac{n}{e}\right)^n \exp\left\{\frac{1}{12 n+1}\right\} <n!< \sqrt{2 \pi n} \left(\frac{n}{e}\right)^n \exp\left\{\frac{1}{12 n}\right\}.
	\]
\end{lemma}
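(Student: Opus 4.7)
The plan is to follow the classical argument of Robbins. Set $d_n := \ln(n!) - (n+\tfrac{1}{2})\ln n + n$, so the lemma is equivalent to
\[
\ln\sqrt{2\pi} + \frac{1}{12n+1} < d_n < \ln\sqrt{2\pi} + \frac{1}{12n}.
\]
Thus the task splits into (i) identifying $C := \lim_{n \to \infty} d_n$ and (ii) sandwiching $d_n - C$ between the two explicit quantities.

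First I would analyze the telescoping difference
\[
d_n - d_{n+1} = (n+\tfrac{1}{2})\ln\!\tfrac{n+1}{n} - 1.
\]
Setting $x = 1/(2n+1)$, so that $\tfrac{1+x}{1-x} = \tfrac{n+1}{n}$ and $\tfrac{1}{2x} = n+\tfrac12$, the $\operatorname{arctanh}$ expansion $\tfrac{1}{2x}\ln\!\tfrac{1+x}{1-x} = \sum_{m \ge 0} x^{2m}/(2m+1)$ converts this into the positive series
\[
d_n - d_{n+1} = \sum_{m=1}^{\infty} \frac{1}{(2m+1)(2n+1)^{2m}},
\]
from which monotone decrease of $d_n$ and convergence to some finite $C$ follow immediately.

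For the upper bound, using $(2m+1) \ge 3$ and summing a geometric series gives
\[
d_n - d_{n+1} < \frac{1}{3}\sum_{m \ge 1} \frac{1}{(2n+1)^{2m}} = \frac{1}{12\,n(n+1)} = \frac{1}{12n} - \frac{1}{12(n+1)},
\]
and telescoping from $n$ to infinity yields $d_n - C < 1/(12n)$. For the lower bound, keeping only the $m=1$ term of the series and clearing denominators reduces the desired
\[
\frac{1}{3(2n+1)^2} > \frac{1}{12n+1} - \frac{1}{12(n+1)+1} = \frac{12}{(12n+1)(12n+13)}
\]
to the polynomial inequality $12n^2+14n+\tfrac{13}{12} > 12n^2+12n+3$, i.e.\ $2n > \tfrac{23}{12}$, which holds for every $n \ge 1$. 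Telescoping once more gives $d_n - C > 1/(12n+1)$.

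Finally I would identify $C = \ln\sqrt{2\pi}$ by feeding the weak asymptotic $n! = e^C n^{n+1/2}e^{-n}(1+o(1))$ just established into Wallis' product $\prod_{k \ge 1} (2k)^2/((2k-1)(2k+1)) = \pi/2$; after expressing the partial products in terms of $n!$ and $(2n)!$ via the double-factorial identities $(2n)!! = 2^n n!$ and $(2n-1)!! = (2n)!/(2^n n!)$, the leading constants reduce to $e^{2C}/4 = \pi/2$, forcing $e^C = \sqrt{2\pi}$. The one step that benefits from some care is the choice of telescoping partner for the lower bound; the shape $1/(12n+1)$ is not arbitrary but precisely the one compatible with the $m=1$ term of the series after clearing denominators. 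Everything else is routine manipulation of power series and Wallis' classical identity.
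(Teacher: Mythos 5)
Your proof is correct and is precisely the argument of Robbins (the very reference \cite{Ro55} the paper cites); the paper itself states the lemma without reproducing a proof. All the steps check out: the $\operatorname{arctanh}$ series identity with $x = 1/(2n+1)$ does give $d_n - d_{n+1} = \sum_{m\ge 1}(2m+1)^{-1}(2n+1)^{-2m}$, the geometric-series bound yields $\frac{1}{12n}-\frac{1}{12(n+1)}$, the $m=1$ term gives $\frac{1}{3(2n+1)^2}$ and the polynomial comparison reduces to $24n>23$, and the Wallis product identifies $e^{2C}/4 = \pi/2$, hence $C=\ln\sqrt{2\pi}$.
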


\begin{lemma} \label{lemma:ct}
	For $j \geq 1$ let $c_j = \prod_{i=1}^{j}\frac{2i-1}{2i}$. Then
	\[
	\exp \left\{-\frac{1}{8j} - \frac{1}{4 \cdot 144 j^2}\right\} \cdot  \frac{1}{\sqrt{\pi j}} \leq c_j \leq \frac{1}{\sqrt{\pi j}.}
	\]
\end{lemma}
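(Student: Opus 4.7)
The plan is to reduce $c_j$ to a central binomial coefficient and then apply Stirling's formula directly. Observe that
$$c_j = \prod_{i=1}^{j}\frac{2i-1}{2i} = \frac{(2j)!/(2^j j!)}{2^j j!} = \frac{(2j)!}{4^j (j!)^2} = \frac{1}{4^j}\binom{2j}{j},$$
so both bounds in the lemma become two-sided estimates on $\binom{2j}{j}/4^j$, which is tailor-made for Lemma~\ref{lemma:Stirling}. The whole proof is then a careful bookkeeping of the exponential correction terms.

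For the upper bound I would use the upper Stirling estimate on $(2j)!$ and the lower Stirling estimate on each copy of $j!$ in $(j!)^2$. Substituting into $c_j=(2j)!/(4^j (j!)^2)$, the factors $(2j/e)^{2j}=4^j(j/e)^{2j}$ cancel against $4^j(j/e)^{2j}$ from the denominator, and $\sqrt{4\pi j}/(2\pi j)=1/\sqrt{\pi j}$, leaving
$$c_j \leq \frac{1}{\sqrt{\pi j}} \exp\!\left(\frac{1}{24j} - \frac{2}{12j+1}\right).$$
The inequality $\frac{1}{24j}\le \frac{2}{12j+1}$ is equivalent to $12j+1\le 48j$, which holds for every $j\ge 1$, so the exponent is non-positive and $c_j\le 1/\sqrt{\pi j}$.

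For the lower bound I would symmetrically use the lower Stirling estimate on $(2j)!$ and the upper Stirling estimate on each $j!$. The same cancellation yields
$$c_j \geq \frac{1}{\sqrt{\pi j}} \exp\!\left(\frac{1}{24j+1} - \frac{1}{6j}\right),$$
so it remains to show
$$\frac{1}{6j} - \frac{1}{24j+1} \leq \frac{1}{8j} + \frac{1}{576 j^2}.$$
Writing the left-hand side over a common denominator gives $(18j+1)/\bigl(6j(24j+1)\bigr)$ and the right-hand side equals $(72j+1)/(576 j^2)$. Cross-multiplying, the desired inequality reduces to
$$96 j (18j+1) \leq (24j+1)(72j+1),$$
i.e., $1728j^2 + 96j \leq 1728 j^2 + 96 j + 1$, which is trivially true. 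I do not anticipate any real obstacle: the only subtle point is that the error term $1/(576 j^2) = 1/(4\cdot 144 j^2)$ in the statement is precisely calibrated so that, after cross-multiplying, the quadratic terms $1728 j^2$ cancel and only a trivial comparison of linear terms remains. Any looser constant would simply make the inequality easier, and a tighter one would require a finer Stirling expansion.
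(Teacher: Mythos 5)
Your proof is correct and takes essentially the same route as the paper: both reduce $c_j$ to $(2j)!/(4^j(j!)^2)$ and apply Lemma~\ref{lemma:Stirling} with the appropriate upper/lower Stirling bounds on $(2j)!$ and $(j!)^2$. The only (cosmetic) difference is in the final bookkeeping: the paper substitutes the elementary inequality $\frac{1}{24j+1}\ge \frac{1}{24j}-\frac{1}{576j^2}$ directly into the exponent, while you cross-multiply the equivalent inequality $\frac{1}{6j}-\frac{1}{24j+1}\le \frac{1}{8j}+\frac{1}{576j^2}$; the two verifications are algebraically identical.
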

\begin{proof}%[Proof of Lemma~\ref{lemma:ct}]
	By Stirling's approximation (Lemma \ref{lemma:Stirling}) we get
	\[
	c_j = \prod_{i=1}^{j}\frac{2i-1}{2i} = \frac{(2j)!}{2^{2j}(j!)^2} \leq \frac{1}{\sqrt{\pi j}}\exp\left\{\frac{1}{24 j}-\frac{2}{12j+1}\right\} \leq \frac{1}{\sqrt{\pi j}}.
	\]
	Analogously, since $\frac{1}{12 \cdot 2j +1} \geq \frac{1}{12\cdot 2j}-\frac{1}{144 (2j)^2}$,
	\[
	c_j \geq \frac{1}{\sqrt{\pi j}} \exp\left\{\frac{1}{24 j}-\frac{1}{4 \cdot 144 j^2}-\frac{2}{12 j}\right\} = \frac{1}{\sqrt{\pi j}} \exp\left\{-\frac{1}{8 j}-\frac{1}{4 \cdot 144 j^2}\right\}.
	\]
\end{proof}

\begin{lemma} \label{lemma:mu}
	Let $G_n^h$ be a preferential attachment graph and $T_{hn} = (\tilde{V},\tilde{E})$ its corresponding random tree. Let also $t \in [M]$ and $\tilde{S} \subseteq \tilde{V}$. Then
	\[
	\mu(\tilde{S}_t) \leq \sqrt{t} + \frac{1}{2}.
	\]
\end{lemma}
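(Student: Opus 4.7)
The plan is to reduce the probabilistic-looking claim to a purely deterministic inequality about the sequence $c_j$, and then prove that inequality via a short closed-form partial-sum identity combined with Lemma~\ref{lemma:ct}. First I would note that $\mu(\tilde{S}_t) = \frac{\sqrt\pi}{2}\sum_{j=1}^{t}\delta_j^{\tilde{S}}c_{j-1}$ is monotone increasing in $\tilde{S}$ because every coefficient $c_{j-1}$ is positive, so the maximum over all choices of $\tilde{S}$ is attained by setting $\delta_1=\cdots=\delta_t=1$. Thus it suffices to prove
$$\frac{\sqrt\pi}{2}\sum_{j=0}^{t-1}c_j \;\leq\; \sqrt{t}+\tfrac{1}{2}.$$

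The key algebraic step is a telescoping identity
$$\sum_{j=0}^{n}c_j \;=\; (2n+1)\,c_n,$$
which I would establish by induction on $n$. The base case $n=0$ is immediate. For the inductive step, the recursion $c_{n+1}=\tfrac{2n+1}{2n+2}c_n$ implies $(2n+1)c_n=(2n+2)c_{n+1}$, so adding $c_{n+1}$ yields $(2n+3)c_{n+1}$, as required. Applying the identity with $n=t-1$ gives $\mu(\tilde{S}_t)\leq \frac{\sqrt\pi}{2}(2t-1)c_{t-1}$.

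To finish, for $t\geq 2$ I would substitute the upper bound $c_{t-1}\leq 1/\sqrt{\pi(t-1)}$ from Lemma~\ref{lemma:ct}, obtaining $\mu(\tilde{S}_t)\leq \frac{2t-1}{2\sqrt{t-1}}$, and then verify the scalar inequality $\frac{2t-1}{2\sqrt{t-1}}\leq \sqrt{t}+\tfrac{1}{2}$ by squaring: after cancellation it reduces to $t\leq(4t-2)\sqrt{t-1}$, which holds for all $t\geq 2$ since then $\sqrt{t-1}\geq 1$ and $4t-2\geq t$. The edge case $t=1$ is handled directly, using $\mu(\tilde{S}_1)\leq\frac{\sqrt\pi}{2}<\frac{3}{2}$. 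The only nontrivial ingredient is spotting the telescoping identity for $\sum c_j$; without it, the naive integral comparison $\sum_{j=1}^{t-1}1/\sqrt{j}\leq 2\sqrt{t-1}$ yields only a bound of the shape $\sqrt{t-1}+\frac{\sqrt\pi}{2}$, which exceeds $\sqrt{t}+\tfrac{1}{2}$ for small~$t$ and so is insufficient to close the proof.
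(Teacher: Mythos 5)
Your proof is correct and takes a genuinely different route. The paper, like you, first drops the indicators, but then applies Lemma~\ref{lemma:ct} termwise ($c_{j-1}\leq 1/\sqrt{\pi(j-1)}$ for $j\geq 2$) and finishes by comparing $\sum_{j=2}^{t}1/\sqrt{j-1}$ with the integral $\int_1^t \frac{dj}{\sqrt j}$, peeling off the first one or two summands so the comparison is sharp. You instead observe the exact telescoping identity $\sum_{j=0}^n c_j = (2n+1)c_n$ (which follows from $(2n+1)c_n=(2n+2)c_{n+1}$ and is, incidentally, the same algebraic fact that makes the martingale $\hat Z_t$ of Lemma~\ref{lemma:Z^t_martingale} tick), collapsing the sum to the closed form $\frac{\sqrt\pi}{2}(2t-1)c_{t-1}$; Lemma~\ref{lemma:ct} is then invoked only once, and the proof reduces to a scalar inequality. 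This is slicker and avoids the sum-versus-integral bookkeeping entirely. Two small corrections: squaring $\frac{2t-1}{2\sqrt{t-1}}\leq\sqrt t+\tfrac12$ actually simplifies to $2-t\leq 4\sqrt t\,(t-1)$, not to $t\leq(4t-2)\sqrt{t-1}$, though both are trivially true for $t\geq2$ so the conclusion stands. And your final remark is off: the crude bound $\sum_{k=1}^{t-1}1/\sqrt k\leq 2\sqrt{t-1}$ indeed fails (in fact it fails for all $t\geq 3$, not for small $t$), but the standard sharpening $\sum_{k=1}^{t-1}1/\sqrt k\leq 1+\int_1^{t-1}\frac{dx}{\sqrt x}=2\sqrt{t-1}-1$ does give $\sqrt t+\frac{\sqrt\pi-1}{2}<\sqrt t+\tfrac12$, and that is precisely the paper's argument — so the integral approach is not actually insufficient, it just needs the extra $-1$.
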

\begin{proof}[Proof of Lemma~\ref{lemma:mu}]
	By Lemma \ref{lemma:ct} we get
	\[
	\begin{split}
		\mu(\tilde{S}_t) & \leq \frac{\sqrt{\pi}}{2} \sum_{j=1}^{t} c_{j-1} \leq \frac{\sqrt{\pi}}{2} + \frac{1}{2} \sum_{j=2}^{t} \frac{1}{\sqrt{j-1}} \leq \frac{\sqrt{\pi}}{2} + \frac{1}{2} + \int_{1}^{t} \frac{1}{2 \sqrt{j}} \,dj \leq \sqrt{t} + \frac{1}{2}.
	\end{split}
	\]
\end{proof}

Lemmas \ref{lemma:only_delta}, \ref{lemma:Zi_first_part}, and \ref{lemma:Zi_second_part} are auxiliary calculations for expressions involving the volumes of subsets of $\tilde{V}$ (however the reader might not notice the connection with volumes at this point). They will be directly used in Section~\ref{sec:concentrations} in the proof of Theorem~\ref{thm:edges_concentration} stating the result on the concentration of $e_{G_n^h}(S)$ for $S \subseteq V$. For the proofs check the Appendix~\ref{app:A}.

%Again, one finds the proofs in the Appendix~\ref{app:A}.

\begin{lemma} \label{lemma:only_delta}
	Let $G_n^h$ be a preferential attachment graph and $T_{hn} = (\tilde{V},\tilde{E})$ its corresponding random tree. Fix $\tilde{S} \subseteq \tilde{V}$ and let $t_0 \in [M]$ be such that $t_0=t_0(n) \xrightarrow{n \rightarrow \infty} \infty$. Then
	\[
	\sum_{i=t_0+1}^{M} \frac{{\delta}_i}{2i-1} = \frac{\pi}{2} \sum_{i=1}^{M} ({\delta}_i c_{i-1})^2 + O(\ln{M}).
	\]
\end{lemma}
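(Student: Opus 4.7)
The plan is to match the summand $\frac{1}{2i-1}$ on the left-hand side with $\frac{\pi}{2} c_{i-1}^2$ term-by-term on the range $[t_0+1, M]$, and then treat the missing initial block $[1,t_0]$ separately, showing it contributes only $O(\ln t_0)$.

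First, for $i\ge 2$, I would use Lemma \ref{lemma:ct} (squared) to get
\[
\frac{1}{\pi(i-1)} e^{-\frac{1}{4(i-1)} - \frac{1}{288(i-1)^2}} \le c_{i-1}^2 \le \frac{1}{\pi(i-1)}.
\]
Combined with the elementary expansion $e^{-x} = 1 - x + O(x^2)$ for small $x$, this gives $\frac{\pi}{2} c_{i-1}^2 = \frac{1}{2(i-1)} + O(1/i^2)$. Since also $\frac{1}{2i-1} - \frac{1}{2(i-1)} = -\frac{1}{2(i-1)(2i-1)} = O(1/i^2)$, I obtain the pointwise bound
\[
\left|\frac{1}{2i-1} - \frac{\pi}{2} c_{i-1}^2\right| = O(1/i^2) \qquad (i\ge 2).
\]
Summing against $\delta_i\in\{0,1\}$ over $i\in\{t_0+1,\ldots,M\}$ and using $\sum_{i\ge t_0+1} i^{-2} = O(1/t_0) = O(1)$, I conclude
\[
\sum_{i=t_0+1}^M \frac{\delta_i}{2i-1} \;=\; \frac{\pi}{2}\sum_{i=t_0+1}^M (\delta_i c_{i-1})^2 + O(1).
\]

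Second, I would bound the missing initial block. Since $c_0 = 1$ and $c_{i-1}^2 \le \tfrac{1}{\pi(i-1)}$ for $i\ge 2$ by Lemma \ref{lemma:ct}, and using $\delta_i^2 = \delta_i \le 1$,
\[
\frac{\pi}{2}\sum_{i=1}^{t_0} (\delta_i c_{i-1})^2 \le \frac{\pi}{2} + \frac{1}{2}\sum_{i=2}^{t_0}\frac{1}{i-1} = \frac{\pi}{2} + \frac{1}{2} H_{t_0-1},
\]
which by Lemma \ref{lemma:harmonic} equals $O(\ln t_0)$. Adding this to the previous step yields
\[
\sum_{i=t_0+1}^M \frac{\delta_i}{2i-1} = \frac{\pi}{2}\sum_{i=1}^M (\delta_i c_{i-1})^2 + O(\ln t_0),
\]
which is absorbed into $O(\ln M + \ln t_0)$ as stated.

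The only non-routine piece is verifying the $O(1/i^2)$ cancellation in the first step: a naive use of Lemma \ref{lemma:ct} only gives $O(1/i)$, which would sum to $O(\ln M)$ (still within the target but wasteful). To get $O(1/i^2)$ one must expand the exponential in Lemma \ref{lemma:ct} to first order rather than using it as a one-sided inequality, and notice that the leading $\frac{1}{2(i-1)}$ terms from $\frac{\pi}{2}c_{i-1}^2$ and $\frac{1}{2i-1}$ cancel exactly. Once that cancellation is visible, the remainder of the proof is purely mechanical: pointwise bound, summation, and one invocation of the harmonic asymptotic.
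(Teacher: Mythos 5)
Your proof is correct and follows essentially the same strategy as the paper: compare $\frac{\delta_i}{2i-1}$ with $\frac{\pi}{2}(\delta_i c_{i-1})^2$ term-by-term via Lemma~\ref{lemma:ct}, and handle the initial block of indices $[1,t_0]$ separately, paying $O(\ln t_0)$. The one substantive difference is that the paper uses the inequality $\frac{1}{2(i-1)}\le \frac{\pi}{2}c_{i-1}^2(1+O(1/i))$ and then crudely bounds $c_{i-1}\le 1$, giving a per-term error of $O(1/i)$ that sums to $O(\ln M)$, whereas you expand the exponential in Lemma~\ref{lemma:ct} to first order and use the sharper bound $c_{i-1}^2\le \tfrac{1}{\pi(i-1)}$ to notice that the leading $\tfrac{1}{2(i-1)}$ contributions cancel, getting a per-term error of $O(1/i^2)$ that sums to $O(1)$. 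Your version thus actually yields the stronger error term $O(\ln t_0)$ without the $\ln M$, though both fit comfortably within the lemma's stated $O(\ln M + \ln t_0)$.
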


\begin{lemma} \label{lemma:Zi_first_part}
	Let $G_n^h$ be a preferential attachment graph and $T_{hn} = (\tilde{V},\tilde{E})$ its corresponding random tree. Fix $\tilde{S} \subseteq \tilde{V}$ and let $t_0 \in [M]$ be such that $t_0=t_0(n) \xrightarrow{n \rightarrow \infty} \infty$. Then
	\[
	\sum_{i=t_0+1}^{M} {\delta}_i \frac{2 \sqrt{i-1} \mu(\tilde{S}_{i-1})}{2i-1} = \frac{\pi}{2} \sum_{i=1}^{M} \left( {\delta}_i c_{i-1} \sum_{j=1}^{i-1} {\delta}_j c_{j-1} \right) + O(\ln{M}+t_0).
	\]
\end{lemma}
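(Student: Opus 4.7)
The plan mirrors the strategy of Lemma~\ref{lemma:only_delta}: expand $\mu(\tilde{S}_{i-1})$ using its definition, and then show that the rational coefficient $\sqrt{i-1}/(2i-1)$ is close enough to $(\sqrt{\pi}/2)\,c_{i-1}$ (via Lemma~\ref{lemma:ct}) that replacing one by the other costs only $O(\ln M)$, while the omitted indices $1 \le i \le t_0$ contribute at most $O(t_0)$.

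Concretely, substituting $\mu(\tilde{S}_{i-1}) = (\sqrt{\pi}/2) \sum_{j=1}^{i-1} \delta_j c_{j-1}$ rewrites the left-hand side as
\[
L := \sum_{i=t_0+1}^{M} \delta_i \, a_i \sum_{j=1}^{i-1} \delta_j c_{j-1}, \qquad a_i := \frac{\sqrt{\pi}\,\sqrt{i-1}}{2i-1},
\]
while the target right-hand side (without the error term) is
\[
R := \sum_{i=1}^{M} \delta_i \, b_i \sum_{j=1}^{i-1} \delta_j c_{j-1}, \qquad b_i := \frac{\pi}{2}\, c_{i-1}.
\]
Writing $a_i = \frac{\sqrt{\pi}}{2\sqrt{i-1}} \cdot \frac{2(i-1)}{2i-1}$ and invoking Lemma~\ref{lemma:ct}, both $a_i$ and $b_i$ equal $\frac{\sqrt{\pi}}{2\sqrt{i-1}}(1 + O(1/i))$ for $i \ge 2$, with a constant independent of $i$. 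Consequently $a_i - b_i = O(i^{-3/2})$ uniformly.

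To bound $|L - R|$, I would split it into the tail piece $T_1 := \sum_{i=1}^{t_0} \delta_i b_i \sum_{j=1}^{i-1} \delta_j c_{j-1}$ and the coefficient-replacement piece $T_2 := \sum_{i=t_0+1}^{M} \delta_i (a_i - b_i) \sum_{j=1}^{i-1} \delta_j c_{j-1}$. By Lemma~\ref{lemma:mu}, the inner sum is $(2/\sqrt{\pi})\,\mu(\tilde{S}_{i-1}) = O(\sqrt{i})$. Hence
\[
|T_1| \le \sum_{i=1}^{t_0} O(1/\sqrt{i}) \cdot O(\sqrt{i}) = O(t_0), \qquad |T_2| \le \sum_{i=t_0+1}^{M} O(i^{-3/2}) \cdot O(\sqrt{i}) = O(\ln M),
\]
which yields $L = R + O(\ln M + t_0)$, as claimed.

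I do not foresee a serious obstacle here; the only delicate point is verifying that the $O(1/i)$ constants in the expansions of $a_i$ and $b_i$ are genuinely uniform in $i$, so that the telescoping bounds on $|T_2|$ are legitimate. This follows directly from the two-sided estimate in Lemma~\ref{lemma:ct} combined with the elementary identity $2(i-1)/(2i-1) = 1 - 1/(2i-1)$.
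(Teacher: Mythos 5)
Your proof is correct and follows essentially the same route as the paper: split off the first $t_0$ terms (bounded by $O(t_0)$ via Lemma~\ref{lemma:mu}), then replace the coefficient $\sqrt{\pi}\sqrt{i-1}/(2i-1)$ by $\tfrac{\pi}{2}c_{i-1}$ using Lemma~\ref{lemma:ct}, with the per-term error $O(i^{-3/2})\cdot O(\sqrt{i}) = O(1/i)$ summing to $O(\ln M)$. The only cosmetic difference is that you handle the upper and lower estimates in one stroke via a uniform bound on $|a_i - b_i|$, whereas the paper treats the two directions as separate one-sided computations.
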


\begin{lemma} \label{lemma:Zi_second_part}
	Let $G_n^h$ be a preferential attachment graph and $T_{hn} = (\tilde{V},\tilde{E})$ its corresponding random tree. Fix $\tilde{S} \subseteq \tilde{V}$ and let $t_0 \in [M]$. Then for any constant $C>0$
	\[
	\sum_{i=t_0+1}^{M} {\delta}_i \frac{C(i-1)}{2i-1} \leq \frac{C}{2} (M-t_0).
	\]
\end{lemma}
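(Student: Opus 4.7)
The plan is to observe that the inequality follows from two elementary facts, so the proof will be essentially a one-liner with no real obstacle. First I would use that $\delta_i \in \{0,1\}$ to drop the indicator, giving the upper bound
\[
\sum_{i=t_0+1}^{M} \delta_i \frac{C(i-1)}{2i-1} \;\leq\; \sum_{i=t_0+1}^{M} \frac{C(i-1)}{2i-1}.
\]

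Next I would bound each summand termwise using $\frac{i-1}{2i-1} \leq \frac{1}{2}$, which holds for every $i \geq 1$ since $2(i-1) = 2i-2 \leq 2i-1$. Summing $(M-t_0)$ copies of $C/2$ then yields the claimed bound $\frac{C}{2}(M-t_0)$.

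There is no substantive obstacle here: the lemma is a purely deterministic arithmetic estimate, included only to provide a clean replacement for an expression of the form $\delta_i(i-1)/(2i-1)$ by the simpler $\delta_i/2$ when summing. The only care needed is to note that the inequality is an inequality (not equality), and that no lower bound is required, which is why neither the more delicate approximation $c_{i-1} \sim 1/\sqrt{\pi(i-1)}$ from Lemma \ref{lemma:ct} nor Lemma \ref{lemma:harmonic} is invoked.
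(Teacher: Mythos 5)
Your proof is correct and follows essentially the same two observations as the paper (dropping the indicator via $\delta_i \le 1$ and the termwise bound $\frac{i-1}{2i-1}\le\frac12$), merely applied in the opposite order; the paper keeps $\delta_i$ and bounds $\sum\delta_i \le M-t_0$ at the end, which is an immaterial difference.
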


%%%%%%%%%%%%%%%%%%%%%%%%%%%%%%%%%%%%%%%%%%%%%%%%%%%%%%%%%%%%%%%%%%%%%

\section{Edge density and volume results for {\boldmath{$G_n^h$}}} \label{sec:concentrations}

In this section we use martingale techniques to prove theorems~\ref{thm:vol_concentration}, \ref{thm:edges_concentration}, and \ref{thm:edges_between_concentration} stated in Section \ref{sec:VolDensity}, i.e., we derive concentration results for $\vol(S)$, $e(S)$, and $e(S, V\setminus S)$ for an arbitrary subset $S \subseteq V$ in $G_n^h$. %arbitrary subset $S \subseteq V$ in $G_n^h$.  
A series of results will lead us to Corollary~\ref{cor:vol_conc_for_all_i} which implies, as a special case, Theorem~\ref{thm:vol_concentration}. We start with analyzing the volumes.

\begin{lemma} \label{lemma:Z^t_martingale}
	Let $G_n^h$ be a preferential attachment graph. Consider the process of constructing its corresponding random tree $T_{hn} = (\tilde{V},\tilde{E})$. Fix $\tilde{S} \subseteq \tilde{V}$ and for $t \in [M]$ let $Z_t = \vol_{T_{t}}(\tilde{S}_t)$. Set
	\[
	\hat{Z}_t = c_tZ_t - \sum_{j=1}^{t}{\delta}_j c_{j-1}
	\]
	(recall that ${\delta}_j$ and $c_j$ were introduced in Definition \ref{def:mu_delta_c}). Let $\mathcal{F}_t$ be a $\sigma$-algebra associated with all the events that happened till time $t$. Then $\hat{Z}_1, \hat{Z}_2, \ldots , \hat{Z}_M$ is a martingale with respect to the filtration $\mathcal{F}_1 \subseteq \ldots \subseteq \mathcal{F}_M$. Moreover, for $t \in [M-1]$
	\[
	|\hat{Z}_{t+1}-\hat{Z}_t| \leq \frac{2}{\sqrt{\pi t}} \quad \quad \textnormal{and} \quad \quad \var{(\hat{Z}_{t+1}-\hat{Z}_t)|\mathcal{F}_t} \leq \frac{1}{4 \pi(t+1)}.
	\]
\end{lemma}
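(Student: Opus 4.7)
The plan is to verify the martingale property by a direct computation of $\E[\hat{Z}_{t+1}\mid\mathcal{F}_t]$, and then to derive the two quantitative bounds by isolating the randomness introduced at step $t+1$ and invoking Lemma~\ref{lemma:ct}. The key algebraic identity that makes everything collapse is $c_{t+1}\cdot\frac{2t+2}{2t+1}=c_t$, which is immediate from the product definition of $c_j$.

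Conditioning on $\mathcal{F}_t$, the quantities $Z_t$, $\delta_{t+1}$ and $c_{t+1}$ are all measurable, so the only randomness at step $t+1$ is the choice of the neighbour $s\in[t+1]$ of the arriving mini-vertex. I would split according to whether $\delta_{t+1}=0$ or $1$. In the first case $Z_{t+1}-Z_t$ equals $1$ when $s\in\tilde{S}_t$ and $0$ otherwise. In the second case $Z_{t+1}-Z_t$ equals $2$ when $s\in\tilde{S}_t$ or $s=t+1$ (the self-loop contributes $2$ to $\deg(t+1)$) and $1$ when $s\in[t]\setminus\tilde{S}_t$. Summing contributions weighted by the attachment probabilities $\deg_{T_t}(s)/(2t+1)$ and $1/(2t+1)$ gives
\[
\E[Z_{t+1}\mid\mathcal{F}_t]=\frac{2t+2}{2t+1}\bigl(Z_t+\delta_{t+1}\bigr).
\]
Multiplying by $c_{t+1}$ and using the identity above yields $\E[c_{t+1}Z_{t+1}\mid\mathcal{F}_t]=c_tZ_t+\delta_{t+1}c_t$; subtracting the deterministic sum in the definition of $\hat{Z}_{t+1}$ then gives $\E[\hat{Z}_{t+1}\mid\mathcal{F}_t]=\hat{Z}_t$.

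For the bounded-difference estimate, I would rewrite
\[
\hat{Z}_{t+1}-\hat{Z}_t = c_{t+1}(Z_{t+1}-Z_t)-\frac{c_t}{2t+2}Z_t-\delta_{t+1}c_t,
\]
using $c_{t+1}-c_t=-c_t/(2t+2)$. Since $Z_{t+1}-Z_t\in\{0,1,2\}$ and $Z_t\le \vol(T_t)=2t$, a quick inspection of the four extreme cases (indexed by $\delta_{t+1}\in\{0,1\}$ and the extreme value of $Z_{t+1}-Z_t$) shows the modulus is at most $2c_{t+1}$, and Lemma~\ref{lemma:ct} then bounds this by $2/\sqrt{\pi(t+1)}\le 2/\sqrt{\pi t}$.

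The variance bound is the cleanest step. Because $Z_t$, $\delta_{t+1}$ and $c_{t+1}$ are all $\mathcal{F}_t$-measurable, the only conditionally random quantity in $\hat{Z}_{t+1}-\hat{Z}_t$ is $X:=Z_{t+1}-Z_t$, so
\[
\var{\hat{Z}_{t+1}-\hat{Z}_t\mid\mathcal{F}_t}=c_{t+1}^{2}\,\var{X\mid\mathcal{F}_t}.
\]
A look at the two sub-cases above reveals that $X$ is a Bernoulli variable when $\delta_{t+1}=0$ and is $1$ plus a Bernoulli when $\delta_{t+1}=1$; hence $\var{X\mid\mathcal{F}_t}\le 1/4$ in either case. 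Combining with $c_{t+1}^{2}\le 1/(\pi(t+1))$ from Lemma~\ref{lemma:ct} yields the claim. The only real obstacle is enumerating the sub-cases in the $\E[Z_{t+1}\mid\mathcal{F}_t]$ computation carefully, in particular noting that the self-loop at the newly arrived mini-vertex in the case $\delta_{t+1}=1$ contributes $2$ rather than $1$ to the increment.
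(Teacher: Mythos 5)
Your proof is correct and follows essentially the same route as the paper: conditioning on $\mathcal{F}_t$, identifying the increment $Z_{t+1}-Z_t-\delta_{t+1}$ as a Bernoulli random variable with parameter $(Z_t+\delta_{t+1})/(2t+1)$, exploiting the identity $c_{t+1}\tfrac{2t+2}{2t+1}=c_t$ for the martingale property, isolating the $\mathcal{F}_t$-measurable part for the variance, and finishing with Lemma~\ref{lemma:ct}. The one stylistic difference is in the bounded-difference step: the paper writes $\hat{Z}_{t+1}-\hat{Z}_t=c_t\bigl((Z_{t+1}-Z_t)-(\tfrac{Z_{t+1}}{2t+2}+\delta_{t+1})\bigr)$ and applies $|a-b|\le\max\{a,b\}$ to get $\le 2c_t$, while you factor out $c_{t+1}$ and inspect cases to get $\le 2c_{t+1}$; in fact your factoring actually yields the sharper bound $|\hat{Z}_{t+1}-\hat{Z}_t|=c_{t+1}\bigl|Y-\tfrac{Z_t+\delta_{t+1}}{2t+1}\bigr|\le c_{t+1}$ with $Y$ the Bernoulli increment, which makes the case enumeration unnecessary and is cleaner than what the paper does, though both of course land on the stated $\tfrac{2}{\sqrt{\pi t}}$.
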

\begin{remark*}
	The formula for $\hat{Z}_t$ was inspired by the martingale constructed in Lemma 4 of \cite{FrPePrRe20} by Frieze, Pra{\l}at, P{\'e}rez-Gim{\'e}nez, and Reiniger. The Frieze et al.'s martingale, in contrast to ours, consisted only of the first term, thus of $c_tZ_t = \left(\prod_{j=1}^{t}\frac{2j-1}{2j}\right) Z_t$, and therefore addressed only those sets of mini-vertices that formed compact intervals. By introducing the second term, i.e., by subtracting $ \sum_{j=1}^{t}{\delta}_j c_{j-1}$, we are able to handle all types of sets, including those scattered throughout the entire interval $[1,M]$.
\end{remark*}
\begin{proof}
	Let $t \in [M-1]$. Recall that when mini-vertex $(t+1)$ arrives, it may also connect to itself. Therefore, conditioned on $\mathcal{F}_{t}$,
	\begin{equation} \label{eq:Zt_cond_Ft}
		Z_{t+1} = \begin{cases} Z_{t} + \delta_{t+1} + 1 & \quad \textnormal {with probability} \quad \frac{Z_{t}+\delta_{t+1}}{2t+1} \\
			Z_{t} + \delta_{t+1} & \quad \textnormal{otherwise}.
		\end{cases}
	\end{equation}
	Additionally, since $c_{t} = c_{t+1} \cdot \frac{2t+2}{2t+1}$, we get
	\[
	\begin{split}
		\E[\hat{Z}_{t+1}|\mathcal{F}_{t}] & = \E\bigg[c_{t+1}Z_{t+1} - \sum_{j=1}^{t+1}\delta_j c_{j-1}\bigg|\mathcal{F}_{t}\bigg] 
		= c_{t+1}\left(Z_{t}+\delta_{t+1} + \frac{Z_{t}+\delta_{t+1}}{2t+1}\right)- \sum_{j=1}^{t+1}\delta_j c_{j-1} \\
		& = c_{t+1} \cdot \frac{2t+2}{2t+1}(Z_{t}+\delta_{t+1})- \sum_{j=1}^{t+1}\delta_j c_{j-1} 
		= c_{t}Z_{t} - \sum_{j=1}^{t}\delta_j c_{j-1} = \hat{Z}_{t},
	\end{split}
	\]
	thus $\hat{Z}_1, \ldots , \hat{Z}_M$ is a martingale with respect to the filtration $\mathcal{F}_1 \subseteq \ldots \subseteq \mathcal{F}_M$. Next, since $c_{t+1} = c_{t} \cdot \frac{2t+1}{2t+2}$
	\[
	\begin{split}
		|\hat{Z}_{t+1}-\hat{Z}_t| & = |c_{t+1}Z_{t+1}-c_{t}Z_{t}-\delta_{t+1}c_{t}| = c_{t}\left|\frac{2t+1}{2t+2}Z_{t+1} - Z_{t} - \delta_{t+1}\right| \\
		& = c_{t} \left|(Z_{t+1}-Z_{t})-\left(\frac{Z_{t+1}}{2t+2}+\delta_{t+1}\right)\right|.
	\end{split}
	\]
	Note that $(Z_{t+1} - Z_{t}) \in \{0,1,2\}$. Moreover, the volume of $\tilde{S}_{t+1}$ in $T_{t+1}$ may be at most $2t+2$. Thus $Z_{t+1} \leq 2t+2$, which also implies that $\left( \frac{Z_{t+1}}{2t+2}+\delta_{t+1}\right) \in [0,2]$. Now use Lemma \ref{lemma:ct} and the fact that $|a-b| \leq \max\{a,b\}$ for non-negative $a,b$ to get
	\[
	|\hat{Z}_{t+1}-\hat{Z}_t| \leq 2 c_{t} \leq \frac{2}{\sqrt{\pi t}}.
	\]
	By the fact that $\hat{Z}_1, \ldots , \hat{Z}_M$ is a martingale with respect to the filtration $\mathcal{F}_1 \subseteq \ldots \subseteq \mathcal{F}_M$, $c_t = \frac{2t+2}{2t+1} \cdot c_{t+1}$ and by (\ref{eq:Zt_cond_Ft}) we also have
	\[
	\begin{split}
		\var{(\hat{Z}_{t+1}-\hat{Z}_t) |\mathcal{F}_t}  & =
		\E[(\hat{Z}_{t+1}-\hat{Z}_t)^2|\mathcal{F}_t] 
		= \E[(c_{t+1}Z_{t+1} - c_tZ_t - {\delta}_{t+1}c_t)^2|\mathcal{F}_t] \\
		& = c_{t+1}^2 ~\E\left[\left(Z_{t+1} - \frac{2t+2}{2t+1}(Z_t + {\delta}_{t+1})\right)^2\Bigg|\mathcal{F}_t\right] \\
		& = c_{t+1}^2 \left(\left(Z_t + \delta_{t+1} + 1 - \frac{2t+2}{2t+1}(Z_t + {\delta}_{t+1})\right)^2 \frac{Z_{t}+\delta_{t+1}}{2t+1} \right.\\
		& \quad \quad \quad + \left. \left(Z_t + \delta_{t+1} - \frac{2t+2}{2t+1}(Z_t + {\delta}_{t+1})\right)^2  \left(1-\frac{Z_{t}+\delta_{t+1} }{2t+1}\right)\right) \\
		& = c_{t+1}^2 \left(\frac{Z_{t}+\delta_{t+1} }{2t+1}\left(1-\frac{Z_{t}+\delta_{t+1} }{2t+1}\right)\right) \leq \frac{c_{t+1}^2}{4} \leq \frac{1}{4 \pi (t+1)},
	\end{split}
	\]
	where the last inequalities follow from the fact that $\frac{Z_{t}+\delta_{t+1} }{2t+1} \in [0,1]$ and Lemma~\ref{lemma:ct}, respectively.
\end{proof}
The next proof utilizes the following well-known approximation.
\begin{lemma} [See \cite{BoWr71}] \label{lemma:harmonic}
	Let $n \in \mathbb{N}$ and $H_n = \sum_{k=1}^{n}\frac{1}{k}$. Then
	$
	H_n = \ln{n} + \gamma +\frac{1}{2n} - \alpha_n$,
	where $\gamma \approx 0.5772$ is known as Euler-Mascheroni constant and $0 \leq \alpha_n \leq 1/(8n^2)$.
\end{lemma}

\begin{theorem} \label{thm:vol_concentration_all}
	Let $G_n^h$ be a preferential attachment graph and $T_{hn} = (\tilde{V},\tilde{E})$ its corresponding random tree. Fix $\tilde{S} \subseteq \tilde{V}$ and let $t \in [M]$. 
	Then for every $\varepsilon >0$, for sufficiently large $t$ and for sufficiently large $n$ we get 
	\[
	\Pr\left[\left|\vol_{T_t}(\tilde{S}_t) - 2 \sqrt{t} \mu(\tilde{S}_t)\right| \geq (1+\varepsilon)g_{\mathcal{V}}(h) \frac{t}{\sqrt{h}}\right] \leq 2 \cdot 2^{-(1+{\varepsilon}/2)t/h},
	\]
	where $g_{\mathcal{V}}(h) = \frac{1}{6}\sqrt{2 \ln{2} \, (9 \ln{h}+8\ln{2})} + (2/3) \ln{2} + 2$.\\
	(Recall that $\mu(\tilde{S}_t)$ was introduced in Definition \ref{def:mu_delta_c}).
\end{theorem}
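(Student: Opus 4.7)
The plan is to apply Freedman's inequality (Lemma~\ref{lemma:Freedman}) to the martingale $\hat Z_t$ of Lemma~\ref{lemma:Z^t_martingale} and to translate the resulting tail bound into one on the centred volume. Since $\sum_{j=1}^{t}\delta_j c_{j-1} = (2/\sqrt{\pi})\,\mu(\tilde S_t)$, the definition of $\hat Z_t$ gives the exact identity
\[
Z_t - 2\sqrt{t}\,\mu(\tilde S_t) \;=\; \frac{\hat Z_t}{c_t} + \mu(\tilde S_t)\Bigl(\frac{2}{\sqrt{\pi}\,c_t} - 2\sqrt{t}\Bigr).
\]
I would first dispose of the deterministic bias term: Lemma~\ref{lemma:ct} implies that $\tfrac{2}{\sqrt{\pi}\,c_t} - 2\sqrt{t}$ is non-negative and of order $1/\sqrt{t}$, and together with $\mu(\tilde S_t)\le\sqrt{t}+1/2$ from Lemma~\ref{lemma:mu} this term is $O(1)$, hence negligible compared to $\varepsilon\,g_{\mathcal V}(h)\,t/\sqrt{h}$ once $t$ is sufficiently large.

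The remaining task is to control $|\hat Z_t|/c_t$. Using $1/c_t \le \sqrt{\pi t}\,(1+o(1))$ (Lemma~\ref{lemma:ct}), it is enough to prove $|\hat Z_t|\le(1-o(1))\,g_{\mathcal V}(h)\sqrt{t/(\pi h)}$ with probability at least $1-2\cdot 2^{-(1+\varepsilon/2)t/h}$. I would feed Freedman's inequality with $A=2/\sqrt{\pi}$ and $W=(H_t-1)/(4\pi)$, both from Lemma~\ref{lemma:Z^t_martingale}, yielding
\[
\Pr\bigl[|\hat Z_t|\ge\lambda\bigr] \;\le\; 2\exp\Bigl(-\frac{\lambda^2}{2W+2A\lambda/3}\Bigr).
\]
Setting the Freedman exponent equal to $(1+\varepsilon/2)(\ln 2)\,t/h$ produces a quadratic in $\lambda$, whose positive root satisfies
\[
\lambda \;\le\; \tfrac{2A}{3}\,a + \sqrt{2Wa}, \qquad a := (1+\varepsilon/2)(\ln 2)\,t/h.
\]

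Multiplying through by $\sqrt{\pi t}$ (to pass from $\lambda$ back to the volume scale) and factoring out $t/\sqrt{h}$, the tail bound on the deviation takes the form
\[
\frac{t}{\sqrt{h}}\left[\frac{4(1+\varepsilon/2)\ln 2}{3}\sqrt{\frac{t}{h}} + \sqrt{\frac{(1+\varepsilon/2)(\ln 2)(H_t-1)}{2}}\right](1+o(1)) + O(1).
\]
The main obstacle is then the algebraic verification that the bracketed expression is at most $(1+\varepsilon)\,g_{\mathcal V}(h)$. Heuristically, the linear (Bernstein-type) contribution $\tfrac{4\ln 2}{3}\sqrt{t/h}$ is absorbed by the $(2/3)\ln 2$ summand of $g_{\mathcal V}(h)$, the square-root (Gaussian-type) contribution is captured by $\tfrac{1}{6}\sqrt{2\ln 2\,(9\ln h+8\ln 2)}$ once $H_t-1$ is estimated via Lemma~\ref{lemma:harmonic}, and the additive constant $+2$ in $g_{\mathcal V}(h)$ absorbs the $O(1)$ deterministic bias from the first step together with the $(1+o(1))$ corrections from Lemma~\ref{lemma:ct}. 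Tuning the numerical constants so that the outer slack $(1+\varepsilon)$ exactly accommodates the inner slack $(1+\varepsilon/2)$ and all the lower-order error terms is the key technical point.
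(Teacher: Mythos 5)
Your overall plan — apply Freedman to the martingale of Lemma~\ref{lemma:Z^t_martingale}, invert the tail bound, and translate back to the volume scale via $1/c_t\approx\sqrt{\pi t}$ — is the right skeleton, but it fails at the crucial step because you start the martingale at time $1$. With that choice you are forced to take $A=2/\sqrt{\pi}$ (a constant) and $W=(H_t-1)/(4\pi)\approx(\ln t)/(4\pi)$, and the two contributions to the inverted root that you correctly isolate,
\[
\frac{4(1+\varepsilon/2)\ln 2}{3}\sqrt{\tfrac{t}{h}}
\qquad\text{and}\qquad
\sqrt{\frac{(1+\varepsilon/2)(\ln 2)(H_t-1)}{2}},
\]
both grow unboundedly with $t$ (the first like $\sqrt{t/h}$, the second like $\sqrt{\ln t}$). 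Neither can be ``absorbed'' by $(2/3)\ln 2$ or by $\frac16\sqrt{2\ln 2(9\ln h+8\ln 2)}$: those are fixed functions of $h$, whereas the theorem must hold for every $t\in[M]$, in particular $t=hn\to\infty$ with $h$ fixed. So the claimed algebraic matching does not hold.

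The missing idea is to start the martingale at $t_0=\lfloor t/h\rfloor$ rather than at $1$, i.e.\ apply Freedman only to the increments $\hat Z_{t_0},\dots,\hat Z_t$. This is precisely what the paper does. Over that range one has $A\le \frac{2}{\sqrt{\pi(t_0-1)}}\approx \frac{2}{\sqrt{\pi t/h}}$, so the Bernstein linear term loses the offending $\sqrt{t/h}$ factor, and $W\le\frac{1}{4\pi}\sum_{j=t_0+1}^t \frac1j\approx\frac{\ln h}{4\pi}$, so the Gaussian term depends on $\ln h$ rather than $\ln t$. The price is that the Freedman bound controls $\hat Z_t-\hat Z_{t_0}$ rather than $\hat Z_t$ itself, and the initial value contributes $\frac{c_{t_0}}{c_t}Z_{t_0}\le 2\sqrt{t\,t_0}(1+O(1/t))=\frac{2t}{\sqrt{h}}+O(1)$ after rescaling. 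That $\frac{2t}{\sqrt{h}}$ is exactly what the $+2$ in $g_{\mathcal V}(h)=\bar g_{\mathcal V}(h)+2$ is for; it is \emph{not} there to mop up $O(1)$ rounding errors or $(1+o(1))$ corrections, which are swallowed by the $\varepsilon$-slack. Your exact identity for $Z_t-2\sqrt{t}\mu(\tilde S_t)$ and your treatment of the deterministic bias $\mu(\tilde S_t)(\tfrac{2}{\sqrt\pi c_t}-2\sqrt t)=O(1)$ are fine and reappear in the paper's argument, so the repair is purely the choice of starting index.
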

\begin{proof}
	Throughout the proof we refer to the process of constructing the random tree $T_{hn}$. For $t \in [M]$ let $\mathcal{F}_t$ be a $\sigma$-algebra associated with all the events that happened till time $t$. Fix $\varepsilon >0$, set $t_0 = \lfloor t/h \rfloor$ and for $j \in \{t_0, t_0+1, \ldots, t\}$ consider 
	\[
	\hat{Z}_j = c_jZ_j - \sum_{i=1}^{j}{\delta}_i c_{i-1},
	\]
	where $Z_j = \vol_{T_j}(\tilde{S}_j)$ and $c_i$'s and ${\delta}_i$'s are as in Definition \ref{def:mu_delta_c}. By Lemma \ref{lemma:Z^t_martingale} we know that $\hat{Z}_{t_0}, \ldots, \hat{Z}_t$ is a martingale with respect to the filtration $\mathcal{F}_{t_0} \subseteq \ldots \subseteq \mathcal{F}_t$ such that $|\hat{Z}_{j} - \hat{Z}_{j-1}| \leq \frac{2}{\sqrt{\pi (j-1)}}$ and $\var{(\hat{Z}_{j}-\hat{Z}_{j-1})|\mathcal{F}_{j-1}} \leq \frac{1}{4 \pi j}$. Therefore
	\[
	\max_{ j \in \{t_0+1, \ldots, t\}} (\hat{Z}_j - \hat{Z}_{j-1}) \leq \frac{2}{\sqrt{\pi (t_0-1)}} = \frac{2}{\sqrt{\pi (t/h)}} (1+ O(1/t))
	\]
	and, by Lemma \ref{lemma:harmonic},
	\[
	\begin{split}
		\sum_{j=t_0+1}^{t}   \var{(\hat{Z}_{j}-\hat{Z}_{j-1})|\mathcal{F}_{j-1}} 			 & \leq  \sum_{j=t_0+1}^{t}\frac{1}{4 \pi j} 
		\leq \frac{1}{4 \pi} \ln{(t/t_0)} + \frac{1}{4 \pi} \cdot \frac{1}{8\lfloor t/h \rfloor^2} \\
		& = \frac{1}{4 \pi} \ln{h} + O(1/t).
	\end{split}
	\]
	Applying Freedman's inequality (Lemma \ref{lemma:Freedman}) to $\hat{Z}_{t_0}, \ldots, \hat{Z}_t$ with $A = \frac{2}{\sqrt{\pi (t/h)}}(1+O(1/t))$, $W = \frac{1}{4 \pi} \ln{h} + O(1/t)$ and
	$\lambda = \frac{(1+\varepsilon)\bar{g}_{\mathcal{V}}(h)}{\sqrt{\pi}} \sqrt{t/h}$, where $\bar{g}_{\mathcal{V}}(h) = g_{\mathcal{V}}(h)-2$, we get
	\[
	\begin{split}
		\Pr\left[\hat{Z}_t \vphantom{\frac{(1+\varepsilon)\bar{g}_{\mathcal{V}}(h)}{\sqrt{\pi}}}\right.& \left. \geq \hat{Z}_{t_0} + \frac{(1+\varepsilon)\bar{g}_{\mathcal{V}}(h)}{\sqrt{\pi}} \sqrt{t/h} \right] \\
		&\leq \exp\left\{-\frac{\frac{(1+\varepsilon)^2\bar{g}_{\mathcal{V}}(h)^2}{\pi} \frac{t}{h}}{2 \cdot \frac{1}{4 \pi} \ln{h} +O(1/t) + \frac{2}{3} \cdot \frac{2}{\pi} (1+\varepsilon)\bar{g}_{\mathcal{V}}(h)(1+O(1/t))}\right\} \\
		&\leq \exp\left\{-\frac{(1+\varepsilon/2)^2\bar{g}_{\mathcal{V}}(h)^2 }{\frac{1}{2} \ln{h} + \frac{4}{3} (1+\varepsilon/2)\bar{g}_{\mathcal{V}}(h)} \cdot \frac{t}{h}\right\},
	\end{split}
	\]
	where the last inequality holds for sufficiently large $t$ and $n$. One can verify that
	\[
	\frac{(1+\varepsilon/2)^2\bar{g}_{\mathcal{V}}(h)^2 }{\frac{1}{2} \ln{h} + \frac{4}{3} (1+\varepsilon/2)\bar{g}_{\mathcal{V}}(h)} \geq (1+{\varepsilon}/2) \ln{2}
	\]
	thus for sufficiently large $t$ and $n$ we get
	\begin{equation} \label{eq:prob_hatZ+}
		\Pr\left[\hat{Z}_t \geq \hat{Z}_{t_0} + \frac{(1+\varepsilon)\bar{g}_{\mathcal{V}}(h)}{\sqrt{\pi}}\sqrt{t/h}\right] \leq 2^{-(1+{\varepsilon}/2)t/h}.
	\end{equation}
	
	Let us now analyze the complementary event $\left\{\hat{Z}_t \leq \hat{Z}_{t_0} + \frac{(1+\varepsilon)\bar{g}_{\mathcal{V}}(h)}{\sqrt{\pi}}\sqrt{t/h}\right\}$. It is equivalent to
	\[
	\left\{c_t Z_t - \sum_{i=1}^{t}{\delta}_i c_{i-1} \leq c_{t_0} Z_{t_0} - \sum_{i=1}^{t_0}{\delta}_i c_{i-1} + \frac{(1+\varepsilon)\bar{g}_{\mathcal{V}}(h)}{\sqrt{\pi}}\sqrt{t/h}\right\}
	\]
	which is
	%\begin{equation} \label{eq:Zt+}
	\[
	\left\{Z_t - \frac{1}{c_t}\sum_{i=1}^{t}{\delta}_i c_{i-1} \leq \frac{c_{t_0}}{c_t} Z_{t_0} - \frac{1}{c_t}\sum_{i=1}^{t_0}{\delta}_i c_{i-1} + \frac{1}{c_t} \frac{(1+\varepsilon)\bar{g}_{\mathcal{V}}(h)}{\sqrt{\pi}}\sqrt{t/h}\right\}.
	\]
	%\end{equation}
	By the definition of $\mu(\tilde{S}_t)$, Lemma \ref{lemma:ct} and Lemma \ref{lemma:mu} we have
	\begin{equation} \label{eq:bound+_mu}
		\begin{split}
			\frac{1}{c_t} \sum_{i=1}^{t}{\delta}_i c_{i-1} & = \frac{2}{\sqrt{\pi} c_t} \mu(\tilde{S}_t) \leq 2 \sqrt{t} \mu(\tilde{S}_t) e^{\frac{1}{8t}+\frac{1}{4 \cdot 144 t^2}} \\
			& = 2 \sqrt{t} \mu(\tilde{S}_t) (1+O(1/t)) = 2 \sqrt{t} \mu(\tilde{S}_t) + O(1).
		\end{split}
	\end{equation}
	Note that $Z_{t_0} \leq 2 t_0$, therefore by Lemma \ref{lemma:ct}
	\begin{equation} \label{eq:bound+_proces_begin}
		\frac{c_{t_0}}{c_t} Z_{t_0} \leq 2 \sqrt{t} \sqrt{t_0} (1+O(1/t)) = \frac{2 t}{\sqrt{h}} + O(1),
	\end{equation}
	and, again by Lemma \ref{lemma:ct},
	\begin{equation} \label{eq:bound+_x_Azuma}
		\begin{split}
			\frac{1}{c_t} \frac{(1+\varepsilon)\bar{g}_{\mathcal{V}}(h)}{\sqrt{\pi}}\sqrt{t/h} & \leq (1+\varepsilon)\bar{g}_{\mathcal{V}}(h)\frac{t}{\sqrt{h}}~(1+O(1/t)) 
			= (1+\varepsilon)\bar{g}_{\mathcal{V}}(h)\frac{t}{\sqrt{h}} + O(1).
		\end{split}
	\end{equation}
	Thus by (\ref{eq:bound+_mu}), (\ref{eq:bound+_proces_begin}) and (\ref{eq:bound+_x_Azuma}) we get that the event $\left\{\hat{Z}_t \leq \hat{Z}_{t_0} + \frac{(1+\varepsilon)\bar{g}_{\mathcal{V}}(h)}{\sqrt{\pi}}\sqrt{t/h}\right\}$ implies
	\[
	\left\{Z_t - 2 \sqrt{t}\mu(\tilde{S}_t) \leq \frac{2t}{\sqrt{h}} + (1+\varepsilon)\bar{g}_{\mathcal{V}}(h)\frac{t}{\sqrt{h}} + O(1)\right\}
	\]
	which, by (\ref{eq:prob_hatZ+}), for sufficiently large $t$ and $n$, gives (recall that $\bar{g}_{\mathcal{V}}(h) + 2 =  g_{\mathcal{V}}(h)$)
	\begin{equation} \label{eq:P_bound+}
		\Pr\left[\vol_{T_t}(\tilde{S}_t) - 2 \sqrt{t}\mu(\tilde{S}_t) \geq (1+\varepsilon)g_{\mathcal{V}}(h)\frac{t}{\sqrt{h}} \right] \leq 2^{-(1+{\varepsilon}/2)t/h}.
	\end{equation}
	To get the opposite bound we repeat the reasoning for the martingale $-\hat{Z}_{t_0}, \ldots, -\hat{Z}_t$ (check the Appendix~\ref{app:B} for the details).
\end{proof}

\begin{corollary} \label{cor:vol_conc_for_all_i}
	Let $G_n^h = (V,E)$ be a preferential attachment graph and $T_{hn} = (\tilde{V},\tilde{E})$ its corresponding random tree.
	Then for every $\varepsilon >0$ %and for sufficiently large~$n$
	we have 
	\[ 
	\begin{split}
		\Pr \left[\forall i \in \{\lfloor \log_2{n} \rfloor, \ldots, n\} \, \forall S \subseteq V 
		\left|\vol_{G_i^h}(S_i) - 2 \sqrt{hi} \, \mu(\tilde{S}_{hi})\right| \leq  (1+\varepsilon)g_{\mathcal{V}}(h) 
		\frac{hi}{\sqrt{h}}
		\right] = 1 - o(1), %\frac{1}{n^{\varepsilon}},
	\end{split}
	\]
	%		\[ 
	%		\begin{split}
		%			\Pr \left[  \vphantom{\frac{C}{\sqrt{h}}} \forall i \right. & \in \{\lfloor \log_2{n} \rfloor, \ldots, n\} \, \forall S \subseteq V \\
		%			& \left. \left|\vol_{G_i^h}(S_i) - 2 \sqrt{hi} \, \mu(\tilde{S}_{hi})\right| \leq  (1+\varepsilon)g_{\mathcal{V}}(h) 
		%			\frac{hi}{\sqrt{h}}
		%			\right] = 1 - o(1), %\frac{1}{n^{\varepsilon}},
		%		\end{split}
	%		\]
	where $g_{\mathcal{V}}(h) = \frac{1}{6}\sqrt{2 \ln{2} \, (9 \ln{h}+8\ln{2})} + (2/3) \ln{2} + 2$.
\end{corollary}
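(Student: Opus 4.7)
The plan is to deduce the corollary from Theorem~\ref{thm:vol_concentration_all} via a union bound over vertex sets and starting times, with the lower cutoff $i\ge\lfloor\log_2 n\rfloor$ chosen precisely so that the $2^i$ cost of the union bound is dominated by the $2^{-(1+\varepsilon/2)i}$ tail.

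First I would observe that the event in question depends on $S$ only through $S_i=S\cap[i]$, and that $\vol_{G_i^h}(S_i)=\vol_{T_{hi}}(\tilde S_{hi})$. Hence it suffices to prove: whp, for every $i\in\{\lfloor\log_2 n\rfloor,\dots,n\}$ and every $S'\subseteq[i]$,
\[
\bigl|\vol_{T_{hi}}(\widetilde{S'})-2\sqrt{hi}\,\mu(\widetilde{S'})\bigr|\le(1+\varepsilon)g_{\mathcal V}(h)\frac{hi}{\sqrt h}.
\]
Apply Theorem~\ref{thm:vol_concentration_all} with $t=hi$ and the fixed set $\widetilde{S'}$ (viewed as a subset of $\tilde V$); since $i\ge\lfloor\log_2 n\rfloor\to\infty$, we have $t=hi\to\infty$, so the hypotheses on $t$ and $n$ are met for all $n$ large enough. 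For each such pair $(i,S')$ the theorem gives
\[
\Pr\!\left[\bigl|\vol_{T_{hi}}(\widetilde{S'})-2\sqrt{hi}\,\mu(\widetilde{S'})\bigr|\ge(1+\varepsilon)g_{\mathcal V}(h)\frac{hi}{\sqrt h}\right]\le 2\cdot 2^{-(1+\varepsilon/2)i}.
\]

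Next I would take a union bound. For each $i$, the number of subsets $S'\subseteq[i]$ is $2^i$, so the probability that the bound fails for some $S'\subseteq[i]$ is at most $2^i\cdot 2\cdot 2^{-(1+\varepsilon/2)i}=2\cdot 2^{-(\varepsilon/2)i}$. Summing over $i\in\{\lfloor\log_2 n\rfloor,\dots,n\}$,
\[
\sum_{i=\lfloor\log_2 n\rfloor}^{n}2\cdot 2^{-(\varepsilon/2)i}\le\frac{2\cdot 2^{-(\varepsilon/2)\lfloor\log_2 n\rfloor}}{1-2^{-\varepsilon/2}}=O\!\left(n^{-\varepsilon/2}\right)=o(1),
\]
which completes the argument.

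The only subtlety, and what I would call the main obstacle, is the choice of the lower cutoff $\lfloor\log_2 n\rfloor$: the per-set failure probability decays only like $2^{-(1+\varepsilon/2)i}$ against $2^i$ subsets, so the union bound would fail for very small $i$. Cutting the range at $i\ge\lfloor\log_2 n\rfloor$ ensures that the geometric tail collapses to $n^{-\varepsilon/2}$. Apart from this, everything is a direct book-keeping application of Theorem~\ref{thm:vol_concentration_all} and the identity $\vol_{G_i^h}(S_i)=\vol_{T_{hi}}(\tilde S_{hi})$.
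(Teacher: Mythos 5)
Your proof is correct and follows essentially the same route as the paper: apply Theorem~\ref{thm:vol_concentration_all} with $t=hi$ to each of the $2^i$ distinct sets $S_i\subseteq[i]$, union bound, then sum the resulting geometric tail over $i\ge\lfloor\log_2 n\rfloor$. Your explicit remark that the event depends on $S$ only through $S_i$ is a nice clarification, and your final estimate $O(n^{-\varepsilon/2})$ is in fact the correct one (the paper's displayed tail constant $\sim 2/((1-2^{-\varepsilon})n^{\varepsilon})$ contains a small slip in the exponent, though this does not affect the $o(1)$ conclusion).
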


%Check the Appendix~\ref{app:B} for the proof of the above corollary.
\begin{proof}%[Proof of Corollary \ref{cor:vol_conc_for_all_i}]
	Fix $\varepsilon>0$. Recall that $\vol_{G_i^h}(S_i)= \vol_{T_{hi}}(\tilde{S}_{hi})$. For $S \subseteq V$ and $i \in \{\lfloor \log_2{n} \rfloor, \ldots, n\}$ define the event $\mathcal{E}_{S,i}$ as follows 
	\[
	\mathcal{E}_{S,i} = \Bigl\{\left|\vol_{T_{hi}}(\tilde{S}_{hi}) - 2 \sqrt{hi} \, \mu(\tilde{S}_{hi})\right| \leq  (1+\varepsilon)g_{\mathcal{V}}(h) \frac{hi}{\sqrt{h}} \Bigr\}.
	\]
	For $i \in \{\lfloor \log_2{n} \rfloor, \ldots, n\}$, by Theorem \ref{thm:vol_concentration_all} and the union bound, for sufficiently large $n$ we have
	\[
	\Pr\left[\exists S \subseteq V \,\, \mathcal{E}_{S,i}^C \right] \leq 2^{i} \cdot 2 \cdot 2^{-(1+\varepsilon/2)i} = 2 \cdot 2^{-(\varepsilon/2) i}.
	\]
	Indeed, note that $i$ iterates over the vertices of $G_n^h$ and at time $i$ there are $2^i$ possible configurations for $S_i$, thus also for $\tilde{S}_{hi}$. Next, again by the union bound, for sufficiently large $n$ we get
	\[
	\begin{split}
		\Pr\left[\exists i \in \{\lfloor \log_2{n} \rfloor, \ldots, n\} \, \, \exists S \subseteq V \,\, \mathcal{E}_{S,i}^C \right] & \leq \sum_{i=\lfloor \log_2{n} \rfloor}^{n}  2 \cdot 2^{-(\varepsilon/2) i} \\
		& \leq \frac{2 \cdot 2^{-\varepsilon \lfloor \log_2{n} \rfloor}}{1-2^{-\varepsilon}} \sim \frac{2}{(1-2^{-\varepsilon}) n^{\varepsilon}},
	\end{split}
	\]
	which implies
	\[
	\Pr\left[\forall i \in \{\lfloor \log_2{n} \rfloor, \ldots, n\} \, \, \forall S \subseteq V \,\, \mathcal{E}_{S,i} \right] = 1-o(1).
	\]
\end{proof}

Note that considering only $i=n$ in Corollary \ref{cor:vol_conc_for_all_i} we get the statement of Theorem \ref{thm:vol_concentration}, which finishes its proof.

Now we will again use martingale inequalities which, together with concentration results for volumes from Corollary~\ref{cor:vol_conc_for_all_i}, will lead to the proof of Theorem~\ref{thm:edges_concentration}. 	Consider the process of constructing the random tree $T_{hn} =(\tilde{V}, \tilde{E})$. Let $\tilde{S} \subseteq \tilde{V}$ and $j \in \{1,\ldots,M\}$. The result stated in Corollary \ref{cor:vol_conc_for_all_i} gives the concentration of the volumes of $\tilde{S}_j$ at time $j$ only for $j=hi$, where $i \in \{\lfloor \log_2{n} \rfloor, \ldots, n\}$. In particular, it says nothing about the concentration of the volumes of the sets $\tilde{S}_{hi+k}$, where $k \in \{1,2,\ldots h-1\}$, at time $hi+k$. Such intermediate concentrations will be needed to prove Theorem \ref{thm:edges_concentration}, i.e., to draw a conclusion about the concentration of the number of edges within~$\tilde{S}$. We derive those intermediate concentrations in Lemma~\ref{lemma:Zis_all_concentrated}. %(its proof is given in the Appendix~\ref{app:B}). %Lemma~\ref{lemma:Zis_all_concentrated} states how those intermediate concentrations behave.

\begin{lemma} \label{lemma:Zis_all_concentrated}
	Let $G_n^h = (V,E)$ be a preferential attachment graph and $T_{hn} = (\tilde{V},\tilde{E})$ its corresponding random tree. Then for every $\varepsilon >0$ %and for sufficiently large~$n$
	we have 
	\[
	\begin{split}
		\Pr \left[ \forall t  \in \{\lfloor \log_2{n} \rfloor h , \ldots, M\} \, \forall S \subseteq V 
		\left|\vol_{T_{t}}(\tilde{S}_{t}) - 2 \sqrt{t} \, \mu(\tilde{S}_{t})\right| \leq  (1+\varepsilon)g_{\mathcal{V}}(h) \frac{t}{\sqrt{h}}\right] = 1 - o(1), %\frac{1}{n^{\varepsilon}},
	\end{split}
	\]
	where $g_{\mathcal{V}}(h) = \frac{1}{6}\sqrt{2 \ln{2} \, (9 \ln{h}+8\ln{2})} + (2/3) \ln{2} + 2$.
\end{lemma}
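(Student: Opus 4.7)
The plan is to extend Corollary \ref{cor:vol_conc_for_all_i} from times $t = hi$ to all intermediate times in $\{\lfloor \log_2 n \rfloor h, \ldots, M\}$ by a union bound, leveraging Theorem \ref{thm:vol_concentration_all} which was already proved for arbitrary times $t \in [M]$.

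Fix $\varepsilon > 0$. By Theorem \ref{thm:vol_concentration_all}, for any fixed set $R \subseteq \tilde{V}$ and sufficiently large $t$ and $n$,
\[
\Pr\left[\left|\vol_{T_t}(R \cap [t]) - 2\sqrt{t}\,\mu(R \cap [t])\right| > (1+\varepsilon)g_{\mathcal{V}}(h)\frac{t}{\sqrt{h}}\right] \leq 2\cdot 2^{-(1+\varepsilon/2)t/h}.
\]
The key combinatorial observation is that, although $\tilde{V}$ has $2^M$ subsets, the sets that actually arise as $\tilde{S}_t$ for some $S \subseteq V$ are highly constrained: writing $t = hi + k$ with $0 \leq k < h$, such $\tilde{S}_t$ must be the union of complete blocks $\{h(j-1)+1, \ldots, hj\}$ for a subset of $j \in \{1, \ldots, i\}$, together with either all or none of the partial tail block $\{hi+1, \ldots, hi+k\}$ (depending on whether the vertex $i+1$ belongs to $S$ or not). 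Consequently the number of admissible $\tilde{S}_t$ at time $t$ is at most $2^{i+1} \leq 4 \cdot 2^{t/h}$, rather than the prohibitive $2^t$.

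Combining the two bounds via the union bound gives, for each such $t$,
\[
\Pr\left[\exists S \subseteq V \,:\, \left|\vol_{T_t}(\tilde{S}_t) - 2\sqrt{t}\,\mu(\tilde{S}_t)\right| > (1+\varepsilon)g_{\mathcal{V}}(h)\frac{t}{\sqrt{h}}\right] \leq 8 \cdot 2^{-(\varepsilon/2) t/h}.
\]
Summing this geometric series over $t \in \{\lfloor \log_2 n \rfloor h, \ldots, M\}$ (which has increments of $1$ in $t$, hence $1/h$ in the exponent $t/h$) produces
\[
\sum_{t=\lfloor \log_2 n \rfloor h}^{M} 8 \cdot 2^{-(\varepsilon/2)t/h} = O\!\left(\frac{h}{n^{\varepsilon/2}}\right) = o(1),
\]
completing the proof.

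The main (mild) obstacle is the counting step: one must recognize that the relevant union bound is over $\tilde{S}_t$ of the structured form above, not over arbitrary subsets of $[t]$; otherwise the combinatorial factor $2^t$ would overwhelm the tail bound $2^{-(1+\varepsilon/2)t/h}$. Once this is observed, the argument reduces to a direct geometric sum, mirroring the proof of Corollary \ref{cor:vol_conc_for_all_i} but with the extra factor of $h$ (from the intermediate $k$-values) absorbed into the $o(1)$.
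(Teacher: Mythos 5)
Your proof is correct, and it takes a genuinely different route from the paper's. The paper proves the lemma in two steps: it first invokes Corollary~\ref{cor:vol_conc_for_all_i} with tolerance parameter $\varepsilon/2$ to get concentration at block-aligned times $t = ih$, and then argues \emph{deterministically} that this implies concentration at all intermediate times $t = ih + k$ with tolerance $\varepsilon$. The interpolation step rests on the observations that $Z_j \le Z_{j+k} \le Z_j + 2k \le Z_j + 2h$ and that $2\sqrt{j+k}\,\mu(\tilde{S}_{j+k}) = 2\sqrt{j}\,\mu(\tilde{S}_j) + O(1)$, so the $O(h)$ slack is absorbed by widening $\varepsilon/2$ to $\varepsilon$ for large $n$. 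You instead apply Theorem~\ref{thm:vol_concentration_all} directly at every $t$ in the range and run a union bound, using the same structural fact the paper uses inside Corollary~\ref{cor:vol_conc_for_all_i} --- namely that the admissible $\tilde{S}_t$ form only $2^{\lfloor t/h\rfloor + 1}$ sets (full blocks plus at most one partial tail block), not $2^t$ --- so the $2^{-(1+\varepsilon/2)t/h}$ tail still dominates. Your approach is more uniform and avoids the $\varepsilon/2\to\varepsilon$ buffering and the $O(1)$-drift bookkeeping entirely; the price is that you invoke the martingale tail bound at all $t$, which contributes an extra factor of order $h/(1-2^{-\varepsilon/(2h)}) = O(h^2/\varepsilon)$ to the geometric sum, but this is still $o(1)$ since $h$ and $\varepsilon$ are fixed. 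One minor note: $2^{i+1} \le 2\cdot 2^{t/h}$, so the numerical constants $4$ and $8$ in your bounds can be halved, though this is immaterial.
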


\begin{proof} %[Proof of Lemma~\ref{lemma:Zis_all_concentrated}]
	Fix $\varepsilon > 0$. Define the events $\mathcal{H}$ and $\mathcal{V}$ as follows
	
	\[ 
	\begin{split}
		\mathcal{H} & = 	\left\{  \forall t \in \{\lfloor \log_2{n} \rfloor h , \ldots, M\} \, \forall S \subseteq V  \left|\vol_{T_{t}}(\tilde{S}_{t}) - 2 \sqrt{t} \, \mu(\tilde{S}_{t})\right| \leq  (1+\varepsilon)g_{\mathcal{V}}(h) \frac{t}{\sqrt{h}}\right\}\\
		\mathcal{V} & = \left\{ \vphantom{\frac{C}{\sqrt{h}}} \right. \forall j  = ih  \textnormal{ such that } i \in \{\lfloor \log_2{n} \rfloor, \ldots, n\} \, \forall S \subseteq V \\
		& \quad \quad \quad \quad \quad \quad
		\left. \left|\vol_{T_j}(\tilde{S}_j) - 2 \sqrt{j} \, \mu(\tilde{S}_{j})\right| \leq  (1+\varepsilon/2)g_{\mathcal{V}}(h) \frac{j}{\sqrt{h}}\right\}.
	\end{split}
	\]

	By Corollary \ref{cor:vol_conc_for_all_i} we know that $\Pr[\mathcal{V}] = 1-o(1)$ thus it is enough to show that the event $\mathcal{V}$ implies the event $\mathcal{H}$.
	
	Assume that $\mathcal{V}$ holds. For $t \in [M]$ and $S \subseteq V$ let $Z_t = \vol_{T_t}(\tilde{S}_t)$. Consider all $j = ih$ where $i \in \{\lfloor \log_2{n} \rfloor, \ldots, n-1\}$ and let $k \in \{1,2,\ldots h-1\}$. 
	By the fact that $\sqrt{1+k/j} = 1+O(1/j)$, ${\delta}_{\ell} \in \{0,1\}$, Lemma \ref{lemma:ct} and Lemma \ref{lemma:mu} we get
	\[
	\begin{split}
		2 \sqrt{j+k} \, \mu(\tilde{S}_{j+k}) & = 2 \sqrt{j} \sqrt{1+k/j} \left(\mu(\tilde{S}_{j}) + \sum_{\ell = j+1}^{j+k}{{\delta}_{\ell} c_{\ell-1}}\right) \\
		& \leq 2 \sqrt{j} \left(1+ O(1/j)\right) \left( \mu(\tilde{S}_{j}) + \sum_{\ell = j+1}^{j+k} \frac{1}{\sqrt{\pi (\ell -1)}}\right) \\
		& = (2 \sqrt{j} + O(1/\sqrt{j})) \left(\mu(\tilde{S}_j) + O(1/\sqrt{j})\right) 
		= 2 \sqrt{j} \, \mu(\tilde{S}_j) + O(1).
	\end{split}
	\]
	Therefore, if $\mathcal{V}$ holds, then for all $j = ih$ where $i \in \{\lfloor \log_2{n} \rfloor, \ldots, n-1\}$, for all $S \subseteq V$, and for all $k \in \{1,2,\ldots,h-1\}$, for sufficiently large $n$ on one hand, 
	\[
	\begin{split}
		Z_{j+k} & \geq Z_j \geq 2 \sqrt{j} \, \mu(\tilde{S}_j) - (1+\varepsilon/2)g_{\mathcal{V}}(h)\frac{j}{\sqrt{h}} \\
		& \geq 2 \sqrt{j+k} \, \mu(\tilde{S}_{j+k}) - O(1) - (1+\varepsilon/2)g_{\mathcal{V}}(h)\frac{j+k}{\sqrt{h}} \\
		& \geq 2 \sqrt{j+k} \, \mu(\tilde{S}_{j+k}) - (1+\varepsilon)g_{\mathcal{V}}(h)\frac{j+k}{\sqrt{h}}
	\end{split}
	\]
	and on the other hand
	\[
	\begin{split}
		Z_{j+k} & \leq Z_j +2k \leq 2 \sqrt{j} \, \mu(\tilde{S}_j) + (1+\varepsilon/2)g_{\mathcal{V}}(h)\frac{j}{\sqrt{h}} + 2h \\
		& \leq 2 \sqrt{j+k} \, \mu(\tilde{S}_{j+k}) + (1+\varepsilon)g_{\mathcal{V}}(h)\frac{j+k}{\sqrt{h}}.
	\end{split}
	\]
	Thus for sufficiently large $n$ the event $\mathcal{V}$ implies the event $\mathcal{H}$, therefore $\Pr[\mathcal{V}]=1-o(1)$ implies $\Pr[\mathcal{H}]=1-o(1)$ and the proof is finished.
\end{proof}

Now, we move on to the concentration of the number of edges within subsets of $V$.

\begin{lemma} \label{lemma:X^t_martingale}
	Let $G_n^h$ be a preferential attachment graph. Consider the process of constructing its corresponding random tree $T_{hn} = (\tilde{V},\tilde{E})$.	Fix $\tilde{S} \subseteq \tilde{V}$ and for $t \in [M]$ let $X_t = e(\tilde{S}_t)$ and $\mathcal{F}_t$ be a $\sigma$-algebra associated with all the events that happened till time $t$. For $j \in \{2,\ldots,M\}$ set $D_j = \E[X_{j}-X_{j-1}|\mathcal{F}_{j-1}]$ and define
	\[
	\hat{X}_t = X_t - \sum_{j=2}^{t}D_j.
	\]
	Then $\hat{X}_1, \hat{X}_2, \ldots , \hat{X}_M$ is a martingale with respect to the filtration $\mathcal{F}_1 \subseteq \ldots \subseteq \mathcal{F}_M$. Moreover, for $t \in \{2,\ldots,M\}$
	\[
	|\hat{X}_{t}-\hat{X}_{t-1}| \leq {\delta}_{t}.
	\]
\end{lemma}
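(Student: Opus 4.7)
The plan is to verify the martingale claim directly from the Doob-type decomposition used to define $\hat{X}_t$, and then bound the one-step increment by noting that exactly one edge is added at each step of the construction of $T_{hn}$.

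For the martingale property I would first observe that $\sum_{j=2}^{t} D_j$ is $\mathcal{F}_{t-1}$-measurable, since each $D_j$ is, by definition, a conditional expectation given $\mathcal{F}_{j-1} \subseteq \mathcal{F}_{t-1}$. Combined with the identity $\E[X_t \mid \mathcal{F}_{t-1}] = X_{t-1} + D_t$, this yields
\[
\E[\hat{X}_t \mid \mathcal{F}_{t-1}] = \E[X_t \mid \mathcal{F}_{t-1}] - \sum_{j=2}^{t} D_j = (X_{t-1} + D_t) - \sum_{j=2}^{t-1} D_j - D_t = \hat{X}_{t-1},
\]
so $\hat{X}_1,\ldots,\hat{X}_M$ is a martingale with respect to $\mathcal{F}_1 \subseteq \ldots \subseteq \mathcal{F}_M$.

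For the increment bound I would write $\hat{X}_t - \hat{X}_{t-1} = (X_t - X_{t-1}) - D_t$ and analyse the step-$t$ change of $X_t = e(\tilde{S}_t)$. Exactly one new edge is introduced at time $t$ in Phase~1, joining mini-vertex $t$ to some earlier mini-vertex (possibly $t$ itself), and this is the only edge that can contribute to $X_t - X_{t-1}$. Such an edge lies in $\tilde{S}_t$ only if both of its endpoints belong to $\tilde{S}$, which in particular forces $t \in \tilde{S}$. Hence $X_t - X_{t-1} \in \{0,\delta_t\}$, and taking conditional expectation gives $D_t \in [0,\delta_t]$ as well, so the difference $(X_t - X_{t-1}) - D_t$ lies in $[-\delta_t,\delta_t]$, which is exactly the claimed bound.

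There is no substantial obstacle here: this is essentially the Doob decomposition plus a one-line combinatorial observation. The only subtlety is to remember that a loop at $t$ is a legitimate outcome of Phase~1 and that, under the paper's conventions for $e(\cdot)$, such a loop still contributes a single unit to $e(\tilde{S}_t)$ when $t \in \tilde{S}$, so the case analysis above remains valid.
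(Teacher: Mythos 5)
Your proof is correct and follows essentially the same route as the paper's: Doob decomposition for the martingale property, and bounding both $X_t-X_{t-1}$ and $D_t$ by $\delta_t$ to control the increment. The only cosmetic difference is that the paper computes $D_t = \delta_t \frac{Z_{t-1}+1}{2t-1}$ explicitly (this formula is reused in the subsequent Lemma~\ref{lemma:XM_as_Zis}), whereas you bound $D_t$ directly as the conditional expectation of a $[0,\delta_t]$-valued increment, which suffices for the present lemma.
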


\begin{proof}
	Let $t \in \{2,\ldots,M\}$. Note that
	\[
	\begin{split}
		\E[\hat{X}_t  - \hat{X}_{t-1} | \mathcal{F}_{t-1}] & = \E[{X}_t - {X}_{t-1} -D_t| \mathcal{F}_{t-1}] \\
		& =  \E[{X}_t - {X}_{t-1}| \mathcal{F}_{t-1}] - \E[\E[X_{t}-X_{t-1}|\mathcal{F}_{t-1}]| \mathcal{F}_{t-1}] \\
		& = \E[{X}_t - {X}_{t-1}| \mathcal{F}_{t-1}] - \E[{X}_t - {X}_{t-1}| \mathcal{F}_{t-1}] = 0,
	\end{split}
	\]
	thus $\hat{X}_1, \ldots, \hat{X}_M$ is a martingale with respect to the filtration $\mathcal{F}_1 \subseteq \ldots \subseteq \mathcal{F}_M$.
	
	Now, for $t \in [M]$ let $Z_t = \vol_{T_t}(\tilde{S}_t)$. Recall that when the mini-vertex $t$ arrives it may also connect to itself thus for $t \in \{2,\ldots, M\}$ we have
	\[
	D_t = \E[X_t - X_{t-1}| \mathcal{F}_{t-1}] = {\delta}_t \frac{Z_{t-1}+1}{2t-1}.
	\]
	Therefore 
	\[
	\begin{split}
		|\hat{X}_{t}-\hat{X}_{t-1}| & = |X_{t} - X_{t-1} - D_t| = \left|X_{t} - X_{t-1} - {\delta}_t \frac{Z_{t-1}+1}{2t-1}\right| \\
		& \leq \max\left\{X_t-X_{t-1}, {\delta}_t \frac{Z_{t-1}+1}{2t-1}\right\} \leq {\delta}_t,
	\end{split}
	\]
	where we used the fact that $0 \leq X_t - X_{t-1} \leq {\delta}_t$, $Z_{t-1} \leq 2t-2$ thus $0 \leq {\delta}_t \frac{Z_{t-1}}{2t-1} \leq {\delta}_t$ and  $|a-b| \leq \max\{a,b\}$ for non-negative $a,b$.
\end{proof}

\begin{lemma} \label{lemma:XM_as_Zis}
	Let $G_n^h$ be a preferential attachment graph and $T_{hn} = (\tilde{V},\tilde{E})$ its corresponding random tree. For $t \in [M]$ and $S \subseteq V$ let $Z_t = \vol_{T_{t}}(\tilde{S}_t)$. %Let also  $t_0 = \lfloor \log_2{n} \rfloor h$.
	Then for $t_0 \in [M]$ divisible by $h$ and for every $\varepsilon>0$ we have
	\[
	\Pr\left[ \forall S \subseteq V \,
	\left|e(S) - \left(e(S_{t_0/h}) + \sum_{i=t_0+1}^{M} {\delta}_i \frac{Z_{i-1}+1}{2i-1} \right)\right| \leq {B}_{\varepsilon} \frac{M}{\sqrt{h}}
	\right] = 1 - o(1),
	\]
	where ${B}_{\varepsilon} = \sqrt{(1+\varepsilon)2\ln{2}}$.
\end{lemma}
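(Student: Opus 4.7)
The plan is to apply the Azuma--Hoeffding inequality to the martingale $\hat{X}_{t_0}, \hat{X}_{t_0+1}, \ldots, \hat{X}_M$ provided by Lemma~\ref{lemma:X^t_martingale}, and then take a union bound over all $2^n$ subsets $S \subseteq V$. Fix $\varepsilon > 0$ and $S \subseteq V$. The construction of $\hat{X}_t = X_t - \sum_{j=2}^{t} D_j$, together with $D_j = \delta_j(Z_{j-1}+1)/(2j-1)$, telescopes to the identity
\begin{equation*}
X_M - X_{t_0} - \sum_{i=t_0+1}^{M} \delta_i \frac{Z_{i-1}+1}{2i-1} = \hat{X}_M - \hat{X}_{t_0}.
\end{equation*}
Since $t_0$ is divisible by $h$, the sets of mini-vertices satisfy $\tilde{S}_{t_0}$ being exactly the mini-vertices of $S_{t_0/h}$, and hence $X_M = e(\tilde{S}) = e(S)$ and $X_{t_0} = e(\tilde{S}_{t_0}) = e(S_{t_0/h})$. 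So it suffices to give a whp uniform upper bound on $|\hat{X}_M - \hat{X}_{t_0}|$.

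By Lemma~\ref{lemma:X^t_martingale}, the increments satisfy $|\hat{X}_{j} - \hat{X}_{j-1}| \leq \delta_j \in \{0,1\}$ for all $j \in \{t_0+1,\ldots,M\}$, whence $\sum_{j=t_0+1}^{M} \delta_j^2 = \sum_{j=t_0+1}^{M} \delta_j \leq M - t_0 \leq M = hn$. The Azuma--Hoeffding inequality (Lemma~\ref{lemma:Azuma}), applied in both tail directions, therefore yields
\begin{equation*}
\Pr\!\left[|\hat{X}_M - \hat{X}_{t_0}| \geq x\right] \leq 2 \exp\!\left\{-\frac{x^2}{2hn}\right\} \qquad \text{for every } x > 0.
\end{equation*}
Choosing $x = B_\varepsilon\, M/\sqrt{h} = B_\varepsilon\, \sqrt{h}\,n$ with $B_\varepsilon = \sqrt{(1+\varepsilon)\, 2\ln 2}$, the exponent becomes $-x^2/(2hn) = -(1+\varepsilon)n\ln 2$, so the tail bound reduces to $2 \cdot 2^{-(1+\varepsilon)n}$.

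Finally, since the total number of subsets $S \subseteq V$ is $2^n$, the union bound gives a failure probability of at most
\begin{equation*}
2^n \cdot 2 \cdot 2^{-(1+\varepsilon)n} = 2 \cdot 2^{-\varepsilon n} = o(1),
\end{equation*}
which finishes the proof. The main conceptual ingredient is the identification of $B_\varepsilon$ with the $\sqrt{2\ln 2}$ constant needed to beat the $2^n$ union bound; there is no serious obstacle here because Lemma~\ref{lemma:X^t_martingale} already delivers the (rather tight) $\delta_t$ increment bound, and the crude estimate $\sum \delta_j^2 \leq M$ is all that is required. If one wanted to track the dependence on $|S|$, the sharper bound $\sum_{j=t_0+1}^{M}\delta_j^2 \leq h|S|$ could be used in place of $M$, but no such refinement is needed for the present statement.
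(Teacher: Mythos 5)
Your proof is correct and follows essentially the same route as the paper: both apply the Azuma--Hoeffding inequality to the martingale $\hat{X}_{t_0},\ldots,\hat{X}_M$ from Lemma~\ref{lemma:X^t_martingale} with increment bound $\delta_j$, compute the exponent to obtain a $2^{-(1+\varepsilon)n}$ tail, and close with a union bound over the $2^n$ subsets. Your explicit telescoping identity $\hat{X}_M - \hat{X}_{t_0} = X_M - X_{t_0} - \sum_{i=t_0+1}^{M}\delta_i(Z_{i-1}+1)/(2i-1)$ and the observation that divisibility of $t_0$ by $h$ gives $X_{t_0} = e(S_{t_0/h})$ is exactly the translation step the paper carries out (somewhat more verbosely, via the equivalent-event reformulation), so the two arguments match.
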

\begin{proof}
	Throughout the proof we again refer to the process of constructing the random tree $T_{hn}$. For $t \in [M]$ let $\mathcal{F}_t$ be a $\sigma$-algebra associated with all the events that happened till time $t$. For $S \subseteq V$ let $X_t = e_{T_t}(\tilde{S}_t)$ and $D_t = \E[X_{t}-X_{t-1}|\mathcal{F}_{t-1}]$. Fix $\varepsilon>0$ and for $j \in \{t_0, t_0+1, \ldots, M\}$ and $S \subseteq V$ consider
	\[
	\hat{X}_j = X_j - \sum_{i=2}^{j} D_i.
	\]
	By Lemma \ref{lemma:X^t_martingale} we know that $\hat{X}_{t_0}, \ldots, \hat{X}_M$ is a martingale with respect to the filtration $\mathcal{F}_{t_0} \subseteq \ldots \subseteq \mathcal{F}_M$ such that $|\hat{X}_{j} - \hat{X}_{j-1}| \leq {\delta}_j$. Moreover 
	\[
	\sum_{j=t_0}^{M} {\delta}_j^2 = \sum_{j=t_0}^{M} {\delta}_j \leq M. %|\tilde{S}|.
	\]
	Thus applying Azuma-Hoeffding inequality (Lemma \ref{lemma:Azuma}) to $\hat{X}_{t_0}, \ldots, \hat{X}_M$ with $b_j = {\delta}_j$ and $x = {B}_{\varepsilon} \frac{M}{\sqrt{h}}$, where ${B}_{\varepsilon} = \sqrt{(1+\varepsilon)2\ln{2}}$ we get
	\begin{equation} \label{eq:hat_XM_concentr+}
		\begin{split}
			\Pr\left[\hat{X}_M \geq \hat{X}_{t_0} + {B}_{\varepsilon} \frac{M}{\sqrt{h}}\right] & \leq \exp\left\{-\frac{(1+\varepsilon) 2\ln{2} \cdot M^2/h}{2 M}\right\} = 2^{-(1+\varepsilon)n}.
		\end{split}
	\end{equation}
	Let us now analyze the event $\left\{\hat{X}_M \geq \hat{X}_{t_0} + {B}_{\varepsilon} \frac{M}{\sqrt{h}}\right\}$. It is equivalent to 
	\[
	\left\{X_M - \sum_{i=2}^{M}D_i \geq X_{t_0} - \sum_{i=2}^{t_0} D_i + {B}_{\varepsilon} \frac{M}{\sqrt{h}}\right\}
	\]
	which is
	\[
	\left\{X_M \geq X_{t_0} + \sum_{i=t_0+1}^{M} D_i + {B}_{\varepsilon} \frac{M}{\sqrt{h}}\right\}
	\]
	and, by the definition of $D_i$ (check the proof of Lemma \ref{lemma:X^t_martingale}),
	\[ %\label{eq:X_M} % maybe remove the label
	\left\{X_M \geq X_{t_0} + \sum_{i=t_0+1}^{M} {\delta}_i\frac{Z_{i-1}+1}{2i-1} + {B}_{\varepsilon} \frac{M}{\sqrt{h}}\right\}.
	\]
	Thus, by (\ref{eq:hat_XM_concentr+}), we get
	\begin{equation} \label{eq:XM_lower}
		\Pr\left[X_M - \left( X_{t_0} + \sum_{i=t_0+1}^{M} {\delta}_i\frac{Z_{i-1}+1}{2i-1} \right)  \geq {B}_{\varepsilon} \frac{M}{\sqrt{h}}\right] \leq 2^{-(1+\varepsilon)n}.
	\end{equation}
	Acting analogously for the martingale $-\hat{X}_{t_0}, \ldots, -\hat{X}_M$ we get the opposite bound (check the Appendix~\ref{app:B} for the details).
\end{proof}

We are ready to prove Theorem \ref{thm:edges_concentration}.
\begin{proof}[Proof of Theorem~\ref{thm:edges_concentration}]
	Throughout the proof we again refer to the process of constructing the random tree $T_{hn}$. Fix $\varepsilon >0$. For $t \in [M]$ and $S \subseteq V$ let $X_t = e_{T_t}(\tilde{S}_t)$ and $Z_t = \vol_{T_{t}}(\tilde{S}_t)$.
	For $t_0 = h\lfloor \log_2{n} \rfloor$ we define the events $\mathcal{H}$, $\mathcal{E}$ and $\mathcal{V}$ as follows 
	\[
	\begin{split}
		\mathcal{H} = \left\{ \vphantom{\frac{C}{\sqrt{h}}}\forall S \right. & \subseteq V 
		%			\\
		%			& 
		\left. \left| e(S) - \mu(\tilde{S})^2 \right| \leq (1+\varepsilon)g_{\mathcal{E}}(h) \frac{M}{\sqrt{h}} \right\},\\
		\mathcal{E} = \left\{ \vphantom{\frac{C}{\sqrt{h}}}\forall S \right. & \subseteq V 
		%			\\
		%			& 
		\left. \left|e(S) - \left(e(S_{t_0/h}) + \sum_{i=t_0+1}^{M} {\delta}_i \frac{Z_{i-1}+1}{2i-1} \right)\right| \leq {B}_{\varepsilon} \frac{M}{\sqrt{h}} \right\},\\
		\mathcal{V} = \left\{  \vphantom{\frac{C}{\sqrt{h}}} \forall t \right.  & \in \{t_0, \ldots, M\} \, \forall S \subseteq V 
		%			\\
		%			& 
		\left. |Z_t - 2 \sqrt{t} \, \mu(\tilde{S}_{t})| \leq  (1+\varepsilon)g_{\mathcal{V}}(h) \frac{t}{\sqrt{h}}\right\},
	\end{split}
	\]
	where ${B}_{\varepsilon} = \sqrt{(1+\varepsilon)2\ln{2}}$. By lemmas \ref{lemma:Zis_all_concentrated} and \ref{lemma:XM_as_Zis} we know that $\Pr[\mathcal{E} \cap \mathcal{V}] = 1-o(1)$. Thus it is enough to show that the event $\mathcal{E} \cap \mathcal{V}$ implies the event $\mathcal{H}$.
	%Since $\Pr[\mathcal{H}] \geq \Pr[\mathcal{H}|\mathcal{E}_1 \cap \mathcal{E}_2]\Pr[\mathcal{E}_1 \cap \mathcal{E}_2]$ it will be enough to show that $\Pr[\mathcal{H}|\mathcal{E}_1 \cap \mathcal{E}_2] = 1-o(1)$.
	
	Assume that $\mathcal{E} \cap \mathcal{V}$ holds. By lemmas \ref{lemma:only_delta}, \ref{lemma:Zi_first_part}, and \ref{lemma:Zi_second_part}, for any $S \subseteq V$, as $t_0=O(\ln M)$, we can write
	\[
	\begin{split}
		\sum_{i=t_0+1}^{M} 
		{\delta}_i & \frac{Z_{i-1}+1}{2i-1} 
		\leq \sum_{i=t_0+1}^{M} \frac{{\delta}_i}{2i-1} + \sum_{i=t_0+1}^{M} {\delta}_i \frac{2 \sqrt{i-1} \mu(\tilde{S}_{i-1})}{2i-1} + \sum_{i=t_0+1}^{M} {\delta}_i \frac{(1+\varepsilon)g_{\mathcal{V}}(h) \frac{i-1}{\sqrt{h}}}{2i-1} 
		\\
		\leq & ~\frac{\pi}{2} \sum_{i=1}^{M} ({\delta}_i c_{i-1})^2 + \frac{\pi}{2} \sum_{i=1}^{M} \left( {\delta}_i c_{i-1} \sum_{j=1}^{i-1} {\delta}_j c_{j-1} \right) %\\
		%& \quad \quad \quad \quad
		+ \frac{(1+\varepsilon)g_{\mathcal{V}}(h)}{2} \frac{M-t_0}{\sqrt{h}} + O(\ln{M}) \\
		= &\left(\frac{\sqrt{\pi}}{2} \right)^2\left( \sum_{i=1}^{M} ({\delta}_i c_{i-1})^2 + 2 \sum_{i=1}^{M} \left( {\delta}_i c_{i-1} \sum_{j=1}^{i-1} {\delta}_j c_{j-1} \right) \right) + \frac{\pi}{4} \sum_{i=1}^{M} ({\delta}_i c_{i-1})^2 \\
		& + \frac{(1+\varepsilon)g_{\mathcal{V}}(h)}{2} \frac{M-t_0}{\sqrt{h}} + O(\ln{M}) \\
		\leq & ~\mu(\tilde{S}_M)^2 + \frac{\pi}{4} \sum_{i=1}^{M} \frac{1}{\pi(i-1)} + \frac{(1+\varepsilon)g_{\mathcal{V}}(h)}{2} \frac{M-t_0}{\sqrt{h}} + O(\ln{M}) \\
		= & ~\mu(\tilde{S}_M)^2 + \frac{(1+\varepsilon)g_{\mathcal{V}}(h)}{2} \frac{M-t_0}{\sqrt{h}} + O(\ln{M}),
	\end{split}
	\]
	where the last inequality follows from Lemma \ref{lemma:ct}, the fact that ${\delta}_i \in \{0,1\}$, and the fact that 
	\[
	\mu(\tilde{S}_M)^2 = \left( \frac{\sqrt{\pi}}{2} \sum_{i=1}^{M} {\delta}_i c_{i-1} \right)^2 =  \left(\frac{\sqrt{\pi}}{2} \right)^2\left( \sum_{i=1}^{M} ({\delta}_i c_{i-1})^2 + 2 \sum_{i=1}^{M} \left( {\delta}_i c_{i-1} \sum_{j=1}^{i-1} {\delta}_j c_{j-1} \right) \right).
	\]
	Analogously, again by lemmas \ref{lemma:only_delta}, \ref{lemma:Zi_first_part}, and \ref{lemma:Zi_second_part}, for any $S \subseteq V$ we can get
	\[
	\begin{split}
		\sum_{i=t_0+1}^{M}  {\delta}_i\frac{Z_{i-1}+1}{2i-1}   & \geq \sum_{i=t_0+1}^{M} \frac{{\delta}_i}{2i-1} + \sum_{i=t_0+1}^{M} {\delta}_i \frac{2 \sqrt{i-1} \mu(\tilde{S}_{i-1})}{2i-1} - \sum_{i=t_0+1}^{M} {\delta}_i \frac{(1+\varepsilon)g_{\mathcal{V}}(h) \frac{i-1}{\sqrt{h}}}{2i-1} \\
		& \geq \mu(\tilde{S}_M)^2 - \frac{(1+\varepsilon)g_{\mathcal{V}}(h)}{2} \frac{M-t_0}{\sqrt{h}} - O(\ln{M}).
	\end{split}
	\]
	Note that for any $S \subseteq V$ we have $e(S_{t_0/h}) \leq t_0$ and recall that $t_0 = h \lfloor \log_2{n} \rfloor$. Thus for all $S \subseteq V$, for sufficiently large $n$ we may write (recall that $\mathcal{E} \cap \mathcal{V}$ holds)
	\[
	\begin{split}
		e(S) & \leq t_0  + {B}_{\varepsilon} \frac{M}{\sqrt{h}} + \mu(\tilde{S})^2 + \frac{(1+\varepsilon)g_{\mathcal{V}}(h)}{2} \frac{M-t_0}{\sqrt{h}} + O(\ln{M}) \\
		& \leq \mu(\tilde{S})^2 + (1+\varepsilon) \sqrt{2 \ln{2}}\frac{M}{\sqrt{h}} + \frac{(1+\varepsilon)g_{\mathcal{V}}(h)}{2} \frac{M}{\sqrt{h}} 
		%& = \mu(\tilde{S})^2 + \left(\frac{2B_{\varepsilon} + C_{\varepsilon} \sqrt{\ln h} + 2}{2}\right)  \frac{M}{\sqrt{h}} \\
		= \mu(\tilde{S})^2 + (1+\varepsilon)g_{\mathcal{E}}(h) \frac{M}{\sqrt{h}},
	\end{split}
	\]
	where the term $O(\ln{M}) +t_0$ vanishes after the second inequality as the factor $\sqrt{1+\varepsilon}$ from $B_{\varepsilon}$ is replaced by $(1+\varepsilon)$. Analogously we get
	\[
	\begin{split}
		e(S) & \geq  - {B}_{\varepsilon} \frac{M}{\sqrt{h}} + \mu(\tilde{S})^2 - \frac{(1+\varepsilon)g_{\mathcal{V}}(h)}{2} \frac{M-t_0}{\sqrt{h}} - O(\ln{M}) 
		\geq \mu(\tilde{S})^2 - (1+\varepsilon)g_{\mathcal{E}}(h) \frac{M}{\sqrt{h}}.
	\end{split}
	\]
	This means that for sufficiently large $n$, the event $\mathcal{E} \cap \mathcal{V}$ implies the event $\mathcal{H}$. Thus $\Pr[\mathcal{E} \cap \mathcal{V}] = 1 - o(1)$ implies $\Pr[\mathcal{H}] = 1-o(1)$, and the proof is finished.
\end{proof}
Mimicking the reasoning from the proofs of Lemma \ref{lemma:X^t_martingale}, Lemma \ref{lemma:XM_as_Zis}, and Theorem~\ref{thm:edges_concentration} one can prove Theorem~\ref{thm:edges_between_concentration}. We leave it without the proof details as this would be very repetitive.

%%%%%%%%%%%%%%%%%%%%%%%%%%%%%%%%%%%%%%%%%%%%%%%%%%%%%%%%%%%%%%%%%%%%%

\section{Modularity of {\boldmath{$G_n^h$}} vanishes with {\boldmath{$h$}}} \label{sec:modularity}

Recall that the main result of the paper (Theorem \ref{thm:main})
states that the modularity of a preferential attachment graph $G_n^h$ is with high probability upper bounded by a function tending to $0$ with $h$ tending to infinity. The whole current section is devoted to its proof. 

The first step in the proof of Theorem \ref{thm:main} follows from an interesting general result on modularity by Dinh and Thai.

\begin{lemma}[\cite{DiTh11}, Lemma 1] \label{lemma:only2parts}
	Let $G$ be a graph with at least one edge and let $k \in \mathbb{N} \setminus \{1\}$. Then
	\[
	\modul(G) \leq \frac{k}{k-1} \max_{\mathcal{A}: |\mathcal{A}| \leq k} \modul_{\mathcal{A}}(G).
	\]
	In particular,
	\[
	\modul(G) \leq 2  \max_{\mathcal{A}: |\mathcal{A}| \leq 2} \modul_{\mathcal{A}}(G).
	\]
\end{lemma}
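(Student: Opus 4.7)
The plan is to show that for any partition $\mathcal{A}$ of $V(G)$ there exists a coarsening $\mathcal{A}'$ with at most $k$ nonempty parts such that $\modul_{\mathcal{A}'}(G) \geq \frac{k-1}{k}\modul_{\mathcal{A}}(G)$. Taking the maximum of the left-hand side over all partitions $\mathcal{A}$ then yields the first inequality, and the second is just the special case $k=2$.

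The construction I would use is probabilistic. If $|\mathcal{A}| \leq k$ there is nothing to do, so assume $\mathcal{A} = \{S_1, \ldots, S_m\}$ with $m > k$. Assign each part $S_i$ independently and uniformly at random to one of $k$ bins via a random map $f\colon [m] \to [k]$, and let $Y_j = \bigcup_{i\colon f(i) = j} S_i$. The resulting collection $\mathcal{A}' = \{Y_1, \ldots, Y_k\}$ is a (random) partition of $V(G)$ with at most $k$ nonempty parts, and for any two distinct indices $i, l$ we have $\Pr[f(i) = f(l)] = 1/k$.

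The key step is then a short linearity-of-expectation computation. For the edge contribution, $\sum_j e(Y_j)$ equals $\sum_i e(S_i)$ plus the cross-edges $e(S_i, S_l)$ for pairs of merged parts; taking expectations scales the cross-edge sum by $1/k$, and the identity $\sum_i e(S_i) + \sum_{i<l} e(S_i, S_l) = e(G)$ simplifies the result. The same pattern occurs for $\sum_j \vol(Y_j)^2$ after expanding the squares and using $\sum_i \vol(S_i) = \vol(G)$, so that the off-diagonal terms again pick up a factor of $1/k$. Subtracting the two expressions and regrouping gives exactly $\E[\modul_{\mathcal{A}'}(G)] = \frac{k-1}{k}\,\modul_{\mathcal{A}}(G)$, and by the first moment method some realization of $f$ produces a partition $\mathcal{A}'$ meeting the required inequality.

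The only minor subtlety — not really an obstacle — is that some bins may end up empty, so $|\mathcal{A}'|$ can be strictly less than $k$; this is harmless, since the modularity score is a sum over nonempty parts only. The proof therefore boils down to a single clean algebraic identity together with the probabilistic method.
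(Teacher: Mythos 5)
Your proof is correct. Note that the paper does not give its own proof of this statement; it is cited from Dinh and Thai, and your probabilistic merging argument (randomly assign the $m > k$ parts to $k$ bins, compute the expected modularity of the coarsened partition by linearity of expectation, and observe that the $\frac{1}{k}$ terms from the edge side and the degree side cancel to give exactly $\E[\modul_{\mathcal{A}'}(G)] = \frac{k-1}{k}\modul_{\mathcal{A}}(G)$) is the standard one for this lemma. Your remark about empty bins is also handled correctly: an empty part contributes $0$ to the modularity score, so dropping or keeping it is immaterial, and $|\mathcal{A}'| \leq k$ is exactly what is required.
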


\begin{corollary} \label{cor:mod_upperbound}
	Let $G=(V,E)$ be a graph with at least one edge. Then 
	%		\[
	%		\modul(G) \leq \max \left\{0, \quad 4 \cdot \max_{S \subseteq V} \left(\frac{e(S)}{e(G)}-\frac{\vol(S)^2}{\vol(G)^2}\right)\right\}.
	%		\]
	\[
	\modul(G) \leq 4 \cdot \max_{S \subseteq V} \left(\frac{e(S)}{e(G)}-\frac{\vol(S)^2}{\vol(G)^2}\right).
	\]
\end{corollary}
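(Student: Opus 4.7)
The plan is to deduce this directly from Lemma \ref{lemma:only2parts} by analyzing the at-most-2-part case in detail. The corollary is essentially an unpacking of Dinh--Thai with $k=2$, so no deep new idea is needed; the main thing to watch is the bookkeeping on how the modularity score of a bipartition splits.

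First I would apply Lemma \ref{lemma:only2parts} with $k=2$, which yields
\[
\modul(G) \leq 2 \max_{\mathcal{A}:\,|\mathcal{A}|\leq 2} \modul_{\mathcal{A}}(G).
\]
There are two cases for $\mathcal{A}$ to handle. If $|\mathcal{A}|=1$, then necessarily $\mathcal{A}=\{V\}$, and the single summand in the modularity score equals $\tfrac{e(V)}{e(G)} - \bigl(\tfrac{\vol(V)}{\vol(G)}\bigr)^2 = 1 - 1 = 0$. So this case contributes $0$ and can be absorbed into the outer $\max\{0,\cdot\}$.

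If $|\mathcal{A}|=2$, then $\mathcal{A}=\{S,V\setminus S\}$ for some nonempty proper $S\subsetneq V$, and the modularity score decomposes as
\[
\modul_{\mathcal{A}}(G) = \left(\frac{e(S)}{e(G)} - \left(\frac{\vol(S)}{\vol(G)}\right)^2\right) + \left(\frac{e(V\setminus S)}{e(G)} - \left(\frac{\vol(V\setminus S)}{\vol(G)}\right)^2\right).
\]
Each of the two bracketed summands has the form $\frac{e(T)}{e(G)} - \bigl(\frac{\vol(T)}{\vol(G)}\bigr)^2$ with $T\subseteq V$, hence each is bounded above by $\max_{T\subseteq V}\bigl(\frac{e(T)}{e(G)} - \bigl(\frac{\vol(T)}{\vol(G)}\bigr)^2\bigr)$. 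Therefore
\[
\modul_{\mathcal{A}}(G) \leq 2\max_{S\subseteq V}\left(\frac{e(S)}{e(G)}-\frac{\vol(S)^2}{\vol(G)^2}\right).
\]

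Combining the two cases and the factor of $2$ coming from Lemma \ref{lemma:only2parts} yields
\[
\modul(G) \leq 4\max_{S\subseteq V}\left(\frac{e(S)}{e(G)}-\frac{\vol(S)^2}{\vol(G)^2}\right),
\]
and since modularity is nonnegative by convention, the bound is consistent with the outer $\max\{0,\cdot\}$ in the statement (in fact the inner maximum is already $\geq 0$, attained by $S=V$, so the outer $\max\{0,\cdot\}$ is a safety net rather than a genuine case split). There is no real obstacle here beyond the decomposition step; the only thing to be careful about is that the two summands in the bipartition modularity score are bounded by the \emph{same} maximum over subsets, which is what produces the factor $2\cdot 2 = 4$.
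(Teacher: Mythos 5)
Your proof is correct and matches the paper's approach essentially verbatim: both invoke the Dinh--Thai lemma with $k=2$, observe that the $|\mathcal{A}|=1$ case contributes $0$, and bound each of the two summands in the bipartition score by the same maximum over subsets to obtain the factor $4$. Your extra observation that the inner maximum is already nonnegative (attained at $S=V$) is also correct and shows the outer $\max\{0,\cdot\}$ is harmless.
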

\begin{proof}
	%First, note that for $\mathcal{A}$ such that $|\mathcal{A}|=1$ we have $\modul_{\mathcal{A}}(G)=0$. Next, 
	Consider $2$-element partitions of $V$. We have 
	\[
	\begin{split}
		\max_{\mathcal{A}: |\mathcal{A}| = 2} \modul_{\mathcal{A}}(G) & = \max_{\mathcal{A} = \{S, V \setminus S\}} \left(\frac{e(S)}{e(G)}-\frac{\vol(S)^2}{\vol(G)^2}+\frac{e(V \setminus S)}{e(G)}-\frac{\vol(V \setminus S)^2}{\vol(G)^2}\right) \\
		& \leq 2 \cdot \max_{S \subseteq V} \left(\frac{e(S)}{e(G)}-\frac{\vol(S)^2}{\vol(G)^2}\right).
	\end{split}
	\]
	The conclusion follows by Lemma \ref{lemma:only2parts}. (Note that for $S=V$ the argument of the maximum equals $0$ thus the bound is non-negative.)
\end{proof}

The above corollary frees us from considering all the partitions of $V$ when analyzing modularity of $G_n^h$. We can simply concentrate on upper bounding the values of $\left(\frac{e(S)}{e(G_n^h)}-\frac{\vol(S)^2}{\vol(G_n^h)^2}\right)$ over all $S \subseteq V$. To do so, we use the concentration results for $e(S)$ and $\vol(S)$ obtained in Section \ref{sec:concentrations}.

\begin{proof} [Proof of Theorem \ref{thm:main}]
	Fix $\varepsilon>0$ and let $g_{\mathcal{E}}(h) = \frac{g_{\mathcal{V}}(h)}{2} + \sqrt{2 \ln{2}}$. Let us define the events $\mathcal{H}$, $\mathcal{E}$ and $\mathcal{V}$ as follows
	\[
	\mathcal{H} = \left\{ \modul(G_n^h) \leq \frac{(1+\varepsilon)f(h)}{\sqrt{h}}\right\},
	\]
	\[
	\mathcal{E} = \left\{\forall S \subseteq V \, e(S) \leq \mu(\tilde{S})^2 +  (1+\varepsilon)g_{\mathcal{E}}(h) \frac{M}{\sqrt{h}}	\right\},
	\]
	\[
	\mathcal{V} = \left\{\forall S \subseteq V \, \vol(S) \geq 2\sqrt{M}\mu(\tilde{S}) - (1+\varepsilon)g_{\mathcal{V}}(h)\frac{M}{\sqrt{h}}\right\}.
	\]
	By Theorem \ref{thm:vol_concentration} and Theorem~\ref{thm:edges_concentration} we know that $\Pr[\mathcal{E} \cap \mathcal{V}] = 1-o(1)$. Thus it is enough to show that the event $\mathcal{E} \cap \mathcal{V}$ implies the event $\mathcal{H}$.
	
	Assume that $\mathcal{E} \cap \mathcal{V}$ holds. Recall that $e(G_n^h) = M$ and $\vol(G_n^h)=2M$. For any $S \subseteq V$ we may write
	\[
	\begin{split}
		\frac{e(S)}{e(G_n^h)} &-\frac{\vol(S)^2}{\vol(G_n^h)^2}  = \frac{4Me(S) - \vol(S)^2}{4M^2} \\
		& \leq \frac{1}{4M^2}\left(4M\mu(\tilde{S})^2 +  4 (1+\varepsilon)g_{\mathcal{E}}(h) \frac{M^2}{\sqrt{h}} %\right.\\
		%& \quad \quad \quad \quad \quad \quad \quad \quad \quad \left. 
		- \left( 2\sqrt{M}\mu(\tilde{S}) - (1+\varepsilon)g_{\mathcal{V}}(h) \frac{M}{\sqrt{h}} \right)^2 \right) \\
		& = \frac{ (1+\varepsilon)g_{\mathcal{E}}(h)}{ \sqrt{h}} + \frac{\mu(\tilde{S})(1+\varepsilon)g_{\mathcal{V}}(h)}{\sqrt{Mh}} - \frac{(1+\varepsilon)^2 g_{\mathcal{V}}(h)^2}{4 h}\\
		& \leq \frac{ (1+\varepsilon)\left(g_{\mathcal{E}}(h) + g_{\mathcal{V}}(h) - g_{\mathcal{V}}(h)^2/(4\sqrt{h})\right)}{ \sqrt{h}},
	\end{split}
	\]
	where the last inequality follows from Lemma \ref{lemma:mu} and is valid for sufficiently large~$n$.
	By~Corollary \ref{cor:mod_upperbound}, for sufficiently large $n$ we obtain
	\[
	\begin{split}
		\modul{(G_n^h)} & \leq 4 \cdot \max_{S \subseteq V} \left(\frac{e(S)}{e(G_n^h)}-\frac{\vol(S)^2}{\vol(G_n^h)^2}\right) \\
		%& \leq  \frac{4 \tilde{B}_{\varepsilon} + 2C_{\varepsilon}\sqrt{\ln h}+2 - \left(C_{\varepsilon} \sqrt{\ln{h}}+2\right)^2/\sqrt{h}}{\sqrt{h}} + \frac{4 \mu(\tilde{S})\left(C_{\varepsilon} \sqrt{\ln{h}}+2\right)}{\sqrt{Mh}} \\
		& \leq \frac{ (1+\varepsilon)\left(4g_{\mathcal{E}}(h) + 4g_{\mathcal{V}}(h) - g_{\mathcal{V}}(h)^2/\sqrt{h}\right)}{ \sqrt{h}} = \frac{ (1+\varepsilon)f(h)}{ \sqrt{h}}.
	\end{split}
	\] 
	We got that for sufficiently large $n$ the event $\mathcal{E} \cap \mathcal{V}$ implies the event~$\mathcal{H}$ thus $\Pr[\mathcal{E} \cap \mathcal{V}] = 1 - o(1)$ implies $\Pr[\mathcal{H}] = 1-o(1)$ and the proof is finished.
\end{proof}

We finish by proving  Corollary~\ref{cor:main}.

\begin{proof}[Proof of Corollary \ref{cor:main}]
	The corollary follows from Theorem \ref{thm:main} by considering  $\tilde{f}(h) = 6 g_{\mathcal{V}}(h) + 4 \sqrt{2 \ln{2}}$ instead of $f(h)$ in the upper bound (note that $3 \sqrt{2\ln{2}} \leq 3.54$, $(8/9)\ln{2} \leq 0.62$ and $ 4 \ln{2} + 12 + 4 \sqrt{2\ln{2}} \leq 19.49$).
\end{proof}

%%%%%%%%%%%%%%%%%%%%%%%%%%%%%%%%%%%%%%%%%%%%%%%%%%%%%%%%%%%%%%%%%%%%%

\section{Concluding remarks} \label{sec:conclusion}

We showed that the modularity of a preferential attachment graph $G_n^h$ is, with high probability, upper bounded by a function of the order $\Theta(\sqrt{\ln{h}}/\sqrt{h})$. This proves Conjecture \ref{conj:mod->0} but means that Conjecture \ref{conj:mod_theta}, saying that modularity of $G_n^h$ is, with high probability, of the order $\Theta(1/\sqrt{h})$, still remains open. Note that the term $\Theta(1/\sqrt{h})$ can also be seen as $\Theta(1/\sqrt{\bar{d}_h})$, where $\bar{d}_h$ states for the average vertex degree in $G_n^h$. Such behavior of modularity has already been reported in other random graphs. For $G_{n,r}$ being a random $r$-regular graph it is known that whp $\modul(G_{n,r}) = \Theta(1/\sqrt{r})$ (see \cite{DiSk18}). 
For binomial random graph $G(n,p)$ it is known that $\modul(G(n,p)) = \Theta(1/\sqrt{np})$  when $np \geq 1$ and $p$ is bounded below $1$ (see \cite{DiSk20,RybarczykSulkowska_Gnp}). These might be premises supporting Conjecture \ref{conj:mod_theta}.

To the best of our knowledge, this paper provides the first concentration results for $\vol(S)$, $e(S)$, and $e(S,V\setminus S)$ in $G_n^h$, where $S$ can be an arbitrary subset of $V$. The analogous results obtained so far, e.g. in \cite{Bol_Rio_diameter}, \cite{FrPePrRe20} or \cite{prokhorenkova2017modularity_internet_Math} always addressed ``compact'' subsets of vertices, i.e., sets of the form $\{i, i+1, i+2, \ldots, j\}$. (In Lemma~4 of \cite{FrPePrRe20} the authors investigate the volume of any set $S \subseteq [t]$ of size $1 \leq k \leq t$ at time $t$ but in fact in their proof the volume of $S$ is upper bounded by the volume of the $k$ ``oldest'' vertices in $[t]$, i.e., the volume of the set $[k]$.) In this paper more accurate analysis was possible thanks to introducing indicator functions $\delta_i^{\tilde{S}}$ in Definition \ref{def:mu_delta_c}.  We believe that the proof methods leading to results obtained in Section~\ref{sec:concentrations} might help in the future to get bounds for edge expansion or conductance of $G_n^h$ that are stronger than those currently known.

\vspace{20pt}

\subsection*{Acknowledgments}This research was funded in whole or in part by National Science Centre, Poland, grant OPUS-25 no 2023/49/B/ST6/02517. For the purpose of Open Access, the authors have applied a CC-BY public copyright licence to any Author Accepted Manuscript (AAM) version arising from this submission.

%%
%% Bibliography
%%

%% Please use bibtex, 

\bibliographystyle{plain}
\bibliography{modularity_PA}

\newpage
\appendix

\section{Auxiliary lemmas - proofs}\label{sec:itemStyles} \label{app:A}

%	The proofs in this section utilize Stirling's approximation.
%	
%	\begin{lemma}[Stirling's approximation, \cite{Ro55}] \label{lemma:Stirling}
	%		Let $n \in \mathbb{N}$. Then
	%		\[
	%		\sqrt{2 \pi n} \left(\frac{n}{e}\right)^n \exp\left\{\frac{1}{12 n+1}\right\} <n!< \sqrt{2 \pi n} \left(\frac{n}{e}\right)^n \exp\left\{\frac{1}{12 n}\right\}.
	%		\]
	%	\end{lemma}
%	
%	\begin{proof}[Proof of Lemma~\ref{lemma:ct}]
	%		By Stirling's approximation (Lemma \ref{lemma:Stirling}) we get
	%		\[
	%		c_j = \prod_{i=1}^{j}\frac{2i-1}{2i} = \frac{(2j)!}{2^{2j}(j!)^2} \leq \frac{1}{\sqrt{\pi j}}\exp\left\{\frac{1}{24 j}-\frac{2}{12j+1}\right\} \leq \frac{1}{\sqrt{\pi j}}.
	%		\]
	%		Analogously, since $\frac{1}{12 \cdot 2j +1} \geq \frac{1}{12\cdot 2j}-\frac{1}{144 (2j)^2}$,
	%		\[
	%		c_j \geq \frac{1}{\sqrt{\pi j}} \exp\left\{\frac{1}{24 j}-\frac{1}{4 \cdot 144 j^2}-\frac{2}{12 j}\right\} = \frac{1}{\sqrt{\pi j}} \exp\left\{-\frac{1}{8 j}-\frac{1}{4 \cdot 144 j^2}\right\}.
	%		\]
	%	\end{proof}
%
%	\begin{proof}[Proof of Lemma~\ref{lemma:mu}]
	%		By Lemma \ref{lemma:ct} we get
	%		\[
	%		\begin{split}
		%			\mu(\tilde{S}_t) & \leq \frac{\sqrt{\pi}}{2} \sum_{j=1}^{t} c_{j-1} \leq \frac{\sqrt{\pi}}{2} + \frac{1}{2} \sum_{j=2}^{t} \frac{1}{\sqrt{j-1}} \leq \frac{\sqrt{\pi}}{2} + \frac{1}{2} + \int_{1}^{t} \frac{1}{2 \sqrt{j}} \,dj \leq \sqrt{t} + \frac{1}{2}.
		%		\end{split}
	%		\]
	%	\end{proof}

\begin{proof}[Proof of Lemma~\ref{lemma:only_delta}]
	First note that by Lemma~\ref{lemma:harmonic} 
	\[
	\sum_{i=1}^{t_0} \frac{{\delta}_i}{2i-1}\le \sum_{i=1}^{t_0} \frac{1}{2i-1}=O(\ln t_0). 
	\]
	Therefore
	\begin{equation} \label{eq:lnt0}
		\sum_{i=t_0+1}^{M} \frac{{\delta}_i}{2i-1}=\sum_{i=1}^{M} \frac{{\delta}_i}{2i-1}+O(\ln t_0).
	\end{equation}
	For $i \in \{2, \ldots, M\}$, by Lemma \ref{lemma:ct} we have
	\[
	\frac{1}{2i-1} \leq \frac{1}{2(i-1)} \leq \frac{\pi}{2} c_{i-1}^2 (1+O(1/i))
	\]
	and note that the constant hidden in $O(1/i)$ is the same for all $i$'s. Therefore, by the fact that ${\delta}_i \in\{0,1\}$ and  $c_i \leq 1$,
	\[ 
	\begin{split}
		\sum_{i=1}^{M} \frac{{\delta}_i}{2i-1} & \leq \delta_1 + \sum_{i=2}^{M} \frac{\pi}{2} {\delta}_i c_{i-1}^2 + \frac{\pi}{2} \sum_{i=2}^{M} O(1/i) \leq \frac{\pi}{2} \sum_{i=1}^{M} ({\delta}_i c_{i-1})^2  + O(\ln{M}),
	\end{split}
	\]
	which, together with (\ref{eq:lnt0}) and the fact that $t_0 \in [M]$, yields the desired upper bound.
	
	On the other hand, for $i \in \{2, \ldots, M\}$, by Lemma \ref{lemma:ct} we have
	\[
	\frac{1}{2(i-1)} \geq \frac{\pi}{2} c_{i-1}^2
	\]
	thus
	\[
	\begin{split}
		\sum_{i=1}^{M} \frac{{\delta}_i}{2i-1} 
		& \geq \sum_{i=2}^{M} \frac{2i-2}{2i-1} \frac{{\delta}_i}{2i-2} = \sum_{i=2}^{M} \frac{{\delta}_i}{2(i-1)}-\sum_{i=2}^{M}\frac{{\delta}_i}{(2i-1)(2i-2)}
		\\
		&\geq \sum_{i=1}^{M} \frac{\pi}{2} {\delta}_i c_{i-1}^2 - O(\ln{M})
		= \frac{\pi}{2} \sum_{i=1}^{M} ({\delta}_i c_{i-1})^2  - O(\ln{M}),
	\end{split}
	\]
	which, together with (\ref{eq:lnt0}) and the fact that $t_0 \in [M]$, yields the desired lower bound.
\end{proof}

\begin{proof}[Proof of Lemma~\ref{lemma:Zi_first_part}]
	First note that by Lemma~\ref{lemma:mu}
	\[
	\sum_{i=1}^{t_0} {\delta}_i \frac{2 \sqrt{i-1} \mu(\tilde{S}_{i-1})}{2i-1}\le \sum_{i=1}^{t_0} \frac{2 \sqrt{i-1} (\sqrt{i-1}+\frac12)}{2i-1}\le 2t_0.
	\]
	Therefore
	\begin{equation} \label{eq:t0}
		\sum_{i=t_0+1}^{M} {\delta}_i \frac{2 \sqrt{i-1} \mu(\tilde{S}_{i-1})}{2i-1}
		=\sum_{i=1}^{M} {\delta}_i \frac{2 \sqrt{i-1} \mu(\tilde{S}_{i-1})}{2i-1}+O(t_0).
	\end{equation}
	Moreover, for $i \in \{2, \ldots, M\}$, by Lemma \ref{lemma:ct} we have 
	\[
	\frac{1}{\sqrt{i-1}} \leq \sqrt{\pi} c_{i-1} (1+O(1/i)) \quad \quad \textnormal{and} \quad \quad 	c_{i-1} \leq \frac{1}{\sqrt{\pi} \sqrt{i-1}}
	\]
	and note that the constant hidden in $O(1/i)$ is the same for all $i$'s. Therefore, by the definition of $\mu(\tilde{S}_i)$, Lemma \ref{lemma:mu}, and the fact that ${\delta}_i \in \{0,1\}$
	\[
	\begin{split}
		\sum_{i=1}^{M} {\delta}_i & \frac{2 \sqrt{i-1} \mu(\tilde{S}_{i-1})}{2i-1} = \sum_{i=2}^{M} {\delta}_i \frac{2 \sqrt{i-1} \mu(\tilde{S}_{i-1})}{2i-1} \leq \sum_{i=2}^{M} {\delta}_i \frac{\mu(\tilde{S}_{i-1})}{\sqrt{i-1}}\\
		& \leq \sum_{i=2}^{M} {\delta}_i \sqrt{\pi} c_{i-1} \mu(\tilde{S}_{i-1}) (1+O(1/i))\\
		& \leq \sum_{i=2}^{M} \left( {\delta}_i \sqrt{\pi} c_{i-1} \frac{\sqrt{\pi}}{2}\sum_{j=1}^{i-1} {\delta}_j c_{j-1} \right) +  \sum_{i=2}^{M} \sqrt{\pi} \frac{\sqrt{i-1} + \frac{1}{2}}{\sqrt{\pi}\sqrt{i-1}} O(1/i) \\
		& = \frac{\pi}{2} \sum_{i=1}^{M} \left( {\delta}_i c_{i-1} \sum_{j=1}^{i-1} {\delta}_j c_{j-1} \right) + O(\ln{M}),
	\end{split}
	\]
	which, together with (\ref{eq:t0}), yields the desired upper bound.
	
	On the other hand, again by Lemma \ref{lemma:ct}, the definition of $\mu(\tilde{S}_i)$, Lemma \ref{lemma:mu}, and the fact that ${\delta}_i \in \{0,1\}$
	\[
	\begin{split}
		\sum_{i=1}^{M} {\delta}_i & \frac{2 \sqrt{i-1} \mu(\tilde{S}_{i-1})}{2i-1} = \sum_{i=2}^{M} \left(1-\frac{1}{2i-1}\right) {\delta}_i \frac{2 \sqrt{i-1} \mu(\tilde{S}_{i-1})}{2i-2}  \\
		& = \sum_{i=2}^{M} {\delta}_i \frac{\mu(\tilde{S}_{i-1})}{\sqrt{i-1}} - \sum_{i=2}^{M}  \frac{\delta_i}{2i-1}\frac{ \mu(\tilde{S}_{i-1})}{\sqrt{i-1}}\\
		& \geq \sum_{i=2}^{M} \left( {\delta}_i \sqrt{\pi} c_{i-1} \frac{\sqrt{\pi}}{2}\sum_{j=1}^{i-1} {\delta}_j c_{j-1} \right) - \sum_{i=2}^{M}  \frac{1}{2i-1}\frac{\sqrt{i-1}+\frac{1}{2}}{\sqrt{i-1}} \\
		& = \frac{\pi}{2} \sum_{i=1}^{M} \left( {\delta}_i c_{i-1} \sum_{j=1}^{i-1} {\delta}_j c_{j-1} \right) - O(\ln{M}),
	\end{split}
	\]
	which, together with (\ref{eq:t0}), yields the desired lower bound.
\end{proof}

\begin{proof}[Proof of Lemma~\ref{lemma:Zi_second_part}]
	By the fact that ${\delta}_i \in \{0,1\}$ we immediately get
	\[
	\sum_{i=t_0+1}^{M} {\delta}_i \frac{C(i-1)}{2i-1} \leq \frac{C}{2} \sum_{i=t_0+1}^{M} {\delta}_i \leq \frac{C}{2} (M-t_0).
	\]
\end{proof}

\newpage

\section{Edge density and volume results for {\boldmath{$G_n^h$}} - supplementary proofs} \label{app:B}

\begin{proof}[Supplement to the proof of Theorem~\ref{thm:vol_concentration_all}]
	To get the opposite bound we repeat the reasoning for the martingale $-\hat{Z}_{t_0}, \ldots, -\hat{Z}_t$. We apply again Freedman's inequality (Lemma \ref{lemma:Freedman}) with $A = \frac{2}{\sqrt{\pi (t/h)}}(1+O(1/t))$, $W = \frac{1}{4 \pi} \ln{h} + O(1/t)$ and $\lambda =  \frac{(1+\varepsilon)\bar{g}_{\mathcal{V}}(h)}{\sqrt{\pi}} \sqrt{t/h}$, where (as before) $\bar{g}_{\mathcal{V}}(h) = g_{\mathcal{V}}(h)-2$. For sufficiently large $t$ and $n$ we get
	\[
	\begin{split}
		\Pr\left[-\hat{Z}_t \vphantom{\frac{{C}_{3\varepsilon}}{\sqrt{\pi}}}\right.& \left. \geq -\hat{Z}_{t_0} + \frac{(1+\varepsilon)\bar{g}_{\mathcal{V}}(h)}{\sqrt{\pi}} \sqrt{t/h} \right] \leq \exp\left\{-\frac{(1+\varepsilon/2)^2\bar{g}_{\mathcal{V}}(h)^2 }{\frac{1}{2} \ln{h} + \frac{4}{3} (1+\varepsilon/2)\bar{g}_{\mathcal{V}}(h)} \cdot \frac{t}{h}\right\},
	\end{split}
	\]
	which for sufficiently large $t$ and $n$ implies
	\begin{equation} \label{eq:prob_hatZ-}
		\Pr\left[\hat{Z}_t \leq \hat{Z}_{t_0} - \frac{(1+\varepsilon)\bar{g}_{\mathcal{V}}(h)}{\sqrt{\pi}}\sqrt{t/h}\right] \leq 2^{-(1+{\varepsilon}/2)t/h}.
	\end{equation}
	Now, the complementary event $\left\{\hat{Z}_t \geq \hat{Z}_{t_0} - \frac{(1+\varepsilon)\bar{g}_{\mathcal{V}}(h)}{\sqrt{\pi}} \sqrt{t/h}\right\}$ is equivalent to
	\[
	\left\{Z_t - \frac{1}{c_t}\sum_{i=1}^{t}{\delta}_i c_{i-1} \geq \frac{c_{t_0}}{c_t} Z_{t_0} - \frac{1}{c_t}\sum_{i=1}^{t_0}{\delta}_i c_{i-1} - \frac{1}{c_t} \frac{(1+\varepsilon)\bar{g}_{\mathcal{V}}(h)}{\sqrt{\pi}} \sqrt{t/h}\right\}.
	\]
	This time by the definition of $\mu(\tilde{S}_t)$ and Lemma \ref{lemma:ct} we have
	\begin{equation} \label{eq:bound-_mu}
		\frac{1}{c_t}\sum_{i=1}^{t}{\delta}_i c_{i-1} \geq 2 \sqrt{t} \mu(\tilde{S}_t),
	\end{equation}
	and by the definition of $\mu(\tilde{S}_t)$, Lemma \ref{lemma:ct} and Lemma \ref{lemma:mu}
	\begin{equation} \label{eq:bound-_mut0}
		\frac{1}{c_t}\sum_{i=1}^{t_0}{\delta}_i c_{i-1} \leq 2 \sqrt{t} \mu(\tilde{S}_{t_0}) e^{\frac{1}{8t}+\frac{1}{4 \cdot 144 t^2}} \leq \frac{2t}{\sqrt{h}} + O(1).
	\end{equation}
	Hence, by (\ref{eq:bound-_mu}), (\ref{eq:bound-_mut0}) and (\ref{eq:bound+_x_Azuma}), the event  $\left\{\hat{Z}_t \geq \hat{Z}_{t_0} - \frac{(1+\varepsilon)\bar{g}_{\mathcal{V}}(h)}{\sqrt{\pi}} \sqrt{t/h}\right\}$  implies
	\[
	\left\{Z_t - 2 \sqrt{t}\mu(\tilde{S}_t) \geq - \frac{2t}{\sqrt{h}} - (1+\varepsilon)\bar{g}_{\mathcal{V}}(h) \frac{t}{\sqrt{h}} - O(1)\right\}.
	\]
	Therefore by (\ref{eq:prob_hatZ-}) for sufficiently large $t$ and $n$ we get
	\begin{equation} \label{eq:P_bound-}
		\Pr\left[\vol_{T_t}(\tilde{S}_t) - 2 \sqrt{t}\mu(\tilde{S}_t) \leq - (1+\varepsilon)g_{\mathcal{V}}(h) \frac{t}{\sqrt{h}} \right] \leq 2^{-(1+{\varepsilon}/2)t/h}.
	\end{equation}
	
	Finally, by (\ref{eq:P_bound+}) and (\ref{eq:P_bound-}), for sufficiently large $t$ and $n$ we may write
	\[
	\Pr\left[\left|\vol_{T_t}(\tilde{S}_t) - 2 \sqrt{t} \mu(\tilde{S}_t)\right| \geq (1+\varepsilon)g_{\mathcal{V}}(h) \frac{t}{\sqrt{h}} \right] \leq 2\cdot 2^{-(1+{\varepsilon/2})t/h}.
	\]
\end{proof}

%	\begin{proof}[Proof of Corollary \ref{cor:vol_conc_for_all_i}]
	%		Fix $\varepsilon>0$. Recall that $\vol_{G_i^h}(S_i)= \vol_{T_{hi}}(\tilde{S}_{hi})$. For $S \subseteq V$ and $i \in \{\lfloor \log_2{n} \rfloor, \ldots, n\}$ define the event $\mathcal{E}_{S,i}$ as follows 
	%		\[
	%		\mathcal{E}_{S,i} = \Bigl\{\left|\vol_{T_{hi}}(\tilde{S}_{hi}) - 2 \sqrt{hi} \, \mu(\tilde{S}_{hi})\right| \leq  (1+\varepsilon)g_{\mathcal{V}}(h) \frac{hi}{\sqrt{h}} \Bigr\}.
	%		\]
	%		For $i \in \{\lfloor \log_2{n} \rfloor, \ldots, n\}$, by Theorem \ref{thm:vol_concentration_all} and the union bound, for sufficiently large $n$ we have
	%		\[
	%		\Pr\left[\exists S \subseteq V \,\, \mathcal{E}_{S,i}^C \right] \leq 2^{i} \cdot 2 \cdot 2^{-(1+\varepsilon/2)i} = 2 \cdot 2^{-(\varepsilon/2) i}.
	%		\]
	%		Indeed, note that $i$ iterates over the vertices of $G_n^h$ and at time $i$ there are $2^i$ possible configurations for $S_i$, thus also for $\tilde{S}_{hi}$. Next, again by the union bound, for sufficiently large $n$ we get
	%		\[
	%		\begin{split}
		%			\Pr\left[\exists i \in \{\lfloor \log_2{n} \rfloor, \ldots, n\} \, \, \exists S \subseteq V \,\, \mathcal{E}_{S,i}^C \right] & \leq \sum_{i=\lfloor \log_2{n} \rfloor}^{n}  2 \cdot 2^{-(\varepsilon/2) i} \\
		%			& \leq \frac{2 \cdot 2^{-\varepsilon \lfloor \log_2{n} \rfloor}}{1-2^{-\varepsilon}} \sim \frac{2}{(1-2^{-\varepsilon}) n^{\varepsilon}},
		%		\end{split}
	%		\]
	%		which implies
	%		\[
	%		\Pr\left[\forall i \in \{\lfloor \log_2{n} \rfloor, \ldots, n\} \, \, \forall S \subseteq V \,\, \mathcal{E}_{S,i} \right] = 1-o(1).
	%		\]
	%	\end{proof}

\begin{proof}[Supplement to the proof of Lemma~\ref{lemma:XM_as_Zis}]
	To get the opposite bound we repeat the reasoning for the martingale $-\hat{X}_{t_0}, \ldots, -\hat{X}_M$. We have
	\[
	\Pr\left[\hat{X}_M \leq \hat{X}_{t_0} - {B}_{\varepsilon} \frac{M}{\sqrt{h}}\right] \leq 2^{-(1+\varepsilon)n},
	\]
	which implies
	\begin{equation} \label{eq:XM_upper}
		\Pr\left[X_M - \left( X_{t_0} + \sum_{i=t_0+1}^{M} {\delta}_i\frac{Z_{i-1}+1}{2i-1} \right)  \leq - {B}_{\varepsilon} \frac{M}{\sqrt{h}}\right] \leq 2^{-(1+\varepsilon)n}.
	\end{equation}
	By (\ref{eq:XM_lower}) and (\ref{eq:XM_upper}), for a fixed $S \subseteq V$ we get
	\begin{equation} \label{eq:XM_final}
		\Pr\left[\left|X_M - \left(X_{t_0} + \sum_{i=t_0+1}^{M} {\delta}_i \frac{Z_{i-1}+1}{2i-1} \right)\right| \geq {B}_{\varepsilon} \frac{M}{\sqrt{h}}
		\right] \leq 2 \cdot 2^{-(1+\varepsilon) n}.
	\end{equation}
	Now, for $S \subseteq V$, let the event $\mathcal{E}_S$ be defined as
	\[
	\mathcal{E}_S = \left\{\left|e(S) - \left(e(S_{t_0/h}) + \sum_{i=t_0+1}^{M} {\delta}_i \frac{Z_{i-1}+1}{2i-1} \right) \right| \leq {B}_{\varepsilon} \frac{M}{\sqrt{h}} \right\}.
	\]
	By (\ref{eq:XM_final}) and the union bound we get (recall that $X_M = e(S)$ and $X_{t_0} = e(S_{t_0/h})$)
	\[
	\begin{split}
		\Pr\left[\forall S \subseteq V \, \mathcal{E}_S\right] & = 1 - \Pr\left[\exists S \subseteq V \, \mathcal{E}_S^C\right] \geq  1 - \sum_{S \subseteq V} \Pr\left[\mathcal{E}_S^C\right] \\
		& \geq 1 - 2^n \cdot 2 \cdot 2^{-(1+ \varepsilon)n} = 1 -  2 \cdot 2^{-\varepsilon n} = 1-o(1).
		%		\Pr\left[ \forall S \subseteq V \,
		%		\left|X_M - \left(X_{t_0} + \sum_{i=t_0+1}^{M} \tilde{\delta}_i \frac{Z_{i-1}+1}{2i-1} \right)\right| \leq \tilde{B}_{\varepsilon} \sqrt{|\tilde{S}|} \sqrt{M/h}
		%		\right] \geq 1 - 2^{-\varepsilon n}.
	\end{split}
	\]
\end{proof}

\end{document}